\crefname{hypothesis}{Hypothesis}{Hypotheses}
\title{Hypocoercivity properties of adaptive Langevin dynamics\thanks{Submitted to the editors DATE.
}}
\author{
 Benedict Leimkuhler\thanks{School of Mathematics, University of Edinburgh, UK
  (\email{b.leimkuhler@ed.ac.uk}, \url{http://kac.maths.ed.ac.uk/\string~bl/}).}
\and Matthias Sachs \thanks{Department of Mathematics, Duke University, USA
  (\email{msachs@math.duke.edu}, \url{https://math.duke.edu/people/matthias-ernst-sachs}).}
\and  Gabriel Stoltz\thanks{Universit\'e Paris-Est, CERMICS (ENPC),
  Inria, F-77455 Marne-la-Vall\'ee, France
(\email{gabriel.stoltz@enpc.fr}, \url{http://cermics.enpc.fr/\~stoltz})}
}
\def\dd{{\rm d}}
\def\ndatatest{\hat{N}}
\def\ndata{\widetilde{N}}
\def\hermite{h}
\def\Lfd{\mathcal{G}_{\varepsilon} } 
\def\AA{A_{\varepsilon}}
\def\Ace{\mathscr{A}_{\varepsilon}}
\def\Ladl{\Lc_{\rm AdL}}
\def\Lo{\Lc_{\rm O}}
\def\Lnh{\Lc_{\rm NH}}
\def\Lh{\Lc_{\rm H}}
\def\La{\Lc_{\rm A}}
\def\Lb{\Lc_{\rm B}}
\def\Ld{\Lc_{\rm D}}
\def\Ac{\mathcal{A}}
\def\qpoincare{\kappa_{q}}
\def\aa{{a_{\varepsilon,\gamma}}}
\def\const{{C}}
\def\NN{{\mathbb{N}}}
\def\trans{{T}}
\def\xDomain{{\RR^{2n+1}}}
\def\EE{{ \mathbb{E}}}
\def\RR{{\mathbb R}}
\def\FF{{\bf F}}
\def\deltat{{\Delta t }}
\DeclareFontFamily{OMX}{MnSymbolE}{}
\DeclareSymbolFont{MnLargeSymbols}{OMX}{MnSymbolE}{m}{n}
\DeclareFontShape{OMX}{MnSymbolE}{m}{n}{
    <-6>  MnSymbolE5
   <6-7>  MnSymbolE6
   <7-8>  MnSymbolE7
   <8-9>  MnSymbolE8
   <9-10> MnSymbolE9
  <10-12> MnSymbolE10
  <12->   MnSymbolE12
}{}
\DeclareFontShape{OMX}{MnSymbolE}{b}{n}{
    <-6>  MnSymbolE-Bold5
   <6-7>  MnSymbolE-Bold6
   <7-8>  MnSymbolE-Bold7
   <8-9>  MnSymbolE-Bold8
   <9-10> MnSymbolE-Bold9
  <10-12> MnSymbolE-Bold10
  <12->   MnSymbolE-Bold12
}{}
\let\llangle\@undefined
\let\rrangle\@undefined
\DeclareMathDelimiter{\llangle}{\mathopen}%
                     {MnLargeSymbols}{'164}{MnLargeSymbols}{'164}
\DeclareMathDelimiter{\rrangle}{\mathclose}%
                     {MnLargeSymbols}{'171}{MnLargeSymbols}{'171}
\def\({\left (}
\def\){\right )}
\DeclarePairedDelimiter{\abs}{\lvert}{\rvert}
\DeclarePairedDelimiter{\norm}{\lVert}{\rVert}
\DeclarePairedDelimiter{\innerLpi}{\langle}{\rangle_{L^{2}(\pi)}}
\DeclarePairedDelimiter{\innera}{\llangle}{\rrangle_{\varepsilon,\gamma}}
\NewDocumentCommand{\normLinfty}{ s O{} m }{%
  \IfBooleanTF{#1}{\norm*{#3}}{\norm[#2]{#3}}_{L_\infty}%
}
\NewDocumentCommand{\normL}{ s O{} m }{%
  \IfBooleanTF{#1}{\norm*{#3}}{\norm[#2]{#3}}_{L^{2}(\mu)}%
}
\NewDocumentCommand{\normLmu}{ s O{} m }{%
  \IfBooleanTF{#1}{\norm*{#3}}{\norm[#2]{#3}}_{L^{2}(\mu)}%
}
\NewDocumentCommand{\normLpi}{ s O{} m }{%
  \IfBooleanTF{#1}{\norm*{#3}}{\norm[#2]{#3}}_{L^{2}(\pi)}%
}
\NewDocumentCommand{\normLtwo}{ s O{} m }{%
  \IfBooleanTF{#1}{\norm*{#3}}{\norm[#2]{#3}}_{L_2}%
}
\NewDocumentCommand{\normEuc}{ s O{} m }{%
  \IfBooleanTF{#1}{\norm*{#3}}{\norm[#2]{#3}}_{2}%
}
\NewDocumentCommand{\normFrob}{ s O{} m }{%
  \IfBooleanTF{#1}{\norm*{#3}}{\norm[#2]{#3}}_{{\rm F}}%
}
\newcommand\given[1][]{\:#1\vert\:}
\def\W{{\bf W}}
\def\I{{\bf I}}
\def\dd{{\rm d}}
\def\q{{ \bf q}}
\def\M{{ \bf M}}
\def\p{{ \bf p}}
\def\0{{ \bf 0}}
\def\Lc{\mathcal{L}}
\def\Ac{\mathcal{A}}
\newtheorem{assumption}{Assumption}
\crefname{assumption}{Assumption}{Assumptions}
\newtheorem{collorary}{Collorary}
\crefname{collorary}{Collorary}{Colloraries}
\renewcommand{\leq}{\leqslant}
\renewcommand{\geq}{\geqslant}
\newcommand{\Llang}{\mathcal{L}_{\rm Lang}}
\newcommand{\ev}{\eta_\varepsilon}
\begin{document}

\maketitle

\hspace{1cm}

\emph{After publication, an error in Lemma~\ref{lem:bound_AA_Ace} came to light, which, when corrected, results in a different scaling of the lower bound~\eqref{eq:spectral:gap} on the exponential convergence rate in Theorem~\ref{thm:main}, the factor~$\gamma$ in the minimum over four terms being changed to~$\gamma/\varepsilon^2$. The convergence result, its corollaries, and the various estimates in the proof of these results have been updated accordingly in this corrected version. In particular, the results now agree with the ones recently obtained by Lo\"is Delande in~\cite{delande}, who pointed out the mistake.}

\hspace{1cm}

\begin{abstract}
  Adaptive Langevin dynamics is a method for sampling the Boltzmann--Gibbs distribution at prescribed temperature in cases where the potential gradient is subject to stochastic perturbation of unknown magnitude.   The method replaces the friction in underdamped Langevin dynamics with a dynamical variable, updated according to a negative feedback loop control law as in the Nos\'e--Hoover thermostat. Using a hypocoercivity analysis we show that the law of Adaptive Langevin dynamics converges exponentially rapidly to the stationary distribution, with a rate that can be quantified in terms of the key parameters of the dynamics. This allows us in particular to obtain a central limit theorem with respect to the time averages computed along a stochastic path. Our theoretical findings are illustrated by numerical simulations involving classification of the MNIST data set of handwritten digits using Bayesian logistic regression. 
 \end{abstract}

\begin{keywords}
  Langevin dynamics, hypocoercivity, Bayesian inference, stochastic gradients, Nos\'e-Hoover, sampling
\end{keywords}

\begin{AMS}
60J70, 35B40, 46N30, 35Q84, 65C30
\end{AMS}

\section{Introduction}\label{sec:introduction}
Langevin dynamics~\cite{Pavliotis2014,LeMaBook,Lelievre2016a} is a system of stochastic differential equations which is traditionally derived as a model of a coarse-grained particle system:
\begin{equation}
\begin{aligned}\label{eq:L}
\dd \q &= \M^{-1}\p \, \dd t,\\
\dd \p &= \left ( \FF(\q) -\zeta \M^{-1} \p  \right ) \dd t   + \sigma \, \dd \W.\\
\end{aligned}
\end{equation}
Here $\q \in \RR^{n}$ represents a vector of particle positions, $\p$ is the corresponding vector of momenta, the mass matrix $\M \in \RR^{n \times n}$ is symmetric positive definite, $\FF$ is the force field (normally the negative gradient of a potential energy function $U$), $\zeta\in \RR$ is a (constant) friction coefficient, and $\sigma\in \RR$ represents the strength of coupling to the stochastic driving force defined by the Wiener increment $\dd \W$.
Although conceived as a dynamical model, Langevin dynamics is among the most versatile and popular methods for computing the statistical properties in high dimension, e.g. for molecular systems or, more recently, for many problems in high-dimensional data analysis. In this approach, the dynamical properties are ignored and the stochastic differential equations are discretized to produce ``sampling paths'' with weights approximating those associated to the (prescribed) Boltzmann-Gibbs stationary distribution with density $\rho_{\beta} \propto {\rm e}^{-\beta U}$, where, in physical settings, $\beta$ is the reciprocal of the temperature scaled by Boltzmann's constant. 

The key benefit of Langevin dynamics for sampling, compared to simpler methods such as random walk Monte Carlo, is the use it makes of the gradient of the energy function (or, in the case of data analysis, the ``log posterior''; see Section \ref{sec:data} for an example of Bayesian data analysis) which can effectively guide the collection of sampling paths, resulting in less wasted computation.  The use of Langevin dynamics as a sampling scheme is further supported by its well-understood ergodic properties (see \cite{Mattingly2002,Talay2002,cances2007theoretical,sachs2017langevin} and references therein), which ensure exponential convergence of averages to their stationary values, a property which under certain technical conditions on the potential function $U$ can be shown to carry to numerical discretization \cite{Mattingly2002,Talay2002,LeMaSt2015,bou2010long,KopecLangevin}.  

Despite these advantages of Langevin dynamics, in many applications (e.g. mixed quantum and classical molecular dynamics \cite{varnai2013tests,LeMaBook} or ``big data''  \cite{cheng2017underdamped}) the computation of the force is itself a very challenging task, thus the gradient may be effectively corrupted (due to approximation error) which leads to severe biasing of the invariant distribution. It was for precisely such cases that the Adaptive Langevin dynamics method \cite{Jones2011a,Ding2014,Shang2015,Leimkuhler2015} was created.   In this method, the friction $\zeta$ in (\ref{eq:L}) is reinterpreted as a dynamical variable, defined by a negative feedback loop control law (as in the Nos\'e-Hoover method \cite{nose1984unified}).   For concreteness, we suppose the gradient noise to be modelled by an additional stochastic process.  As discussed in \cite{Shang2015}, this can, in many cases, be interpreted as an additional (unknown) It\^{o} perturbation $\sigma_{\rm G} \, \dd \W_{\rm G}$, where $\sigma_{\rm G}^{2}$ is unknown and scales linearly with the stepsize used in the discretization of the respective continuous formulation.  The system of equations now becomes\footnote{The formulation in \cite{Shang2015} is slightly different in the form of the control law as a consequence of a linear transformation of the momenta in the presentation of the frictional force.}
\begin{equation}
\begin{aligned}\label{eq:adL:1}
\dd \q &= \M^{-1}\p \, \dd t,\\
\dd \p &= \left( -\nabla U(\q) -\zeta \M^{-1} \p  \right ) \dd t  + \sigma_{\rm G} \, \dd \W_{\rm G} + \sigma_{\rm A} \, \dd \W_{\rm A},\\
\dd \zeta &= \frac{1}{\nu}\left ( \p^{\trans}\M^{-2}\p - \frac1\beta \mathrm{Tr}\left(\M^{-1}\right) \right ) \dd t,
\end{aligned}
\end{equation}
\noindent where  $\beta$, $\sigma_{\rm G}$, $\sigma_{\rm A}$, and $\nu$ are positive scalars,
and $\W_{\rm A},\W_{\rm G}$ are two independent Wiener processess in $\RR^{n}$ with independent components (``A'' stands for ``applied'', ``G'' for ``gradient''). The auxiliary variable $\zeta$ now acts as a variable friction which restores the canonical distribution associated with the prescribed inverse temperature~$\beta$. The system (\ref{eq:adL:1}) admits the invariant probability measure (see Section~\ref{sec:main})
\begin{equation}
  \label{eq:rho_unnormalized}
  \pi(\dd\q \, \dd\p \, \dd\zeta) =Z^{-1} \exp\left(-\beta \left [\frac{\p^T\M^{-1}\p}{2} + U(\q) + \frac{\nu}{2} (\zeta-\gamma)^2\right]\right) \, \dd\q \, \dd\p \, \dd\zeta,
\end{equation}
where $Z$ is a normalization constant and 
\begin{equation}
\gamma = \frac{\beta(\sigma_{\rm G}^{2}+\sigma_{\rm A}^{2})}{2}.
\end{equation}
Assuming ergodicity, the system (\ref{eq:adL:1}) allows sampling of the Gibbs-Boltzmann probability measure with density proportional to ${\rm e}^{-\beta \left [\p^T\M^{-1}\p/2 + U(\q)\right ] }$, by marginalization, and proportional to $\rho_{\beta}$ if the momenta are ignored. 

The practical value of \cref{eq:adL:1} is that it allows simulations to be performed for complicated systems in which the potential energy function $U$ and its gradient are the consequence of substantial calculations and thus entail computational errors.  The original motivation of the article of Jones and Leimkuhler \cite{Jones2011a} was in the context of multiscale models of molecular systems where the force laws were computed using a separate numerical method and the error in this process assumed to have the character of white noise.  More recently, \cref{eq:adL:1} has been adopted in the setting of sampling of Bayesian posterior distributions in large scale data science applications \cite{Chen2014}, where the gradient noise is the consequence of incomplete calculation of the log-likelihood function based on subsampling data points from a large data set, as in the stochastic gradient Langevin dynamics method~\cite{WellingTeh2011}. In this setup the potential function $U$ corresponds to the negative log posterior density of a statistical model, i.e., for independent observations $x^{1},x^{2},\dots,x^{\ndata}$, the negative gradient of $U$ is of the form
\begin{align}\label{eq:posterior:1}
-\nabla U(\q) =  \nabla \log  p_0(\q)  + \sum_{j=1}^{\ndata} \nabla \log p(x^{j} \given \q) 
\end{align}
where $p_{0}$ is a prior density and $p( x^{j} \given \q)$ is the likelihood of the $j$-th observation. In order to avoid the linear scaling in $\ndata$ of the computational cost per evaluation of the force \eqref{eq:posterior:1}, 
the gradient force $-\nabla U(\q)$ is commonly replaced by an unbiased estimator $-\widehat{\nabla} U(\q)$ in discretizations of \eqref{eq:adL:1}. That is,
\begin{equation}\label{eq:unbiased:estimator}
-\widehat{\nabla} U(\q) = \nabla \log  \pi(\q ) +  \frac{\ndata}{m}\sum_{j \in B} \nabla \log p(x^{j} \given \q ), 
\end{equation}
where $B = \{ J_{l}\}_{l=1}^{m}, \,m\ll \ndata$ is a subset of the complete data index set --commonly referred to as a minibatch--  which is comprised of uniformly and independently sampled data point indices $J_{l} \in \{1,\dots, \ndata\}, l=1,\dots,m$, which are resampled with replacement at the beginning of every time step of a discretization of \eqref{eq:adL:1}. 

Although the presence of noise in the Adaptive Langevin model in contact with all momenta suggests hypoellipticity (as for Langevin dynamics \cite{Mattingly2002}), the way in which convergence is achieved in the Adaptive Langevin system is not straightforward.   Given a stochastic differential equation system with generator $\Lc$, let us recall that there are several well studied frameworks which can be used to derive exponential convergence rates for the semi-group ${\rm e}^{t\Lc}$ (or equivalently for the respective adjoint semi-group) in certain functional spaces.  

First, there are probabilistic techniques, which allow the derivation of exponential convergence rates of ${\rm e}^{t\Lc}$  when considered as a family of operators on weighted $L^{\infty}$ spaces (see e.g. \cite{meyn1993stability,Meyn1997,Mattingly2002}), or exponential convergence rates of the formally adjoint semi-group acting on Wasserstein metric spaces (see e.g. \cite{eberle2011reflection,eberle2019}).

Second, there also exist functional analytic proofs for exponential convergence for the case of weighted $L^{\infty}$ spaces; see \cite{Rey-Bellet2006a,Hairer2011a}.  The naive application of these methods fails in the case of \cref{eq:adL:1} due to a lack of direct stochastic control of the auxiliary  variable $\zeta$. It was only very recently shown in \cite{herzog2018exponential}, that a suitable Lyapunov function can be constructed for this system which allows to conclude exponential convergence in a  weighted $L^{\infty}$ space. 

The approach taken here is based on a third method, the alternative hypocoercivity framework of Villani \cite{Villani2009}, as further developed by Dolbeault, Mouhot, and Schmeiser \cite{DMS09,Dolbeault2015}, which can be used to derive exponential convergence rates of the semi-group when considered as a family of operators acting on subspaces of $L^{2}(\mu)$, 
 where $\mu$ 
denotes the (unique) invariant measure of the stochastic process under consideration. This technique can be applied to derive geometric convergence estimates for the underdamped Langevin equation \cite{Dolbeault2015,roussel2017spectral,iacobucci2017convergence}.   We show that this framework can also be applied directly to the system (\ref{eq:adL:1}), thus demonstrating the rapid convergence in law of the Adaptive Langevin system.  

The exponential convergence shown here has important consequences for the statistics of the samples obtained using the Adaptive Langevin method.  In particular it allows to establish a central limit theorem. Our approach also allows us to characterize the asymptotic scaling of the spectral gap of the generator associated with \eqref{eq:adL:1} when considered as an operator on the respective weighted $L^{2}$ space as ${\rm O}(\min(\gamma\nu^{-1},\gamma^{-1},\gamma\nu,\gamma^{-1}\nu^{-1}))$; a qualitative characterization of the spectral gap which is missing in the analysis in \cite{herzog2018exponential}. The scaling is confirmed in~\cite{delande} using techniques from semi-classical analysis, with additional information on leading eigenvectors and eigenfunctions of the generator of the dynamics in the small temperature regime. The derived asymptotic scaling on the lower bounds of the spectral gap allows in turn to conclude an asymptotic scaling of the asymptotic variance in the above mentioned central limit theorem as ${\rm O}(\max(\gamma,\gamma^{-1}\nu,\gamma\nu,\gamma^{-1}\nu^{-1}))$; see the discussion in Remark~\ref{rem:scaling} for an informal motivation of some terms in this asymptotic scaling. 

The remainder of this paper is structured as follows.   In 
Section \ref{sec:main} we begin by rewriting the generators of the dynamics~\eqref{eq:adL:1}, where we also check the invariance of the probability measure~\eqref{eq:rho_unnormalized}. In Subsection~\ref{sec:normalization} we normalize the dynamics~\eqref{eq:adL:1} in order to study limiting regimes associated with vanishing or diverging key parameters of the dynamics (namely the thermal mass~$\nu$ and the magnitude of the fluctuation). We can then discuss requirements of the potential energy function (Subsection~\ref{sec:assumptions}), and state the exponential convergence of the evolution semigroup in Subsection~\ref{sec:exp:conv}. The central limit theorem (CLT) is derived in Section~\ref{sec:CLT}, with upper bounds on the asymptotic variance made precise in terms of the key parameters of the dynamics. Finally, we show in Subsection~\ref{sec:Langevin_limit} that the asymptotic variance converges in the large thermal mass limit to the asymptotic variance of standard Langevin dynamics.  Section \ref{sec:numerical} contains numerical experiments assessing the relevance of parameter scalings used and demonstrating the CLT in an application to Bayesian sampling.

\section{Hypocoercivity of Adaptive Langevin dynamics}\label{sec:main}

We assume that the potential energy function~$U$ is smooth and such that $\mathrm{e}^{-\beta U(\q)}$ is integrable. In particular, \eqref{eq:rho_unnormalized} is a well defined probability measure. We first show that the probability measure~\eqref{eq:rho_unnormalized} is indeed invariant under the dynamics~\eqref{eq:adL:1}. 

The generator of~\eqref{eq:adL:1} acts on functions $\varphi = \varphi(\q,\p,\zeta)$ with $(\q,\p,\zeta) \in \mathbb{R}^{2n+1}$. It can be written as $\Ladl = \Lh + \gamma\Lo + \nu^{-1} \Lnh$ with
\begin{equation}
  \label{eq:def_Lh_Lo}
  \begin{aligned}
    \Lh & = \p^T \M^{-1}\nabla_q - \nabla U(\q)^T \nabla_p = \frac1\beta \left(\nabla_p^* \nabla_q-\nabla_q^*\nabla_p\right) = \frac1\beta \sum_{i=1}^n \partial_{p_i}^*\partial_{q_i}-\partial_{q_i}^*\partial_{p_i},\\
    \Lo & = -\p^T \M^{-1}\nabla_p + \frac1\beta \Delta_p = -\frac1\beta \nabla_p^* \nabla_p = -\frac1\beta \sum_{i=1}^n \partial_{p_i}^* \partial_{p_i}, 
\end{aligned}
\end{equation}
and
\begin{equation}
  \label{eq:def_Lnh}
  \begin{aligned}
    \Lnh & = -\nu(\zeta-\gamma)\p^T \M^{-1} \nabla_p + \left(\p^T \M^{-2} \p - \frac1\beta \mathrm{Tr}\left(\M^{-1}\right)\right) \partial_\zeta\\
    & = \frac{1}{\beta^2} \left((\partial_\zeta-\partial_\zeta^*)\nabla_p^*\nabla_p + \Delta_p^*\partial_\zeta - \Delta_p\partial_\zeta^* \right),
    \end{aligned}
  \end{equation}
  where adjoints are taken on $L^2(\pi)$. A simple computation indeed shows that $\partial_{q_i}^* = -\partial_{q_i} + \beta \partial_{q_i} U(q)$, $\partial_{p_i}^* = -\partial_{p_i} + \beta (\M^{-1} \p)_i$, $\partial_\zeta^* = -\partial_\zeta + \beta \nu (\zeta-\gamma)$ and
\[
\Delta_p^* = \Delta_p - 2\beta \p^T \M^{-1}\nabla_p + \beta^2 \left(\p^T \M^{-2} \p - \frac1\beta \mathrm{Tr}\left(\M^{-1}\right)\right).
\]
The above rewriting in terms of the elementary operators $\partial_{q_i},\partial_{p_i},\partial_\zeta$ and their adjoints immediately shows that $\Lo$ is symmetric, while $\Lh$ and $\Lnh$ are antisymmetric. Let us however emphasize that this decomposition is only used for mathematical convenience: the parameter~$\gamma$ is in fact unknown since $\sigma_G$ is not known in practice.

Another benefit of the rewriting~\eqref{eq:def_Lh_Lo}-\eqref{eq:def_Lnh} is that the actions of the operators $\Lh, \Lo, \Lnh$  make it clear that the measure with density~\eqref{eq:rho_unnormalized} is indeed invariant since $\Ac \mathbf{1} = 0$ for $\Ac \in \{ \Lh, \Lo, \Lnh\}$, so that (denoting by $C_0^{\infty}(\xDomain,\RR)$ the space of $C^\infty$ functions with compact support in~$\xDomain$)
\[
\forall \varphi \in C_0^{\infty}(\xDomain,\RR), \qquad \int_{\xDomain} \Ac \varphi\,\dd \pi = \sigma \int_{\xDomain} \varphi\, \Ac \mathbf{1} \, \dd \pi =  0,  
\]
with $\sigma = 1$ for $\Ac = \Lo$ and $\sigma = -1$ for $\Ac \in \{\Lh,\Lnh\}$, and where we relied for $\Lnh$ on the fact that elementary operators acting on different variables commute. Therefore,
\[
\forall \varphi \in C_0^{\infty}(\xDomain,\RR), \qquad \int_{\xDomain} \Ladl \varphi\,\dd \pi = 0, 
\]
which proves the invariance of~$\pi$ under the dynamics~\eqref{eq:adL:1} (see for instance~\cite{Lelievre2016a}).

\subsection{Normalization of the dynamics}
\label{sec:normalization}
To simplify the notation we let $\M = \I$. Let us however emphasize that our proofs and results can  be adapted in a straightforward way to accomodate general mass matrices. As one of our interests in this work is to understand the limiting regimes $\gamma \to 0$ or~$+\infty$ and/or $\nu \to 0$ or~$+\infty$ of the Adaptive Langevin dynamics, we also need to rescale the friction variable~$\zeta$ in order for the invariant measure to be independent of the parameter~$\nu$. More precisely, we set $\varepsilon = \sqrt{\nu}$ and consider $\xi = \sqrt{\nu}(\zeta-\gamma)$, \textit{i.e.}
\[
\zeta = \gamma + \frac{\xi}{\varepsilon}.
\]
The latter change of variables is motivated by the fact that the invariant measure~\eqref{eq:rho_unnormalized} now becomes (slightly abusing the notation~$\pi$)
\begin{equation}
  \label{eq:rho_normalized}
  \pi(\dd\q \, \dd\p \, \dd\xi) = Z^{-1} \exp\left(-\beta \left [\frac{\p^T\M^{-1}\p}{2} + U(\q) + \frac{\xi^2}{2} \right]\right) \, \dd\q \, \dd\p \, \dd\xi.
\end{equation}
Let us emphasize that this invariant probability measure does not depend on the parameters~$\gamma,\varepsilon$. The dynamics~\eqref{eq:adL:1} then becomes 
\begin{equation}
  \label{eq:adL}
  \begin{aligned}
    \dd \q &= \p \, \dd t,\\
    \dd \p &= \left( -\nabla U(\q) - \frac{\xi}{\varepsilon} \p   -\gamma \p \right ) \dd t  + \sqrt{\frac{2 \gamma}{\beta}} \dd \W,\\
    \dd \xi &= \frac{1}{\varepsilon}\left ( |\p|^2 - \frac{n}{\beta} \right ) \dd t,
  \end{aligned}
\end{equation}
where $|\p| = \sqrt{p_1^2+\dots+p_n^2}$ is the Euclidean norm of~$\p \in \mathbb{R}^n$. The generator of this SDE is
\begin{equation}
  \label{eq:generator:adL}
  \Ladl=  \Lh + \gamma\Lo + \varepsilon^{-1} \Lnh,
\end{equation}
with the above definitions~\eqref{eq:def_Lh_Lo} for $\Lh$ and $\Lo$ (upon replacing $\M$ with $\I$) and
\begin{equation}
  \label{eq:def_Lnh_rescaled}
\Lnh = \left (|\p|^{2} - \frac{n}{\beta} \right ) \partial_{\xi} - \xi\, \p^T \nabla_{p} = \frac{1}{\beta^2} \left((\partial_\xi-\partial_\xi^*)\nabla_p^*\nabla_p + \Delta_p^*\partial_\xi - \Delta_p\partial_\xi^* \right).
\end{equation}

\subsection{Assumptions and notation}
\label{sec:assumptions}

We denote by $\pi_{q}, \pi_{p}, \pi_{\xi}$ the marginals of the probability measure~\eqref{eq:rho_normalized} in the variables $\q,\p$, and $\xi$, respectively, so that $\pi(\dd \q\,\dd\p\,\dd \xi)= \pi_{q}(\dd \q) \pi_{p}(\dd \p) \pi_{\xi}(\dd \xi)$.
Further let $\norm{\,\cdot\,}_{L^{2}(\pi)}$ be the norm on the Hilbert space~$L^2(\pi)$ induced by the canonical scalar product, and denote by $L^{2}_{0}(\pi)$ the subspace of $L^2(\pi)$ of functions with vanishing mean:
\begin{equation}
L^{2}_{0}(\pi)  = \left \{ \varphi \in L^{2}(\pi) \, \left| \int_{\xDomain} \varphi \,\dd\pi =0 \right. \right \},
\end{equation}
and by $\Pi_{0} : L^{2}(\pi) \rightarrow L_{0}^{2}(\pi)$ the orthogonal projection operator onto this subspace, i.e.,
\begin{equation}
  \label{eq:def:projZero}
  \Pi_{0}\varphi = \varphi  - \int_{\xDomain} \varphi \,\dd \pi.
\end{equation}
In the remainder of this article we consider all operators as being defined on $L^{2}(\pi)$ unless explicitly specified otherwise. The associated operator norm for bounded operators on $L^2(\pi)$ is 
\[
\norm{\mathcal{T}} = \sup_{\varphi \in L^{2}(\pi) \backslash \{0\}}\frac{\norm{\mathcal{T} \varphi}_{L^2(\pi)}}{\norm{\varphi}_{L^2(\pi)}}.
\]
For an operator $\mathcal{T}$ on $L^{2}(\pi)$ with dense domain, we denote by $\mathcal{T}^{*}$ its $L^{2}(\pi)$-adjoint. Throughout the remainder of this article we assume that the potential function $U$ satisfies the following assumption. 

\begin{assumption}\label{as:poincare:U}
  The potential function $U$ is smooth, and the associated probability measure $\pi_{q}(\dd \q) = Z_q^{-1} {\rm e}^{-\beta U(\q)} \dd \q$ satisfies a Poincar\'e inequality: there exists $\qpoincare>0$ such that
\begin{equation}
\forall \varphi \in H^{1}(\pi_{q}), \qquad \norm*{\varphi - \int_{\mathbb{R}^n} \varphi\, \dd \pi_{q} }_{L^{2}(\pi_{q}) }  \leq \frac{1}{\qpoincare} \norm{\nabla \varphi}_{L^{2}(\pi_{q})}. 
\end{equation}
Moreover, there exist $c_1 > 0$, $c_2 \in [0,1)$ and $c_3 > 0$ such that
    \begin{equation}
      \label{eq:regularization condition}
      \Delta U \leq c_1 + \frac {c_2} 2 | \nabla U |^2, \quad |\nabla^2 U | \leq c_3 \left( 1 + | \nabla U | \right).
    \end{equation}
  \end{assumption}

The second condition, taken from~\cite[Section~3]{Dolbeault2015}, ensures that the operator $(1+\nabla_q^*\nabla_q)^{-1}$ is bounded from $L^2(\pi_q)$ to~$H^2(\pi_q)$. It will be used in technical estimates related to the proof of exponential convergence of the semigroup (see Lemma~\ref{lem:list_bounded_op}). 

A sufficient condition on~$U$ for~$\pi_q$ to satisfy a Poincar\'e inequality is for example the following (see \cite[Corollary 1.6]{bakry2008simple}): there exists $a \in (0,1)$, $c>0$ and $R \geq 0$ such that
\[
\forall q \in \mathbb{R}^n \text{ such that }|q|\geq R, \qquad a \beta |\nabla U(q)|^2 - \Delta U(q) \geq c.
\]
The latter condition and~\eqref{eq:regularization condition} hold for instance for potentials which behave asymptotically as $|q|^\alpha$ with $\alpha > 1$ as $\abs{q} \rightarrow \infty$.

\subsection{Exponential convergence of the law and invertibility of the generator}
\label{sec:exp:conv}

The following result states the exponential convergence in $L^{2}(\pi)$ of the semigroup~${\rm e}^{t\Ladl}$ associated with the dynamics~\eqref{eq:adL}.

\begin{theorem}
  \label{thm:main}
There exist $ \const,\overline{\lambda}$ such that, for any $\varepsilon,\gamma>0$, there is $\lambda_{\varepsilon,\gamma}>0$ for which
\begin{equation}\label{eq:L2:exp:conv}
\forall t \geq 0, \quad \forall \varphi \in L^{2}(\pi), \qquad \normLpi*{{\rm e}^{t\Ladl}\varphi - \int \varphi \,\dd \pi} \leq \const {\rm e}^{-\lambda_{\varepsilon,\gamma}} \normLpi*{ \varphi - \int \varphi \,\dd \pi},
\end{equation}
with the lower bound
\begin{equation}
  \label{eq:spectral:gap}
  \lambda_{\varepsilon,\gamma} \geq \overline{\lambda}\min\left(\frac1\gamma,\frac{1}{\gamma\varepsilon^2},\gamma\varepsilon^2,\frac{\gamma}{\varepsilon^2}\right).
\end{equation}
\end{theorem}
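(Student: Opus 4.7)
The plan is to apply the modified-entropy approach of Dolbeault--Mouhot--Schmeiser, extended to handle the fact that the antisymmetric part of $\Ladl$ now has two components, $\Lh$ and $\varepsilon^{-1}\Lnh$, both of which must transport dissipation from the momentum variable (where it is injected by $\gamma \Lo$) into the two other variables $\q$ and $\xi$. Concretely, I would introduce the orthogonal projection $P$ of $L^2(\pi)$ onto $\ker(\Lo)$, namely $(P\varphi)(\q,\xi) = \int \varphi(\q,\p,\xi)\,\pi_p(\dd\p)$, and define two auxiliary operators
\[
A_H = \bigl(1+(\Lh P)^*(\Lh P)\bigr)^{-1}(\Lh P)^*, \qquad A_{NH} = \bigl(1+(\Lnh P)^*(\Lnh P)\bigr)^{-1}(\Lnh P)^*,
\]
each bounded on $L^2(\pi)$ with operator norm at most $1/2$. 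I would then work with the modified entropy
\[
\Hc_{\varepsilon,\gamma}[\varphi] = \tfrac12 \normLpi{\Pi_0 \varphi}^2 + \delta_1 \langle A_H \Pi_0 \varphi, \Pi_0\varphi\rangle + \delta_2 \langle A_{NH} \Pi_0 \varphi, \Pi_0\varphi\rangle,
\]
with weights $\delta_1,\delta_2$ to be fixed, and check that it is equivalent to $\tfrac12\|\Pi_0\varphi\|_{L^2(\pi)}^2$ for $\delta_1,\delta_2$ small enough.

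Next, I would differentiate $\Hc_{\varepsilon,\gamma}[\mathrm{e}^{t\Ladl}\varphi]$ along the dynamics. The leading negative contributions are of three types: a direct one $-\gamma \beta^{-1}\|\nabla_p \Pi_0 \varphi\|^2$ coming from $\gamma \Lo$; a coercive term of size $\delta_1 \|(\Lh P)(\Lh P)^*(1+\cdots)^{-1} P\varphi\|^2$ providing $\q$-dissipation; and a coercive term of size $\delta_2 \varepsilon^{-1} \|(\Lnh P)(\Lnh P)^*(1+\cdots)^{-1} P\varphi\|^2$ providing $\xi$-dissipation. Using Assumption~\ref{as:poincare:U} and the Gaussian Poincar\'e inequality satisfied by $\pi_\xi$, these two terms dominate $\delta_1 \kappa_q^2 \|P_{q}^{\perp} P \varphi\|^2$ and $\delta_2 \varepsilon^{-1} \|P_{\xi}^{\perp} P \varphi\|^2$ respectively (up to bounded operator factors whose boundedness rests on~\eqref{eq:regularization condition}, exactly as in~\cite[Section~3]{Dolbeault2015}).

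The competing error terms include $\delta_i \langle A_i \gamma \Lo \Pi_0\varphi, \Pi_0\varphi\rangle$, the symmetric parts of $\delta_i \langle A_i \Lh \Pi_0\varphi, \Pi_0\varphi\rangle$ and $\delta_i \varepsilon^{-1} \langle A_i \Lnh \Pi_0\varphi, \Pi_0\varphi\rangle$ for $i$ different from the index of the auxiliary operator, and analogous terms from the adjoints. Using that $A_H$, $\Lo A_H$, $A_H \Lh (1-P)$ and their $\Lnh$ and cross analogues are bounded on $L^2(\pi)$ (which can be proved by the same resolvent/commutator bookkeeping as in the Langevin case, with the one genuinely new ingredient being bounds on $A_H \Lnh$ and $A_{NH}\Lh$ that follow from the fact that $\Lnh$ only involves $\p$-derivatives and $\partial_\xi$ acting on products), one obtains Cauchy--Schwarz bounds of the form $C(\gamma \delta_i + \delta_i + \varepsilon^{-1}\delta_i)$ times the sum of the coercive quantities. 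Choosing $\delta_1 = c \min(\gamma,\gamma^{-1})$ and $\delta_2 = c \varepsilon \min(\gamma\varepsilon, (\gamma\varepsilon)^{-1})$ for sufficiently small $c>0$ makes all error terms absorbable, and produces exponential decay at rate proportional to $\min(\gamma,\gamma^{-1},\gamma\varepsilon^2,(\gamma\varepsilon^2)^{-1})$. The conclusion~\eqref{eq:L2:exp:conv} then follows by Gr\"onwall and norm equivalence.

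The main obstacle I anticipate is the careful bookkeeping of the cross-error terms coupling $A_H$ with $\Lnh$ and $A_{NH}$ with $\Lh$: unlike in the Langevin analysis, where one auxiliary operator suffices, here one must verify that neither auxiliary operator introduces a term that is too large in the limiting regimes $\gamma\to 0,\infty$ or $\varepsilon\to 0,\infty$, and that the single choice of $(\delta_1,\delta_2)$ yields the announced four-way $\min$ simultaneously. This is what dictates the precise scaling~\eqref{eq:spectral:gap} and is the only place where the Nos\'e--Hoover antisymmetry $\Lnh$ interacts non-trivially with the Hamiltonian antisymmetry $\Lh$.
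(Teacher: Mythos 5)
Your overall strategy (a DMS-type modified entropy, macroscopic coercivity transported from $\p$ into $\q$ and $\xi$, and a Gr\"onwall argument) is the same as the paper's, but the specific construction you propose has a genuine gap in the treatment of the cross terms, and this is precisely the point where the paper's proof differs from yours. You build two separate auxiliary operators, $A_H$ from $\Lh\Pi$ and $A_{NH}$ from $\Lnh\Pi$. Since $A_H = (1+\beta^{-1}\Pi\nabla_q^*\nabla_q\Pi)^{-1}(\Lh\Pi)^*$ regularizes only in the $\q$ variable and commutes with $\partial_\xi$, the operator $\Lnh A_H$ (which appears in $\tfrac{d}{dt}\langle A_H\varphi_t,\varphi_t\rangle$ through $\varepsilon^{-1}\langle A_H\varphi,\Lnh\varphi\rangle$) acts on a function $u=A_H\varphi$ of $(\q,\xi)$ as $(|\p|^2-n/\beta)\,\partial_\xi u$ with $\partial_\xi A_H\varphi = A_H\partial_\xi\varphi$; it is therefore \emph{not} a bounded operator on $L^2(\pi)$, and the available dissipation (which controls only $\nabla_p\varphi$, $(1-P_q)\Pi\varphi$ and $(1-P_\xi)\Pi\varphi$) cannot absorb it. The symmetric problem occurs for $A_{NH}$, which regularizes only in $\xi$, so $\Lh A_{NH}$ and $A_{NH}\Lh(1-\Pi)$ involve uncontrolled $\nabla_q$ derivatives. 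Your claim that bounds on $A_H\Lnh$ and $A_{NH}\Lh$ ``follow from the fact that $\Lnh$ only involves $\p$-derivatives and $\partial_\xi$'' is exactly backwards: the presence of $\partial_\xi$ (resp.\ $\nabla_q$) is what makes these operators unbounded, since the resolvent in $A_H$ (resp.\ $A_{NH}$) gives back no regularity in that variable. This is why the paper uses a \emph{single} operator $\AA$ built from the full antisymmetric part $\Ace=\Lh+\varepsilon^{-1}\Lnh$: then $\Pi\Ace^2\Pi = \beta^{-1}\nabla_q^*\nabla_q + 2n(\beta\varepsilon)^{-2}\partial_\xi^*\partial_\xi$ is jointly elliptic in $(\q,\xi)$, the analogue of your cross terms appears only through $\AA\Ace(1-\Pi)$, which is bounded with norm of order $\min(1,\varepsilon^{-1})/\varepsilon$ (Lemma~\ref{lem:bound_AA_Ace}), and in addition the regularization must be modified by the factors $\min(1,\varepsilon^{-1})$, $\min(1,\varepsilon^{-2})$ to keep the partial coercivity~\eqref{eq:coercivity_Pi} of the right size as $\varepsilon\to\infty$.

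A second, related issue is that your rate is asserted rather than derived. With your weights $\delta_1=c\min(\gamma,\gamma^{-1})$, $\delta_2=c\,\varepsilon\min(\gamma\varepsilon,(\gamma\varepsilon)^{-1})$ and error terms of the schematic size $C(\gamma\delta_i+\delta_i+\varepsilon^{-1}\delta_i)$, nothing in the sketch produces the terms $\gamma\varepsilon^2$ and $(\gamma\varepsilon^2)^{-1}$ in~\eqref{eq:spectral:gap}; in the paper these arise from a quantitative $2\times 2$ quadratic-form analysis (the matrix ${\bm B}_{\varepsilon,\gamma}$ in~\eqref{eq:neq:XBX}) in which the off-diagonal entry carries the factor $\min(1,\varepsilon^{-1})/\varepsilon$ coming from the Hamiltonian/Nos\'e--Hoover interaction terms $\Lnh P_\xi\Lh$ and $\Lh P_q\Lnh$ (see Remark~\ref{rmk:motivation_extra_term_scaling}). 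Until you either prove suitable bounds on your cross operators (which, as argued above, fail in $L^2(\pi)$) or switch to a construction whose regularizing operator is elliptic in both $\q$ and $\xi$ simultaneously, the Gr\"onwall inequality does not close and the claimed scaling cannot be extracted.
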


\cref{thm:main} immediately implies the existence of the inverse of $\Ladl$ on $L^2_0(\pi)$, and allows to obtain bounds on the norm of the inverse in terms of the parameters~$\gamma,\varepsilon$ (see \cite[Proposition 2.1]{Lelievre2016a}).

\begin{collorary}
  \label{col:invert}
  The operator $\Ladl$ considered on $L^{2}_{0}(\pi)$ is invertible and
  \[
    \Ladl^{-1} = - \int_{0}^{\infty} {\rm e}^{t\Ladl} \dd t,
    \qquad
    \norm*{\Ladl^{-1}}_{\mathcal{B}(L^2_0(\pi))} \leq \frac{ \const}{\overline{\lambda}} \max\left(\gamma,\frac{\varepsilon^2}{\gamma},\gamma\varepsilon^2,\frac{1}{\gamma\varepsilon^2}\right) .
\]
\end{collorary}
Simple computations show that some of these bounds on the resolvent are sharp. Indeed,  
\[
\Ladl \left( \gamma V+\p^T \nabla V \right) = \p^T \left(\nabla^2 V\right) \p - |\nabla V|^2 -\frac1\varepsilon \xi \p^T \nabla V,
\]
which shows that there exists $b>0$ such that $\norm*{\Ladl^{-1}}_{\mathcal{B}(L^2_0(\pi))} \geq b \gamma$ by choosing $\gamma$ large and $\varepsilon = 1$. Moreover,
\[
\Ladl\left( \gamma\varepsilon \xi + \frac{|p|^2}{2} - \frac1\gamma \p^T \nabla V \right) = -\frac{1}{\varepsilon} \xi |p|^2 + \frac{1}{\gamma\varepsilon} \p^T \nabla V - \frac1\gamma \left(\p^T \left(\nabla^2 V\right) \p - |\nabla V|^2\right),
\]
which shows that there exists $c>0$ such that $\norm*{\Ladl^{-1}}_{\mathcal{B}(L^2_0(\pi))} \geq c \gamma\varepsilon^2$ by choosing $\gamma \gg \varepsilon \gg 1$. Finally,
\[
\Ladl \left(U + \frac{|\p|^2}{2}\right) = \gamma \left[ \frac{n}{\beta}- \left(1 + \frac{\xi}{\gamma\varepsilon}\right)|\p|^2\right],
\]
which implies that there exists $a>0$ such that $\norm*{\Ladl^{-1}}_{\mathcal{B}(L^2_0(\pi))} \geq a \gamma^{-1}$ by choosing $\gamma$ small and $\gamma\varepsilon$ large. This is however weaker than the scaling~$\max(\varepsilon^2,\varepsilon^{-2})\gamma^{-1}$. It is in particular not so easy to find functions which saturate the upper bound $1/(\gamma\varepsilon^2)$ of the resolvent since this requires a careful analysis in the regime $\varepsilon \to 0$, which corresponds to a singular limit where the dominant part of the dynamics is the deterministic Nos\'e--Hoover feedback; see Remark~\ref{rmk:vanishing_eps_limit} below. We however demonstrate numerically the sharpness of the upper bound in Section~\ref{sec:spectral_gap_Galerkin}.

\bigskip

The proof of \cref{thm:main} relies on the hypercoercive framework of~\cite{DMS09,Dolbeault2015}.  The exponential decay is obtained by a Gr\"onwall inequality in a modified norm on~$L^2(\pi)$. The choice of the modified norm is motivated by the fact that $\Ladl$ is coercive in the corresponding scalar product. More precisely, we consider 
\begin{equation}
  \label{eq:modified_squared_norm}
  \mathscr{H}(\varphi) = \frac{1}{2}\normLpi{\varphi}^2 + \aa\innerLpi{\AA \varphi, \varphi },
\end{equation}
where $\AA$ is a bounded operator constructed from the antisymmetric part $\Ace:=\Lh + \varepsilon^{-1}\Lnh$ of the generator, and $\aa\in (0,1)$ is a constant. The expression of $\AA$ distinguishes whether $\varepsilon \leq 1$ or $\varepsilon \geq 1$. For $\varepsilon \in (0,1]$, the small term in~$\Ace$ is the Hamiltonian one and the expression of $\AA$ is the one suggested in~\cite{Dolbeault2015}, namely $-\left [1-\Pi \Ace^{2}\Pi\right ]^{-1}\Pi\Ace$ where $\Pi$ is the orthogonal projector on~$L^2(\pi)$ corresponding to the partial integration with respect to~$\pi_p(\dd \p)$:
\begin{equation}
  \label{eq:def_Pi}
  \left ( \Pi \varphi \right )(\q,\xi) = \int_{\mathbb{R}^n} \varphi(\q,\p,\xi) \, \pi_{p}(\dd \p).
\end{equation}
For $\varepsilon \in [1,\infty)$, the small term in $\Ace$ is the one associated with the Nos\'e--Hoover-like feedback mechanism, in which case one should rescale the generator as $\varepsilon \Ladl$ in order to avoid degeneracies as $\varepsilon \to +\infty$. Up to this multiplication by~$\varepsilon$, the regularization operator is defined as above, and therefore reads $-\varepsilon \left [1- \varepsilon^2 \Pi \Ace^{2}\Pi\right ]^{-1}\Pi\Ace$. This modification turns out to be crucial to obtain the key partial coercivity~\eqref{eq:coercivity_Pi} with the appropriate rate (see the discussion following this inequality). We therefore use the following regularization operator, which reduces to the expressions discussed above upon distinguishing $\varepsilon \leq 1$ or $\varepsilon \geq 1$:
\[
  \begin{aligned}
    \AA & := - \min\left(1,\frac1\varepsilon\right)\left [\min\left(1,\frac{1}{\varepsilon^2}\right)-\Pi \Ace^{2}\Pi\right ]^{-1}\Pi\Ace \\
    & = - \min\left(1,\frac1\varepsilon\right)\left [\min\left(1,\frac{1}{\varepsilon^2}\right)+ \Pi \left (\frac{2n}{(\beta \varepsilon)^2} \partial_{\xi}^{*}\partial_{\xi} + \frac1\beta \nabla_{q}^{*}\nabla_{q} \right ) \Pi\right ]^{-1}\Pi\Ace.
  \end{aligned}
\]
The second expression is a consequence of the following equalities:
\[
  \begin{aligned}
  \Pi \Lh^2 \Pi & = -\frac{1}{\beta^2} \sum_{i=1}^n \Pi \partial_{p_i} \partial_{p_i}^* \Pi \partial_{q_i}^* \partial_{q_i} = - \frac1\beta \nabla_{q}^{*}\nabla_{q}, \\
  \Pi \Lnh^2 \Pi & = -\frac{1}{\beta^4} \Pi \Delta_p \Delta_p^* \Pi \partial_\xi^* \partial_\xi= - \frac{2n}{\beta^2} \partial_{\xi}^{*}\partial_{\xi}, \\
  \Pi \Lnh \Lh \Pi & = \Pi \Lh \Lnh \Pi = 0,
\end{aligned}
\]
which are direct consequences of the expressions~\eqref{eq:def_Lh_Lo} and~\eqref{eq:def_Lnh_rescaled} for the generators in terms of the elementary operators $\partial_{q_i},\partial_{p_i},\partial_\xi$, as well as the following rules (which can be checked by direct computations):
\begin{equation}
  \label{eq:rules_derivatives_p_Pi}
  \partial_{p_i} \Pi = 0, \qquad \Pi \partial_{p_i}^* = 0, \qquad \partial_{p_i} \partial_{p_i}^* \Pi = \beta \Pi, \qquad \partial_{p_j}^2 \left(\partial_{p_i}^*\right)^2 \Pi = 2\beta^2 \Pi \delta_{ij}.
  \end{equation}

It can be shown that the norm of $\AA$ is bounded by 1/2 (see Lemma~\ref{lem:DNS:1}), so that $\sqrt{\mathscr{H}(\cdot)}$ defines a norm equivalent to the standard norm on~$L^2(\pi)$ for any $\aa \in (-1,1)$:
\begin{equation}
  \label{eq:equi:norm}
  \sqrt{\frac{1- |\aa|}{2}} \normLpi{\varphi} \leq \sqrt{\mathscr{H}(\varphi)} \leq \sqrt{\frac{1+|\aa|}{2}}  \normLpi{\varphi}.
\end{equation}
By polarization we can define a real valued inner product associated with~$\sqrt{\mathscr{H}(\cdot)}$ as
\begin{equation}
  \label{eq:modified_scalar_product}
\begin{aligned}
  \innera{f,g} &:= \mathscr{H}(f+g)-\mathscr{H}(f)-\mathscr{H}(g) \\
  & = \innerLpi{f,g} + \aa \innerLpi{\AA f,g} + \aa \innerLpi{\AA g,f}.
\end{aligned}
\end{equation}
Most importantly, the construction of the operator $\AA$ ensures that $\Ladl$ is coercive for the modified scalar product~\eqref{eq:modified_scalar_product}, as made precise in the following key result (see Section~\ref{sec:proof_coercivity} for the proof).

\begin{proposition}
  \label{prop:coercivity}
  There exist $\overline{a} \in (0,1)$ and $\widetilde{\lambda}>0$ such that, for any $\varepsilon,\gamma>0$ and upon choosing $\aa = \overline{a}\min(\gamma/\varepsilon,\gamma^{-1},\gamma\varepsilon^2,(\gamma\varepsilon)^{-1})$ 
  in~\eqref{eq:modified_squared_norm},
  \[
    \forall \varphi \in C^\infty_0(\xDomain) \cap L^2_0(\pi), \qquad \innera{-\Ladl \varphi,\varphi} \geq \widetilde{\lambda} \min\left(\frac{\gamma}{\varepsilon^2},\frac1\gamma,\gamma\varepsilon^2,\frac{1}{\gamma\varepsilon^2}\right) \|\varphi\|_{L^2(\pi)}^2.
  \]
\end{proposition}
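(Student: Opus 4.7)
The plan is to follow the Dolbeault-Mouhot-Schmeiser hypocoercivity framework: compute the Dirichlet form $\innera{-\Ladl\varphi,\varphi}$ explicitly, isolate a coercive dominant part arising from the commutator $[\AA,\Ace]$, and balance the residual cross terms against the microscopic and macroscopic dissipation via Young's inequality with a $(\gamma,\varepsilon)$-dependent choice of $\aa$.

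Using the decomposition $\Ladl = \Ace + \gamma\Lo$, the identities $\Ace^* = -\Ace$ and $\Lo = \Lo^* \leq 0$, and the polarization~\eqref{eq:modified_scalar_product} of $\innera{\cdot,\cdot}$, a direct manipulation yields
\[
\innera{-\Ladl\varphi,\varphi} = \frac{\gamma}{\beta}\|\nabla_p\varphi\|_{L^2(\pi)}^2 - \aa\inner{[\AA,\Ace]\varphi,\varphi}_{L^2(\pi)} - \aa\gamma\inner{\{\AA,\Lo\}\varphi,\varphi}_{L^2(\pi)},
\]
with $[\cdot,\cdot]$ and $\{\cdot,\cdot\}$ denoting commutator and anticommutator. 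By the Gaussian Poincar\'e inequality for $\pi_p$, the first term yields the microscopic coercivity $\gtrsim \gamma\|(I-\Pi)\varphi\|^2_{L^2(\pi)}$. The commutator provides the macroscopic coercivity. Writing $\alpha = \min(1,\varepsilon^{-2})$, $\beta_* = \min(1,\varepsilon^{-1})$, $S = \alpha - \Pi\Ace^2\Pi$, and using the key identity $\Pi\Ace\Pi = 0$ (an immediate consequence of the parity in $\p$ of $\Lh$ and $\Lnh$ via the rules~\eqref{eq:rules_derivatives_p_Pi}), so that $\AA\Pi = 0$, the commutator restricted to the macroscopic subspace $\Pi L^2(\pi)$ reduces to an $\AA\Ace$-term. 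The identity $\Pi\Ace^2\Pi = \alpha - S$ then produces an explicit expression for $\AA\Ace\Pi\varphi$ proportional to $(I - \alpha S^{-1})\Pi\varphi$; combining this with the explicit form $-\Pi\Ace^2\Pi = \tfrac{1}{\beta}\nabla_q^*\nabla_q + \tfrac{2n}{(\beta\varepsilon)^2}\partial_\xi^*\partial_\xi$ and the Poincar\'e inequalities on $\pi_q$ (\cref{as:poincare:U}) and on the Gaussian $\pi_\xi$ yields a spectral gap for $S$ of the appropriate scaling, producing a lower bound proportional to $\beta_*\|\Pi\varphi\|^2_{L^2(\pi)}$ on this subspace.

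The remaining contributions are treated as errors: the off-diagonal portion of the commutator involving $(I-\Pi)\varphi$ and the anticommutator $\inner{\{\AA,\Lo\}\varphi,\varphi}$. These are controlled by Cauchy-Schwarz together with the operator-norm estimates for $\AA\Lo$, $\Lo\AA$, $\AA\Ace(I-\Pi)$, and $S^{-1}\Pi\Ace^2(I-\Pi)$ asserted in Lemma~\ref{lem:list_bounded_op}, whose bounds depend explicitly on $\beta_*$, $\alpha$, the Poincar\'e constants $\kappa_q,\kappa_\xi$, and the structural constants of~\eqref{eq:regularization condition}. Young's inequality absorbs each error term into a combination of the microscopic dissipation $\gamma\|(I-\Pi)\varphi\|^2$ and the macroscopic coercivity from the previous step. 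The resulting system of four scalar inequalities in $\aa$ corresponds to the four limiting regimes $\gamma\to 0$, $\gamma\to\infty$, $\gamma\varepsilon^2\to 0$, and $\gamma\varepsilon^2\to\infty$, each governed by a different dominant error. Setting $\aa = \overline a\min(\gamma,\gamma^{-1},\gamma\varepsilon^2,(\gamma\varepsilon^2)^{-1})$ for a sufficiently small universal $\overline a \in (0,1)$ satisfies all four constraints simultaneously and produces the claimed lower bound with $\widetilde\lambda$ independent of $\gamma,\varepsilon$.

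The principal technical difficulty is precisely this four-way balancing. The minimum is sharp, with a different dominant error governing each regime: the classical Langevin factors $\min(\gamma,\gamma^{-1})$ arise from the standard DMS balance between $\gamma\|(I-\Pi)\varphi\|^2$ and the norm of $\AA\Lo$, while the additional factors $\min(\gamma\varepsilon^2,(\gamma\varepsilon^2)^{-1})$ track the Nos\'e-Hoover feedback $\varepsilon^{-1}\Lnh$ through the prefactor $\beta_*$ and the resolvent shift $\alpha$ built into $\AA$. Establishing uniform operator-norm estimates in $\gamma,\varepsilon$ crucially relies on the regularization condition~\eqref{eq:regularization condition} on $U$: without it, the second derivatives of the potential entering through the $\nabla_q^*\nabla_q$ factors in the definition of $S$ would prevent $\Lo\AA$ from being bounded on $L^2(\pi)$, and the Young-type absorption would fail.
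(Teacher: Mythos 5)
Your skeleton is the paper's own: decompose the modified Dirichlet form using $\innerLpi{\Ladl\varphi,\varphi}=\innerLpi{\Lo\varphi,\varphi}$, get microscopic coercivity $\gamma\beta^{-1}\|\nabla_p\varphi\|^2_{L^2(\pi)}\gtrsim\gamma\|(1-\Pi)\varphi\|^2_{L^2(\pi)}$ from the Gaussian Poincar\'e inequality in $\p$, get macroscopic coercivity from $\AA\Ace\Pi$ via spectral calculus and the Poincar\'e inequalities in $\q$ and $\xi$ (your $\beta_*(1-\alpha S^{-1})$ is exactly the paper's $\Lambda_\varepsilon$ bound), and absorb the cross terms by a $(\gamma,\varepsilon)$-dependent choice of $\aa$. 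The genuine gap is that the decisive quantitative input is neither proved nor correctly located: you need the bound $\norm{\AA\Ace(1-\Pi)}\leq C_1\,\min(1,\varepsilon^{-1})/\varepsilon$ together with the uniform bound on $\AA\Lo$. Lemma~\ref{lem:list_bounded_op}, which you cite for these, only controls elementary blocks such as $\partial^2_{q_i,q_j}(1-P_q)\Lfd$; the operator bounds you actually need are Lemmas~\ref{lem:bound:AALo} and~\ref{lem:bound_AA_Ace}, whose proofs (splitting $\Ace^2\Pi\Lfd$ into $\Lh^2$, $\varepsilon^{-2}\Lnh^2$ and the cross terms $\varepsilon^{-1}\Lnh P_\xi\Lh$, $\varepsilon^{-1}\Lh P_q\Lnh$, plus the elliptic regularity supplied by~\eqref{eq:regularization condition}) are the technical core of the proposition. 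Without the precise $\min(1,\varepsilon^{-1})/\varepsilon$ size of the off-diagonal entry $B_{1,2}$ in~\eqref{eq:expressions_B_ij}, the ``four scalar inequalities'' you invoke cannot even be written down, so the claimed scaling is asserted rather than derived.

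Relatedly, your explanation of where $\min(\gamma\varepsilon^2,(\gamma\varepsilon^2)^{-1})$ comes from is wrong in the half where it matters most: for $\varepsilon\leq 1$ (the only regime in which $\gamma\varepsilon^2$ can be the minimum) one has $\min(1,\varepsilon^{-1})=1$ and $\AA$ is exactly the classical Dolbeault--Mouhot--Schmeiser operator, so the prefactor $\beta_*$ and the shift $\alpha$ are trivial there; the $\varepsilon^2$ loss is caused by the $1/\varepsilon$ growth of $\norm{\AA\Ace(1-\Pi)}$ coming from the Hamiltonian/Nos\'e--Hoover interaction terms, cf.\ Remark~\ref{rmk:motivation_extra_term_scaling} (the modification of $\AA$ only intervenes for $\varepsilon\geq1$ and produces the $(\gamma\varepsilon^2)^{-1}$ factor). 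Two further slips: $\Lo\AA=\Lo\Pi\AA=0$ identically, so there is no norm to estimate there and the regularization condition~\eqref{eq:regularization condition} is needed instead for $\AA\Lo$ and for the $q$-derivative blocks of $\AA\Ace(1-\Pi)$; and the final step is not a single Young absorption with the stated $\aa$ --- the paper analyzes the smallest eigenvalue of ${\bm B}_{\varepsilon,\gamma}$ separately for $\varepsilon\leq1$ (sub-cases $\gamma\varepsilon\lessgtr1$) and for $\varepsilon\geq1$, where it rescales to $\varepsilon{\bm B}_{\varepsilon,\gamma}$ and takes $\aa\sim\min(\gamma\varepsilon,(\gamma\varepsilon)^{-1})$; with the single choice $\aa=\overline{a}\min(\gamma,\gamma^{-1},\gamma\varepsilon^2,(\gamma\varepsilon^2)^{-1})$ the diagonal entry $B_{1,1}=\aa\Lambda_\varepsilon$ is of order $\gamma/\varepsilon$ when $\varepsilon\geq1$, $\gamma\varepsilon\leq1$, so your argument as written does not obviously recover the factor $\gamma$ in that regime. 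You would need to carry out this regime-by-regime eigenvalue estimate explicitly to close the proof.
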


\cref{thm:main} then follows from the inequality
\[
\frac{d}{dt}\left[\mathscr{H}\left(\mathrm{e}^{t \Ladl}\varphi\right)\right] = \innera{\Ladl \mathrm{e}^{t \Ladl}\varphi,\mathrm{e}^{t \Ladl}\varphi} \leq -\widetilde{\lambda} \min\left(\frac{\gamma}{\varepsilon^2},\frac1\gamma,\gamma\varepsilon^2,\frac{1}{\gamma\varepsilon^2}\right) \left\|\mathrm{e}^{t \Ladl}\varphi\right\|_{L^2(\pi)}^2,
\]
upon using the equivalence of norms~\eqref{eq:equi:norm} and resorting to a Gr\"onwall lemma. 

\begin{remark}\label{rem:scaling}
  We motivate why some of the four terms are expected in the scaling~\eqref{eq:spectral:gap} of the lower bound. First, if $\Lnh=0$, the remaining part $\Lh + \gamma \Lo$ of the generator corresponds to the underdamped Langevin equation, whose spectral gap is bounded from above by a term proportional to ${\rm O}(\min(\gamma,\gamma^{-1}))$. Similarly, in the case $\Lh=0$, it can be verified that the framework of~\cite{DMS09} can be directly applied to $\Ladl=\varepsilon^{-1}( \varepsilon\gamma\Lo +\Lnh)$ considered as an operator on~$L^2(\pi_p \pi_\xi)$, meaning that the spectral gap of this operator scales as ${\rm O}\left (\varepsilon^{-1}\min(\gamma\varepsilon,(\gamma\varepsilon)^{-1}) \right) ={\rm O}\left(\min(\gamma,\gamma^{-1}\varepsilon^{-2} \right)$. By this simple analysis, we correctly predict the terms~$\gamma$ and~$1/{\gamma \varepsilon^2}$, but we miss the term~$\gamma \varepsilon^2$ and we incorrectly predict a scaling of order~$\gamma$ instead of~$\gamma/\varepsilon^2$ in the limit~$\varepsilon \rightarrow \infty$ and~$\gamma \rightarrow 0$. The origin of these limitations on the convergence rate comes from an interaction between the Hamiltonian and Nos\'e--Hoover parts, as discussed in Remark~\ref{rmk:motivation_extra_term_scaling} below.
\end{remark}

\subsection{Proof of Proposition~\ref{prop:coercivity}}\label{sec:proof_coercivity}

In the remainder of this section, we use the shorthand notation
\[
  \ev = \min(1,\varepsilon^{-1}).
\]
We first review a few properties of the operator~$\AA$ (obtained by a straightforward adaptation of~\cite[Lemma~1]{Dolbeault2015}).

\begin{lemma}
  \label{lem:DNS:1}
  The operators $\AA$ and $\Ace\AA$ are bounded, and $\Pi \AA = \AA$. Furthermore, for any $f \in L^{2}(\pi)$,
    \[
      \normLpi{\AA f} \leq \frac{1}{2}\normLpi{(1-\Pi)f}, \qquad \normLpi{\Ace \AA f} \leq \ev\normLpi{(1-\Pi)f}.
    \]
  \end{lemma}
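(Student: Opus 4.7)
The plan is to follow the argument of~\cite[Lemma~1]{Dolbeault2015} and adapt it to the current setting, where the regularization is tuned to the parameter~$\varepsilon$ via~$\ev$ and $m_\varepsilon := \min(1,\varepsilon^{-2}) = \ev^2$. Set $B_\varepsilon := -\Pi \Ace^2 \Pi = \frac{2n}{(\beta\varepsilon)^2}\partial_\xi^*\partial_\xi + \frac{1}{\beta}\nabla_q^*\nabla_q$ (read as a nonnegative self-adjoint operator on $\mathrm{Ran}(\Pi)\subset L^2(\pi)$, using the commutation of $\partial_{q_i}^*,\partial_\xi^*$ with~$\Pi$). Then
\[
\AA = -\ev \,[m_\varepsilon + B_\varepsilon]^{-1} \Pi \Ace.
\]

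First I would dispatch $\Pi \AA = \AA$: since $B_\varepsilon$ preserves $\mathrm{Ran}(\Pi)$, so does the resolvent $[m_\varepsilon+B_\varepsilon]^{-1}$, and hence $\AA f \in \mathrm{Ran}(\Pi)$ for every $f \in L^2(\pi)$. Next, I would record the identity $\Pi \Ace \Pi = 0$: the antisymmetry of~$\Ace$ makes $\Pi \Ace\Pi$ antisymmetric on $\mathrm{Ran}(\Pi)$, and a direct check using $\Pi \p = 0$ and $\Pi(|\p|^2 - n/\beta) = 0$ confirms that both $\Pi \Lh \Pi$ and $\Pi \Lnh \Pi$ vanish. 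Consequently $\Pi \Ace = \Pi \Ace (I-\Pi)$ and therefore $\AA = \AA (I-\Pi)$.

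The central step is a polar-type decomposition of $\Ace \Pi$. Since $(\Ace \Pi)^* (\Ace \Pi) = -\Pi \Ace^2 \Pi = B_\varepsilon$, one can write $\Ace\Pi = V B_\varepsilon^{1/2}$ where $V$ is a partial isometry (so $\|V\|\le1$), whence $\Pi\Ace = -B_\varepsilon^{1/2} V^*$ and
\[
\AA f = \ev \,[m_\varepsilon + B_\varepsilon]^{-1} B_\varepsilon^{1/2}\, V^*(I-\Pi) f,
\qquad
\Ace \AA f = \ev\, V B_\varepsilon^{1/2}[m_\varepsilon + B_\varepsilon]^{-1} B_\varepsilon^{1/2}\, V^*(I-\Pi) f,
\]
using $\Pi\AA = \AA$ and $\Ace\Pi = V B_\varepsilon^{1/2}$ for the second formula. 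The bounds now come from spectral calculus applied to the nonnegative operator~$B_\varepsilon$: for every $\lambda\ge0$,
\[
\frac{\sqrt{\lambda}}{m_\varepsilon+\lambda} \leq \frac{1}{2\sqrt{m_\varepsilon}},
\qquad
\frac{\lambda}{m_\varepsilon+\lambda} \leq 1,
\]
so that $\bigl\|[m_\varepsilon+B_\varepsilon]^{-1} B_\varepsilon^{1/2}\bigr\| \leq 1/(2\sqrt{m_\varepsilon})$ and $\bigl\|B_\varepsilon^{1/2}[m_\varepsilon+B_\varepsilon]^{-1}B_\varepsilon^{1/2}\bigr\|\leq 1$.

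Combining these with $\|V\|,\|V^*\|\le 1$ gives
\[
\normLpi{\AA f} \leq \frac{\ev}{2\sqrt{m_\varepsilon}}\normLpi{(I-\Pi)f},
\qquad
\normLpi{\Ace \AA f} \leq \ev\, \normLpi{(I-\Pi)f}.
\]
The tuning $m_\varepsilon = \ev^2$ ensures $\ev/(2\sqrt{m_\varepsilon}) = 1/2$, which yields exactly the stated inequalities. The only real obstacle is justifying the polar decomposition on the unbounded operator $\Ace\Pi$ rigorously in $L^2(\pi)$ (closability and equality of domains); this can be handled either by working on $C_0^\infty$ and closing, or by defining everything through the self-adjoint functional calculus of $B_\varepsilon$ and checking the identities on a core, exactly as done in the Langevin case in~\cite{Dolbeault2015}.
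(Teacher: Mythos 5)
Your argument is correct, but it takes a genuinely different route from the paper. The paper's proof is a short energy estimate: writing $u=\AA f$, the defining relation $(\ev^2-\Pi\Ace^{2}\Pi)u=-\ev\,\Pi\Ace f$ immediately gives $\Pi u=u$, and testing this identity against $u$ (using the antisymmetry of $\Ace$ and $\Pi\Ace\Pi=0$) yields $\ev^2\normLpi{u}^{2}+\normLpi{\Ace\Pi u}^{2}=-\ev\innerLpi{\Ace\Pi u,(1-\Pi)f}$; Cauchy--Schwarz together with the Young inequality $ab\leq a^{2}/4+b^{2}$ then delivers both stated bounds at once, with no spectral theory beyond the quadratic form. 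You instead perform a polar decomposition $\Ace\Pi=VB_\varepsilon^{1/2}$ with $B_\varepsilon=-\Pi\Ace^{2}\Pi$, and control the resulting functions of $B_\varepsilon$ through the elementary inequalities $\sqrt{\lambda}/(m_\varepsilon+\lambda)\leq 1/(2\sqrt{m_\varepsilon})$ and $\lambda/(m_\varepsilon+\lambda)\leq 1$, the tuning $m_\varepsilon=\ev^{2}$ producing exactly the constants $1/2$ and $\ev$. Both routes give identical constants; yours has the advantage of exhibiting $\AA$ and $\Ace\AA$ explicitly as bounded functions of the self-adjoint operator $B_\varepsilon$ composed with partial isometries, which makes the mechanism behind the $\varepsilon$-dependent regularization very transparent, but it carries the extra technical burden you yourself flag, namely closability of the unbounded operator $\Ace\Pi$ and the rigorous existence of its polar decomposition, whereas the paper's weak-formulation computation on $u=\AA f$ sidesteps these domain questions entirely.
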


\begin{proof}
Consider $f\in L^{2}(\pi)$ and $u=\AA f$. Then, $(\ev^2-\Pi\Ace^{2}\Pi)u =- \ev \Pi\Ace f$. This equality already shows that $\Pi u = u$, \textit{i.e.} $\Pi \AA = \AA$. Moreover, upon taking the scalar product with~$u$, and noting that $\Pi \Ace \Pi = 0$,
\[
\begin{aligned}
& \ev^2 \normLpi{u}^{2}+\normLpi{\Ace\Pi u}^{2} = -\ev \innerLpi{\Ace \Pi u,(1-\Pi)f}\\
&\qquad \leq \ev \normLpi{\Ace \Pi u} \normLpi{(1-\Pi)f} \leq \frac{\ev^2}{4} \normLpi{(1-\Pi)f}^{2}+\normLpi{\Ace\Pi u }^{2},
\end{aligned}
\]
which implies the claimed inequalities.
\end{proof}

We now fix $\varphi \in C_0^\infty(\xDomain) \cap L^2_0(\pi)$ and evaluate
\begin{equation}\label{eq:main:ineq}
\begin{aligned}
\innera{-\Ladl \varphi, \varphi}
&= -\gamma \innerLpi{ \Lo \varphi, \varphi}
 +\aa \innerLpi{\Ace \AA \varphi,\varphi} \\
 &\ \ -\aa \innerLpi{\AA\Ace \varphi,\varphi }
 -\gamma \aa\innerLpi{\AA\Lo \varphi,\varphi },
\end{aligned}
\end{equation}
where we have used the fact that $\innerLpi{ \Ladl \varphi, \varphi}  =\innerLpi{ \Lo \varphi, \varphi}$, and $\Lo \AA = \Lo\Pi \AA = 0$. We next consider the four terms on the right-hand side of~\eqref{eq:main:ineq}:
\begin{itemize}
\item The expression~\eqref{eq:def_Lh_Lo} shows that $-\innerLpi{ \Lo \varphi, \varphi} = \beta^{-1} \|\nabla_p\varphi\|_{L^2(\pi)}^2 \geq \beta^{-1}\kappa_p^2 \|(1-\Pi)\varphi\|_{L^2(\pi)}^2$ from a Poincar\'e inequality for the Gaussian measure in~$\p$, pointwise in~$(\q,\xi)$ and then integrated with respect to~$\pi_q(\dd \q) \, \pi_\xi(\dd \xi)$ (in fact, $\kappa_p = \sqrt{\beta/m}$).
\item The term $\innerLpi{\Ace \AA \varphi,\varphi}$ is equal to $\innerLpi{\Ace \AA \varphi,(1-\Pi)\varphi}$ since $\Pi \Ace \Pi = 0$, and is therefore larger than $-\ev \|(1-\Pi)\varphi\|_{L^2(\pi)}^2$ in view of Lemma~\ref{lem:DNS:1}.
\item We decompose the term $-\innerLpi{\AA\Ace \varphi,\varphi }$ as $-\innerLpi{\AA\Ace \Pi \varphi,\varphi } - \innerLpi{\AA\Ace (1-\Pi)\varphi,\varphi }$. We first observe that the operator $\AA\Ace \Pi$ can be written, using spectral calculus, as
  \[
    \AA\Ace \Pi = f_\varepsilon\left(\mathcal{T}\right), \qquad \mathcal{T} = \Pi \left (\frac{2n}{(\beta \varepsilon)^2} \partial_{\xi}^{*}\partial_{\xi} + \frac1\beta \nabla_{q}^{*}\nabla_{q} \right ) \Pi, \qquad f_\varepsilon(x) = \frac{\ev x}{\ev^2 + x}.
  \]
  Moreover, from Poincar\'e inequalities for~$\pi_q$ and~$\pi_\xi$ (with constants~$\kappa_q$ and $\kappa_\xi = \sqrt{\beta}$),
  \[
    \mathcal{T} \geq \alpha_\varepsilon \Pi(1-\Pi_0), \qquad \alpha_\varepsilon = \min\left(\frac{2n \kappa_\xi^2}{(\beta \varepsilon)^2},\frac{\kappa_q^2}{\beta}\right), 
  \]
  so that
  \begin{equation}
    \label{eq:coercivity_Pi}
    \AA\Ace \Pi \geq \Lambda_\varepsilon \Pi(1-\Pi_0), \qquad \Lambda_\varepsilon = \frac{\ev \alpha_\varepsilon}{\ev^2 + \alpha_\varepsilon}.
  \end{equation}
  Note that $\Lambda_\varepsilon$ is of order~1 when $\varepsilon \leq 1$, and of order $\varepsilon^{-1}$ for $\varepsilon \geq 1$. It is precisely at this place that it is crucial to modify the definition of~$\AA$. Indeed, if one keeps the regularization operator $-\left [1-\Pi \Ace^{2}\Pi\right ]^{-1}\Pi\Ace$ as for $\varepsilon \leq 1$, the rate $\Lambda_\varepsilon$ would be replaced by $\alpha_\varepsilon/(1+\alpha_\varepsilon)$, which behaves as $\alpha_\varepsilon \sim \varepsilon^{-2}$ for $\varepsilon$ large. 

  The quantity $\innerLpi{\AA\Ace (1-\Pi)\varphi,\varphi }=\innerLpi{\AA\Ace (1-\Pi)\varphi,\Pi \varphi }$ can be shown to be larger than $-C_1 \max(1,\varepsilon^{-1}) \normLpi{\Pi\varphi}\normLpi{(1-\Pi)\varphi}$ upon proving that the operator $\AA\Ace(1-\Pi)$ is bounded by~$C_1 \max(1,\varepsilon^{-1})$; see Lemma~\ref{lem:bound_AA_Ace} below.
\item Finally, in order to lower bound~$\innerLpi{\AA\Lo \varphi,\varphi } = \innerLpi{\AA\Lo (1-\Pi)\varphi,\Pi \varphi }$ by $-C_2 \normLpi{\Pi\varphi}\normLpi{(1-\Pi)\varphi}$, we prove in Lemma~\ref{lem:bound:AALo} that the operator $\AA\Lo$ is uniformly bounded with respect to~$\varepsilon$ by some constant~$C_2$.
\end{itemize}
Gathering all estimates, we obtain, for $\varphi \in L^2_0(\pi)$ (so that $(1-\Pi_0)\varphi = \varphi$),
\begin{equation}
\begin{aligned}
\innera{-\Ladl \varphi, \varphi} \geq  &
\left( \frac{\gamma \kappa^{2}_{p}}{\beta} - \aa \ev\right)\normLpi{(1-\Pi)\varphi}^{2} + \aa \Lambda_\varepsilon \normLpi{\Pi \varphi}^{2} \\
& \ \ - \aa\left(C_1 \max\left(1,\frac1\varepsilon\right) + \gamma C_2\right) \normLpi{\Pi\varphi}\normLpi{(1-\Pi)\varphi} ,
\end{aligned}
\end{equation}
which can be rewritten as
\begin{equation}\label{eq:neq:XBX}
\innera{-\Ladl \varphi, \varphi} \geq X^T {\bm B}_{\varepsilon,\gamma} X, \qquad X = \begin{pmatrix} \normLpi{\Pi \varphi} \\ \normLpi{(1-\Pi) \varphi} \end{pmatrix}, \qquad {\bm B}_{\varepsilon,\gamma} = \begin{pmatrix}  B_{1,1} & \frac12 B_{1,2} \\ \frac12 B_{1,2} & B_{2,2} \end{pmatrix}, 
\end{equation}
with 
\begin{equation}
\label{eq:expressions_B_ij}
  B_{1,1} = \aa \Lambda_\varepsilon,
  \qquad
  B_{1,2} = - \aa\left(C_1 \max\left(1,\frac1\varepsilon\right) + \gamma C_2\right), 
  \qquad
  B_{2,2} = \frac{\gamma \kappa^{2}_{p}}{\beta} -\aa \ev.
\end{equation}
The result then follows from lower bounds on the smallest eigenvalue of~${\bm B}_{\varepsilon,\gamma}$, which reads
\begin{equation}
\label{eq:expression_smallest_eig_B}
\lambda({\bm B}_{\varepsilon,\gamma}) = \frac{4B_{1,1}B_{2,2}-B_{1,2}^{2}}{B_{1,1}+B_{2,2}+\sqrt{(B_{1,1}-B_{2,2})^{2}+B_{1,2}^{2}}}.
\end{equation}
The scaling of~$\aa$ as a function of~$\varepsilon,\gamma$ is obtained by requiring that the determinant
  \begin{equation}\label{eq:det:2}
    B_{1,1}B_{2,2} - \frac{B_{1,2}^2}{4} = \left(\frac{\gamma \kappa_p^2}{\beta}-\aa\right)\aa \Lambda_\varepsilon - \frac{\aa^2}{4}\left(C_1 \max\left(1,\frac1\varepsilon\right) + \gamma C_2\right)^2
  \end{equation}
is positive. We distinguish two cases:
\begin{itemize}
\item For $\varepsilon \leq 1$, $\ev = 1$ and the factor $\Lambda_\varepsilon$ is of order~1. The scaling of~$\aa$ as a function of~$\varepsilon,\gamma$ suggested by~\eqref{eq:det:2} is
  \begin{equation}
    \label{eq:scaling_a_eps_leq_1}
    \aa = \overline{a} \frac{\gamma}{(\varepsilon^{-1}+\gamma)^2} = \overline{a} \varepsilon \frac{\gamma\varepsilon}{(1+\gamma\varepsilon)^2}
  \end{equation}
  for $\overline{a}>0$ sufficiently small. We further distinguish two cases: (i) For $\gamma \varepsilon \leq 1$, the scaling~\eqref{eq:scaling_a_eps_leq_1} leads to the choice $\aa = \overline{a} \gamma \varepsilon^2$ for $\overline{a}>0$ sufficiently small, in which case the smallest eigenvalue of ${\bm B}_{\varepsilon,\gamma}$ is easily seen to be of order~$\gamma \varepsilon^2$ (since~\eqref{eq:expression_smallest_eig_B} is the ratio of a numerator of order $\gamma^2\varepsilon^2$ and a denominator of order~$\gamma$); (ii) For $\gamma \varepsilon \geq 1$, the scaling~\eqref{eq:scaling_a_eps_leq_1} leads to the choice $\aa = \overline{a}/\gamma$ for $\overline{a}>0$ sufficiently small, in which case the smallest eigenvalue of ${\bm B}_{\varepsilon,\gamma}$ is easily seen to be of order~$\min(\gamma,\gamma^{-1})$ (since the numerator in~\eqref{eq:expression_smallest_eig_B} is of order~1, while the denominator is the sum of terms proportional to~$\gamma$ and~$\gamma^{-1}$). In fact, since~$\varepsilon \leq 1$ and~$\gamma\varepsilon \geq 1$, it holds~$\gamma \geq 1$, so that the smallest eigenvalue of ${\bm B}_{\varepsilon,\gamma}$ for~$\gamma\varepsilon \geq 1$ is of order~$\gamma^{-1}$.

\item For $\varepsilon \geq 1$, the factor $\Lambda_\varepsilon$ is of order~$\varepsilon^{-1}$ and $\ev = \varepsilon^{-1}$. The scaling of~$\aa$ as a function of~$\varepsilon,\gamma$ suggested by~\eqref{eq:det:2} is
  \begin{equation}
    \label{eq:scaling_a_eps_geq_1}
    \aa = \frac{\overline{a}}{\varepsilon} \frac{\gamma}{(1+\gamma)^2}
  \end{equation}
  for $\overline{a}>0$ sufficiently small. An analysis similar to the one performed above, by further distinguishing~$\gamma \leq 1$ and~$\gamma \geq 1$, shows that the smallest eigenvalue of ${\bm B}_{\varepsilon,\gamma}$ scales as~$\aa/\varepsilon$, \emph{i.e.} $\varepsilon^{-2} \min(\gamma,\gamma^{-1})$
\end{itemize}

In conclusion, there exists $\overline{\lambda}>0$ such that the smallest eigenvalue of ${\bm B}_{\varepsilon,\gamma}$ is lower bounded by $\overline{\lambda} \min(\gamma^{-1},\gamma\varepsilon^{-2},\gamma\varepsilon^{2},(\gamma\varepsilon^2)^{-1})$.

\bigskip

We conclude this section with the proofs of the two technical lemmas used above. In these proofs, we denote by
\begin{equation}
  \label{eq:def_Lfd}
\Lfd =  \left (\ev^2 + \Pi \left[\frac{2n}{(\beta\varepsilon)^2} \partial_{\xi}^{*}\partial_{\xi} + \frac1\beta \nabla_{q}^{*}\nabla_{q}\right]\Pi \right )^{-1},
\end{equation}
so that $\AA = -\ev \Lfd \Pi \Ace$. We will repeatedly use in the proofs that $\Lfd$, when restricted to some subspace of~$L^2_0(\pi)$, behaves as $(1+ \Pi \partial_\xi^*\partial_\xi \Pi)^{-1}$ or $(1+ \Pi \nabla_q^*\nabla_q \Pi)^{-1}$. More precisely, introduce the orthogonal projectors~$P_q$ and~$P_\xi$, which correspond to a partial integration with respect to~$\pi_q(\dd \q)$ and~$\pi_\xi(\dd \xi)$ (they are the counterparts for the variables~$\q,\xi$ of the projector~$\Pi$ defined in~\eqref{eq:def_Pi}):
\begin{equation}
  \label{eq:def_P_q}
  \left ( P_q \varphi \right )(\p,\xi) = \int_{\mathbb{R}^n} \varphi(\q,\p,\xi) \, \pi_{q}(\dd \q),
  \qquad 
  \left ( P_\xi \varphi \right )(\q,\p) = \int_{\mathbb{R}} \varphi(\q,\p,\xi) \, \pi_{\xi}(\dd \xi).
\end{equation}
Note that $P_q,P_\xi$ both commute with $\Pi,\nabla_q^*\nabla_q$ and $\partial_{\xi}^{*}\partial_{\xi}$ (in fact $P_q \nabla_q^*\nabla_q = \nabla_q^*\nabla_q P_q = 0$ and $P_\xi \partial_{\xi}^{*}\partial_{\xi} = \partial_{\xi}^{*}\partial_{\xi} P_\xi = 0$) and therefore also with~$\Lfd$, and that
\begin{equation}
  \label{eq:Pi_P_operators_vanishes}
  \Pi P_q \Lh =0, \qquad \Pi P_\xi \Lnh = 0,
\end{equation}
by the invariance of the measure $\pi_q(\dd \q)\pi_p(\dd \p)$ by $\Lh$, and the invariance of $\pi_p(\dd \p)\pi_\xi(\dd \xi)$ by $\Lnh$. Moreover, $\Pi \nabla_q^*\nabla_q \Pi \geq \kappa_q^2 \Pi(1-P_q)$  from a Poincar\'e inequality for~$\pi_q$; and similarly, $\Pi \partial_\xi^*\partial_\xi \Pi \geq \kappa_\xi^2 \Pi(1-P_\xi)$  from a Gaussian Poincar\'e inequality for~$\pi_\xi$. This leads to the following result.

\begin{lemma}
  \label{lem:estimation_inverses_G_eps}
  The operators $\Lfd(1+ \Pi \nabla_q^*\nabla_q \Pi)(1-P_q)$ and $\varepsilon^{-2} \Lfd(1+ \Pi \partial_\xi^*\partial_\xi \Pi)(1-P_\xi)$ are uniformly bounded with respect to$~\varepsilon$. More precisely,
  \begin{equation}
    \label{eq:unif_bound_Lfd_q}
    \| \Lfd(1+ \Pi \nabla_q^*\nabla_q \Pi)(1-P_q) \| \leq \beta\left(1+\kappa_q^{-2}\right),
  \end{equation}
  and
  \begin{equation}
    \label{eq:unif_bound_Lfd_xi}
    \left\| \Lfd(1+ \Pi \partial_\xi^*\partial_\xi \Pi)(1-P_\xi) \right\| \leq \frac{\beta^2}{2n}\left(1+\kappa_\xi^{-2}\right)\varepsilon^2.
  \end{equation}
  Moreover, $\Lfd^{1/2}(1+ \Pi \nabla_q^*\nabla_q \Pi)^{1/2}(1-P_q)$ and $\varepsilon^{-1} \Lfd^{1/2}(1+ \Pi \partial_\xi^*\partial_\xi \Pi)^{1/2}(1-P_\xi)$ are also uniformly bounded with respect to$~\varepsilon$.
\end{lemma}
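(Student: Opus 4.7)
The plan is to use joint functional calculus applied to the commuting nonnegative self-adjoint operators $T := \Pi \nabla_q^*\nabla_q \Pi$ and $S := \Pi \partial_\xi^*\partial_\xi \Pi$ on $\Pi L^2(\pi)$. Setting $m_\varepsilon = 2n/(\beta \varepsilon)^2$, the operator $\Lfd$ restricted to this subspace coincides with $F_\varepsilon(S,T)$ for the symbol $F_\varepsilon(s,t) = (\ev^2 + m_\varepsilon s + t/\beta)^{-1}$. The preliminary step is to verify that the projectors $P_q$ and $P_\xi$ commute with $\Pi, S, T$, and hence with $\Lfd$: this follows from the fact that $P_q, P_\xi, \Pi$ integrate out disjoint groups of variables and from the elementary identities $P_q \nabla_q^*\nabla_q = \nabla_q^*\nabla_q P_q = 0$ (a one-line integration by parts in $L^2(\pi_q)$ using $\partial_{q_i} \mathbf{1} = 0$) and their $\xi$-counterparts. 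Combined with the Poincar\'e inequalities, this yields the spectral lower bounds $T \geq \kappa_q^2 (1-P_q)$ and $S \geq \kappa_\xi^2(1-P_\xi)$ on $\Pi L^2(\pi)$, so that the operator norms of interest reduce to explicit $L^\infty$-norms of symbols on restricted regions of the joint spectrum.

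For \eqref{eq:unif_bound_Lfd_q}, the operator $\Lfd(1+T)(1-P_q)$ has symbol $(1+t)F_\varepsilon(s,t)$ with $(s,t) \in [0,\infty) \times [\kappa_q^2,\infty)$. Since $s$ appears only in the denominator, the supremum is attained at $s=0$, reducing the problem to bounding $h_\varepsilon(t) := \beta(1+t)/(\beta \ev^2 + t)$ on $t \geq \kappa_q^2$. An elementary case analysis on the sign of $1 - \beta \ev^2$ gives $h_\varepsilon(t) \leq \beta(1+\kappa_q^{-2})$ uniformly in $\varepsilon$: when $\beta \ev^2 \leq 1$ the function is decreasing and bounded by $\beta(1+\kappa_q^2)/(\beta \ev^2 + \kappa_q^2) \leq \beta(1+\kappa_q^{-2})$; when $\beta \ev^2 > 1$ it is increasing with supremum $\beta$. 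For \eqref{eq:unif_bound_Lfd_xi}, the analogous symbol is $\varepsilon^{-2}(1+s)F_\varepsilon(s,t)$; multiplying numerator and denominator by $\varepsilon^2$ and discarding the nonnegative $\varepsilon^2 t/\beta$ term reduces the task to bounding $(1+s)/((\varepsilon \ev)^2 + (2n/\beta^2)s)$ on $s \geq \kappa_\xi^2$. Since $(\varepsilon \ev)^2 = \min(\varepsilon^2,1) \leq 1$, the same type of case distinction, now on the sign of $2n/\beta^2 - (\varepsilon \ev)^2$, produces a uniform bound by $\beta^2(1+\kappa_\xi^{-2})/(2n)$, which after restoring the factor $\varepsilon^2$ gives \eqref{eq:unif_bound_Lfd_xi}.

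The square-root estimates follow immediately from the same framework: since all operators involved are nonnegative self-adjoint and mutually commute, joint functional calculus gives $\Lfd^{1/2}(1+T)^{1/2}(1-P_q)$ the symbol $\sqrt{(1+t)F_\varepsilon(s,t)}$, whose sup-norm is the square root of the bound established above, and identically for the $\xi$-analogue. The only real obstacle, hidden inside the case analyses, is uniformity in $\varepsilon$: the denominator of $F_\varepsilon$ contains the potentially small factor $\ev^2$, so the competing term $m_\varepsilon s$ (which dominates when $\varepsilon$ is large and $s$ is away from zero) and the Poincar\'e lower bound on $t$ (which dominates when $s$ is small) must together absorb this degeneracy; the two cases treated above are precisely those in which one or the other competitor takes over, yielding $\varepsilon$-independent constants in the stated bounds.
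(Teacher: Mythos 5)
Your proof is correct and follows essentially the same route as the paper's: commute $P_q$, $P_\xi$ (and hence the relevant spectral reductions) through $\Lfd$, use the Poincar\'e lower bounds $\Pi\nabla_q^*\nabla_q\Pi \geq \kappa_q^2\,\Pi(1-P_q)$ and $\Pi\partial_\xi^*\partial_\xi\Pi \geq \kappa_\xi^2\,\Pi(1-P_\xi)$ on $\Pi L^2(\pi)$, discard the nonnegative complementary term in the denominator of $\Lfd$, and bound the resulting scalar spectral function uniformly in $\varepsilon$. Your joint functional calculus in the two commuting operators and the sign case analyses are just a repackaging of the paper's symmetrized sandwich $(1-P_q+A_q)^{1/2}[\,\cdot\,]^{-1}(1-P_q+A_q)^{1/2}$ bounded via $g_\varepsilon \leq g_0$ (and $h_\varepsilon \leq h_0$ for the $\xi$ part), with the same immediate treatment of the square-root operators.
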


\begin{proof}
Denoting by $A_q = (1-P_q)\Pi \nabla_q^*\nabla_q \Pi(1-P_q)$, 
\[
  \begin{aligned}
    & \Lfd(1+ \Pi \nabla_q^*\nabla_q \Pi)(1-P_q) =  (1-P_q)\Lfd(1-P_q)(1+ \Pi \nabla_q^*\nabla_q \Pi)(1-P_q)\\
    & = (1-P_q + A_q)^{1/2} \left[\ev^2 + 2n(\beta\varepsilon)^{-2}\Pi \partial_{\xi}^{*}\partial_{\xi}\Pi + \beta^{-1} \Pi \nabla_q^*\nabla_q \Pi\right]^{-1}(1-P_q + A_q)^{1/2} \\
    & = (1-P_q + A_q)^{1/2} \left[\ev^2(1-P_q) + \frac{2n}{(\beta\varepsilon)^2}(1-P_q)\Pi \partial_{\xi}^{*}\partial_{\xi}\Pi(1-P_q) + \beta^{-1} A_q\right]^{-1}(1-P_q + A_q)^{1/2} ,
    \end{aligned}
\]
where all operators on the last right-hand side are considered on the subspace $(1-P_q)L^2_0(\pi)$, on which $A_q \geq \kappa_q^2$. Therefore, in the sense of symmetric operators on $(1-P_q)L^2_0(\pi)$,
\[
  \begin{aligned}
    0 & \leq \Lfd(1+ \Pi \nabla_q^*\nabla_q \Pi)(1-P_q) \leq (1 + A_q)^{1/2} \left[\ev^2 + \beta^{-1} A_q\right]^{-1}(1 + A_q)^{1/2} \leq g_\varepsilon\left(A_q\right),
    \end{aligned}
  \]
  with 
  \[
    g_\varepsilon(x) = \frac{1+x}{\ev^2+\beta^{-1}x}.
  \]
  This leads to~\eqref{eq:unif_bound_Lfd_q} since $g_\varepsilon(\kappa_q^2) \leq g_0(\kappa_q^2)$. Similar computations lead to
  \[
    \left\| \Lfd(1+ \Pi \partial_\xi^*\partial_\xi \Pi)(1-P_\xi) \right\| \leq h_0(\kappa_\xi^2) \varepsilon^2, \qquad h_\varepsilon(x) = \frac{1+x}{\min(1,\varepsilon^2)+2n\beta^{-2} x},
  \]
  which gives~\eqref{eq:unif_bound_Lfd_xi}. The estimates on $\Lfd^{1/2}(1+ \Pi \nabla_q^*\nabla_q \Pi)^{1/2}(1-P_q)$ and $\varepsilon^{-1} \Lfd^{1/2}(1+ \Pi \partial_\xi^*\partial_\xi \Pi)^{1/2}(1-P_\xi)$ are obtained in a similar way.
\end{proof} 
  
\begin{lemma}
  \label{lem:bound:AALo}
  The operator $\AA\Lo$ is uniformly bounded for $\varepsilon >0$: There exists $C_2>0$ such that $\|\AA\Lo\| \leq C_2$. 
\end{lemma}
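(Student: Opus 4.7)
The plan is to expand $\AA\Lo = -\ev\Lfd\Pi(\Lh + \varepsilon^{-1}\Lnh)\Lo$ and reduce the two pieces $\Pi\Lh\Lo$ and $\Pi\Lnh\Lo$ to clean first-order operators in~$\q$ or~$\xi$ acting on bounded momentum functionals of~$\varphi$. This reduction is possible because $\Lo$ is a pure $\p$-operator with $\Pi\Lo=0$, so any $\p$-derivative appearing inside $\Lh$ or $\Lnh$ can be pushed through $\Lo$ at the cost of a first-order commutator. The algebraic identities $[\Lo,\nabla_p]=\nabla_p$, $[\Lo,\partial_\xi]=0$, and $\Lo(|\p|^2-n/\beta)=-2(|\p|^2-n/\beta)$ drive the whole computation.

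Concretely, from $\Pi\nabla_p^*=0$ and the fact that $\nabla_q^*$ commutes with~$\Pi$ one has $\Pi\Lh = -\beta^{-1}\nabla_q^*\Pi\nabla_p$, and $\Pi\nabla_p\Lo = \Pi\Lo\nabla_p - \Pi\nabla_p = -\Pi\nabla_p$ (using $\Pi\Lo=0$), giving $\Pi\Lh\Lo = \beta^{-1}\nabla_q^*\Pi\nabla_p$. For the Nos\'e--Hoover part, set $a(\p)=|\p|^2-n/\beta$ and decompose $\Pi\Lnh\Lo\varphi = \Pi[a(\p)\partial_\xi\Lo\varphi] - \Pi[\xi\,\p^T\nabla_p\Lo\varphi]$. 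The self-adjointness of $\Lo$ on $L^2(\pi_p)$ combined with $\Lo a(\p) = -2 a(\p)$ turns the first term into $-2\partial_\xi\Pi[a(\p)\varphi]$; an integration by parts in~$\p$ together with $\Pi\Lo=0$ converts the second term into $-2\beta\xi\Pi[a(\p)\varphi]$. Since $-\partial_\xi+\beta\xi=\partial_\xi^*$, one concludes
\[
\Pi\Lnh\Lo = 2\partial_\xi^*\Pi[a(\p)\,\cdot], \qquad \AA\Lo\varphi = -\beta^{-1}\ev\Lfd\nabla_q^*\Pi\nabla_p\varphi - 2\ev\varepsilon^{-1}\Lfd\partial_\xi^*\Pi[a(\p)\varphi].
\]

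It remains to bound $\ev\Lfd\nabla_q^*$ and $\ev\varepsilon^{-1}\Lfd\partial_\xi^*$ uniformly in~$\varepsilon$. Passing to the adjoint and using~\eqref{eq:def_Lfd} in the form $\Pi\nabla_q^*\nabla_q\Pi \leq \beta\Lfd^{-1}$, one obtains for $g\in\Pi L^2(\pi)$
\[
\ev^2\|\nabla_q\Lfd g\|_{L^2(\pi)}^2 = \ev^2\langle\Lfd g,\Pi\nabla_q^*\nabla_q\Pi\Lfd g\rangle_{L^2(\pi)} \leq \beta\ev^2\langle\Lfd g,g\rangle_{L^2(\pi)} \leq \beta\|g\|_{L^2(\pi)}^2,
\]
the last step using $\ev^2\Lfd\leq 1$. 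The identical argument based on $\Pi\partial_\xi^*\partial_\xi\Pi \leq (\beta\varepsilon)^2/(2n)\,\Lfd^{-1}$ gives $\|\ev\varepsilon^{-1}\Lfd\partial_\xi^*\|\leq \beta/\sqrt{2n}$. Finally, Cauchy--Schwarz in~$\pi_p$ together with the Gaussian moments $\int p_i^2\,\pi_p = 1/\beta$ and $\int a(\p)^2\,\pi_p = 2n/\beta^2$ furnish $\|\Pi\nabla_p\varphi\|_{L^2(\pi)}\leq\sqrt{n\beta}\,\|\varphi\|_{L^2(\pi)}$ and $\|\Pi[a(\p)\varphi]\|_{L^2(\pi)}\leq(\sqrt{2n}/\beta)\|\varphi\|_{L^2(\pi)}$. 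Collecting the estimates gives $\|\AA\Lo\varphi\|_{L^2(\pi)}\leq(\sqrt{n}+2)\|\varphi\|_{L^2(\pi)}$, so one may take $C_2 = \sqrt{n}+2$. The only mildly delicate step is the integration-by-parts reduction leading to $\Pi\Lnh\Lo = 2\partial_\xi^*\Pi[a(\p)\,\cdot]$; all remaining estimates are routine spectral calculus on the commuting operators $\Pi\nabla_q^*\nabla_q\Pi$ and $\Pi\partial_\xi^*\partial_\xi\Pi$.
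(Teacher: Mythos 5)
Your proof is correct, and it follows a genuinely different route from the paper's. The paper bounds the two pieces $\ev\Lfd\Pi\Lh\Lo$ and $\ev\varepsilon^{-1}\Lfd\Pi\Lnh\Lo$ by factoring through $(1+\Pi\nabla_q^*\nabla_q\Pi)^{-1}\Pi(1-P_q)\Lh\Lo$ and $(1+\Pi\partial_\xi^*\partial_\xi\Pi)^{-1}\Pi(1-P_\xi)\Lnh\Lo$, invoking Lemma~\ref{lem:estimation_inverses_G_eps} for the prefactors, Lemma~\ref{lem:Pi_op_p} for the momentum part, and an external boundedness result (\cite[Proposition~A.3]{roussel2017spectral}, which rests on the Dolbeault--Mouhot--Schmeiser elliptic regularity and hence on the second part of Assumption~\ref{as:poincare:U}). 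You instead compute $\Pi\Lh\Lo$ and $\Pi\Lnh\Lo$ \emph{exactly}: the commutator $[\Lo,\nabla_p]=\nabla_p$ together with $\Pi\nabla_p^*=0$ gives $\Pi\Lh\Lo=\beta^{-1}\nabla_q^*\Pi\nabla_p$, and the eigenrelation $\Lo\bigl(|\p|^2-n/\beta\bigr)=-2\bigl(|\p|^2-n/\beta\bigr)$ plus one integration by parts in~$\p$ gives $\Pi\Lnh\Lo=2\partial_\xi^*\Pi\bigl[(|\p|^2-n/\beta)\,\cdot\,\bigr]$ (this agrees with the paper's $\beta^{-3}\partial_\xi^*\Pi\Delta_p\nabla_p^*\nabla_p$ since $\Delta_p^*\mathbf{1}=\beta^2(|\p|^2-n/\beta)$). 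Only first-order operators in $\q$ and~$\xi$ remain, so the bounds $\|\ev\Lfd\nabla_q^*\|\le\sqrt\beta$ and $\|\ev\varepsilon^{-1}\Lfd\partial_\xi^*\|\le\beta/\sqrt{2n}$ follow from pure spectral calculus on the commuting operators $\Pi\nabla_q^*\nabla_q\Pi$ and $\Pi\partial_\xi^*\partial_\xi\Pi$ dominated by $\Lfd^{-1}$, with no use of the Poincar\'e constants, of the regularity condition~\eqref{eq:regularization condition}, or of Lemmas~\ref{lem:estimation_inverses_G_eps} and~\ref{lem:Pi_op_p}. What your approach buys is self-containedness and an explicit constant $C_2=\sqrt n+2$; what the paper's approach buys is that the same factorization machinery is reused verbatim in the harder Lemma~\ref{lem:bound_AA_Ace}, where the operators are genuinely second order and your exact-cancellation trick is not available. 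The only points to tidy are routine: the manipulations should be performed on a dense core (e.g.\ $C_0^\infty$ or Hermite polynomials) and extended by density, and the phrase ``together with $\Pi\Lo=0$'' in the reduction of the second Nos\'e--Hoover term is not quite the mechanism used (it is the integration by parts $\Pi[\p^T\nabla_p\psi]=\beta\Pi[(|\p|^2-n/\beta)\psi]$ followed again by the eigenrelation); neither affects correctness.
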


\begin{proof}
  Since $\AA\Lo= - \ev\Lfd\Pi\Lh\Lo-  \ev\varepsilon^{-1}\Lfd\Pi\Lnh\Lo$, it suffices to prove that each operator in the right-hand side of this equality is uniformly bounded with respect to~$\varepsilon>0$. First, in view of~\eqref{eq:Pi_P_operators_vanishes}, the operator
  \[
  \begin{aligned}
    \ev\Lfd\Pi\Lh\Lo & = \ev\Lfd\Pi(1-P_q)\Lh\Lo \\
    & = \ev \Lfd(1+\Pi\nabla_q^*\nabla_q\Pi)(1-P_q)(1+\Pi\nabla_q^*\nabla_q\Pi)^{-1}\Pi(1-P_q)\Lh\Lo
  \end{aligned}
  \]
  is the product of the operator $\Lfd(1+\Pi\nabla_q^*\nabla_q\Pi)(1-P_q)$ (uniformly bounded in~$\varepsilon$ from~\eqref{eq:unif_bound_Lfd_q}) and the operator $(1+\Pi\nabla_q^*\nabla_q\Pi)^{-1}\Pi(1-P_q)\Lh\Lo$, which is bounded (see for instance~\cite[Proposition~A.3]{roussel2017spectral}); multiplied by the prefactor~$\ev \leq 1$. We next consider
  \[
  \ev\varepsilon^{-1}\Lfd\Pi\Lnh\Lo = \ev\varepsilon^{-1}\Lfd(1+\Pi\partial_\xi^*\partial_\xi\Pi)(1-P_\xi)(1+\Pi\partial_\xi^*\partial_\xi\Pi)^{-1}\Pi(1-P_\xi)\Lnh\Lo.
  \]
  Note first that the norm of the operator $\ev\varepsilon^{-1}\Lfd(1+\Pi\partial_\xi^*\partial_\xi\Pi)(1-P_\xi)$ is of order~$\min(1,\varepsilon)$ by~\eqref{eq:unif_bound_Lfd_xi}. It remains to prove that $(1+\Pi\partial_\xi^*\partial_\xi\Pi)^{-1}\Pi(1-P_\xi)\Lnh\Lo$ is bounded. We note for this that
  \[
    \begin{aligned}
      \Pi(1-P_\xi)\Lnh\Lo & = -\frac{1}{\beta^3}(1-P_\xi)\Pi \left((\partial_\xi-\partial_\xi^*)\nabla_p^*\nabla_p + \Delta_p^*\partial_\xi - \Delta_p\partial_\xi^* \right) \nabla_p^* \nabla_p \\
      & = \frac{1}{\beta^3}(1-P_\xi)\partial_\xi^*\Pi\Delta_p \nabla_p^* \nabla_p,
    \end{aligned}
  \]
  where we used~\eqref{eq:rules_derivatives_p_Pi}. The conclusion then follows from the fact that $\Pi\Delta_p \nabla_p^* \nabla_p$ is bounded (see Lemma~\ref{lem:Pi_op_p} below) as well as $\mathcal{T}_\xi = (1-P_\xi)(1+\Pi\partial_\xi^*\partial_\xi\Pi)^{-1}\Pi (1-P_\xi)\partial_\xi^*$ (by computing $\mathcal{T}_\xi \mathcal{T}_\xi^*$ and using spectral calculus together with the lower bound $\partial_\xi^*\partial_\xi \geq \kappa_\xi^2 (1-P_\xi)$ on $(1-P_\xi)L^2_0(\pi)$).

  In conclusion, $\AA\Lo= - \ev\Lfd\Pi\Lh\Lo-  \ev\varepsilon^{-1}\Lfd\Pi\Lnh\Lo$ is bounded, with an operator norm of order~$\ev + \min(1,\varepsilon)$, which is of order~1 uniformly in~$\varepsilon>0$.
\end{proof}

\begin{lemma}\label{lem:bound_AA_Ace}
  There exists $C_1>0$ such that
  \[
  \forall \varepsilon > 0, \qquad \norm{\AA\Ace(1-\Pi)} \leq C_1 \max\left(1,\frac{1}{\varepsilon}\right).
  \]
\end{lemma}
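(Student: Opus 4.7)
The approach mimics that of Lemma~\ref{lem:bound:AALo}. Using $\AA = -\ev\Lfd\Pi\Ace$, write $\AA\Ace(1-\Pi) = -\ev\Lfd\,\Pi\Ace\cdot\Ace(1-\Pi)$. Via integration by parts in~$\p$ against~$\pi_p$ (extending the rules~\eqref{eq:rules_derivatives_p_Pi} to polynomial weights in~$\p$), one computes the key identities
\[
\Pi\Lh = -\sum_{i=1}^n \partial_{q_i}^*\, M_i, \qquad \Pi\Lnh = -\partial_\xi^*\, N,
\]
where $M_i\varphi(\q,\xi) = \int p_i\, \varphi\, \pi_p(\dd \p)$ and $N\varphi(\q,\xi) = \int (|p|^2-n/\beta)\, \varphi\, \pi_p(\dd \p)$ are bounded from $L^2(\pi)$ to $L^2(\pi_q\pi_\xi)$ by Cauchy--Schwarz against the Gaussian moments. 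Consequently, the image of $\Pi\Ace(\cdot)$ is contained in $(1-P_q)\Pi L^2(\pi) \oplus (1-P_\xi)\Pi L^2(\pi)$, giving the splitting
\[
\AA\Ace(1-\Pi) = \ev \Lfd \sum_i \partial_{q_i}^* M_i \Ace(1-\Pi) \;+\; \ev\varepsilon^{-1} \Lfd \partial_\xi^* N \Ace(1-\Pi).
\]

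For the first summand, factor $\Lfd\partial_{q_i}^* = \Lfd(1+\Pi\nabla_q^*\nabla_q\Pi)(1-P_q)\cdot (1+\Pi\nabla_q^*\nabla_q\Pi)^{-1}\partial_{q_i}^*$ and use Lemma~\ref{lem:estimation_inverses_G_eps} to bound the first factor uniformly in~$\varepsilon$. For the remaining piece $(1+\Pi\nabla_q^*\nabla_q\Pi)^{-1}\partial_{q_i}^* M_i \Ace(1-\Pi)$, expand $\Ace = \Lh + \varepsilon^{-1}\Lnh$ and perform one more integration by parts: a direct computation yields $M_i\Lh(1-\Pi) = -\sum_j \partial_{q_j}^* M_{ij}(1-\Pi)$ with $M_{ij}\varphi = \int p_i p_j \varphi\, \pi_p(\dd \p)$ bounded, and an analogous identity for $M_i\Lnh(1-\Pi)$ producing $\partial_\xi^*$, $\partial_\xi$, or multiplication by~$\xi$ composed with bounded moments in~$\p$. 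The elliptic estimate that $(1+\Pi\nabla_q^*\nabla_q\Pi)^{-1}\partial_{q_i}^*\partial_{q_j}^*$ is uniformly bounded, a consequence of~\eqref{eq:regularization condition} (as in~\cite[Section~3]{Dolbeault2015}), then controls the $\Lh$-contribution, while the $\Lnh$-contribution inherits the extra prefactor~$\varepsilon^{-1}$.

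For the second summand, Lemma~\ref{lem:estimation_inverses_G_eps} provides $\ev\varepsilon^{-1}\|\Lfd(1+\Pi\partial_\xi^*\partial_\xi\Pi)(1-P_\xi)\| = O(\min(1,\varepsilon))$, so it remains to bound the residual operator $(1+\Pi\partial_\xi^*\partial_\xi\Pi)^{-1}\partial_\xi^* N \Ace(1-\Pi)$ uniformly. Expanding $N\Ace(1-\Pi)$ by the same integration-by-parts technique, one again obtains compositions of $\partial_{q_j}^*$ and $\partial_\xi^*$ with bounded $\p$-moments, controlled by the Gaussian Poincar\'e inequality for~$\pi_\xi$ and by the elliptic estimates above. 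Collecting all contributions and tracking the $\varepsilon$-powers (the $\varepsilon^{-1}$ from each factor of $\Lnh$ in $\Ace$, the $O(\varepsilon^2)$ gain from $\Lfd$ acting on $(1-P_\xi)\Pi L^2$, and the uniform bounds from $\Lfd$ on $(1-P_q)\Pi L^2$) yields the claimed estimate $\|\AA\Ace(1-\Pi)\| \leq C_1 \ev/\varepsilon$. The principal technical difficulty is the careful bookkeeping of the $\varepsilon$-scalings across the mixed $(q,\xi)$-terms and the verification of the second-order elliptic estimates for compositions of $\partial_{q_i}^*$, $\partial_{q_j}^*$, and $\partial_\xi^*$ under Assumption~\ref{as:poincare:U}.
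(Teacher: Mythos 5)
Your plan is, in essence, the paper's own argument written without passing to the adjoint: the paper bounds $(1-\Pi)\Ace^{2}\Pi\Lfd$ by expanding $\Ace^2$, peeling off bounded momentum operators (Lemma~\ref{lem:Pi_op_p}) against the $(\q,\xi)$-resolvent estimates of Lemmas~\ref{lem:estimation_inverses_G_eps} and~\ref{lem:list_bounded_op}, while you keep $\AA\Ace(1-\Pi)=\ev\Lfd\bigl(\sum_i\partial_{q_i}^*M_i+\varepsilon^{-1}\partial_\xi^*N\bigr)\Ace(1-\Pi)$ and organize the $\p$-integrations through moment operators. Your identities $\Pi\Lh=-\sum_i\partial_{q_i}^*M_i$ and $\Pi\Lnh=-\partial_\xi^*N$, the boundedness of the moment operators, and the elliptic bound on $(1+\Pi\nabla_q^*\nabla_q\Pi)^{-1}\partial_{q_i}^*\partial_{q_j}^*$ are all correct and consistent with the paper's toolkit, and they dispose of the pure $\Lh$--$\Lh$ and $\Lnh$--$\Lnh$ contributions.

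The gap is in the mixed Hamiltonian/Nos\'e--Hoover terms, which are exactly the delicate ones (see Remark~\ref{rmk:motivation_extra_term_scaling}). In your first summand you spend the entire regularization of $\Lfd$ on the $\q$-variable through $\Lfd(1+\Pi\nabla_q^*\nabla_q\Pi)(1-P_q)$; the residual for the $\Lnh$-part, $(1+\Pi\nabla_q^*\nabla_q\Pi)^{-1}\partial_{q_i}^*M_i\Lnh(1-\Pi)$, then contains a bare $\partial_\xi$ and a multiplication by~$\xi$ (i.e.\ $\beta^{-1}(\partial_\xi+\partial_\xi^*)$) composed with bounded $\p$-moments, which the $\q$-resolvent does not control; the prefactor $\varepsilon^{-1}$ does not make an unbounded operator bounded. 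Symmetrically, in your second summand the expansion of $N\Lh(1-\Pi)$ produces, besides $-\sum_j\partial_{q_j}^*\widetilde M_j(1-\Pi)$ with $\widetilde M_j\varphi=\int p_j(|\p|^2-n/\beta)\varphi\,\pi_p(\dd\p)$, the extra term $2\,\nabla U(\q)^T M(1-\Pi)$ (the $\Pi$-term that cancelled the analogous contribution in $M_i\Lh(1-\Pi)$ is absent here), and the factor $(1+\Pi\partial_\xi^*\partial_\xi\Pi)^{-1}\partial_\xi^*$ you keep gives no control on the unbounded multiplication by $|\nabla U|$. Closing these cases requires precisely the two devices of the paper's proof that your sketch omits: (i) splitting the gain of $\Lfd$ between the two variables via the square-root bounds of Lemma~\ref{lem:estimation_inverses_G_eps}, which is what makes the mixed operators of Lemma~\ref{lem:list_bounded_op}, e.g.\ $\varepsilon^{-1}\partial_{q_i}^*(1-P_q)\partial_\xi(1-P_\xi)\Lfd$, bounded and is the actual source of the factor $\ev/\varepsilon$; and (ii) a separate treatment of the multiplication-type pieces (in the paper, $\Lnh P_\xi\Lh\Pi$ and $\Lh P_q\Lnh\Pi$, i.e.\ multiplications by $\xi\p$ and $\p^T\nabla U$), handled through the tensor-product structure after inserting $P_q$ or $P_\xi$, or through the weighted estimate $\||\nabla U|h\|_{L^2(\pi_q)}\leq C(\|h\|_{L^2(\pi_q)}+\|\nabla h\|_{L^2(\pi_q)})$ from Villani. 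As written, your factorizations leave unbounded leftovers for these cross terms, so the claimed estimate does not yet follow.
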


\begin{proof}
  Since $(\AA\Ace(1-\Pi))^* = (1-\Pi)\Ace \AA^{*} = \ev (1-\Pi)\Ace^{2} \Pi \Lfd$, the result is a consequence of the bound 
  \[
  \forall \varepsilon > 0, \qquad \norm{\Ace^{2} \Pi \Lfd} \leq C_1 \max\left(\varepsilon,\frac{1}{\varepsilon}\right).
  \]
  In fact, using $\Lh \Pi P_q = 0$ and $\Lnh \Pi P_\xi = 0$, 
  \begin{equation}
    \label{eq:AcA*:bound:1}
    \begin{aligned}
      \Ace^{2} \Pi \Lfd & = \Lh^{2} \Pi (1-P_q)\Lfd + \frac{1}{\varepsilon^{2}}\Lnh^{2} \Pi (1-P_\xi)\Lfd \\
      & \ \ + \frac{1}{\varepsilon}\Lnh\Lh \Pi (1-P_q)\Lfd + \frac{1}{\varepsilon}\Lh\Lnh \Pi (1-P_\xi)\Lfd.
      \end{aligned}
\end{equation}
Let us consider successively the various terms on the right-hand side. First, in view of the rules~\eqref{eq:rules_derivatives_p_Pi},
\[
\beta^2 \Lh^{2} \Pi (1-P_q)\Lfd = \sum_{i=1}^n \sum_{j=1}^n \left(\partial_{q_i,q_j}^2 (1-P_q)\Lfd\right) \left(\partial_{p_i}^*\partial_{p_j}^*\Pi\right) - \sum_{i=1}^n \sum_{j=1}^n \left(\partial_{q_i}^*\partial_{q_j} (1-P_q)\Lfd\right) \left(\partial_{p_i} \partial_{p_j}^*\Pi\right),
\]
which is a sum of bounded operators in view of Lemmas~\ref{lem:list_bounded_op} and~\ref{lem:Pi_op_p}. Similarly,
\[
  \frac{1}{\varepsilon^2} \Lnh^2 \Pi (1-P_\xi)\Lfd\Pi = \frac{1}{\beta^4}\frac{1}{\varepsilon^2}\left[(\partial_\xi-\partial_\xi^*)\partial_\xi\nabla_p^*\nabla_p + \partial_\xi^2\Delta_p^* - \partial_\xi^*\partial_\xi\Delta_p\right]\Delta_p^*\Pi (1-P_\xi)\Lfd,
\]
is a sum of bounded operators in view of Lemmas~\ref{lem:list_bounded_op} and~\ref{lem:Pi_op_p}. Consider now the terms involving both $\Lh$ and $\Lnh$. We need to introduce projectors $1-P_q$ and $1-P_\xi$ in order to rely on Lemma~\ref{lem:estimation_inverses_G_eps}. We note to this end that $\Lnh \Lh \Pi (1-P_q) = \Lnh (1-P_\xi)\Lh \Pi (1-P_q) + \Lnh P_\xi \Lh \Pi(1-P_q)$ and $\Lh \Lnh \Pi (1-P_\xi) = \Lh(1-P_q)\Lnh \Pi (1-P_\xi) + \Lh P_q\Lnh \Pi (1-P_\xi)$. Straightforward computations show that 
\[
 \frac1\varepsilon \Lnh P_\xi \Lh \Pi(1-P_q) \Lfd \varphi = - \frac1\varepsilon \xi \p^T \nabla_q \left(\Pi P_\xi (1-P_q)\Lfd\varphi\right),
\]
which is the product of two functions depending on the variables $\xi,\p$ and~$\q$, respectively, with $(\p,\xi) \mapsto \xi \p$ belonging to~$L^2(\pi_p \, \pi_\xi)$. Note also that the operators~$\partial_{q_i} \Pi P_\xi (1-P_q)\Lfd$ are uniformly bounded in~$\varepsilon>0$ in view of~\eqref{eq:unif_bound_Lfd_q}, so that finally~$\varepsilon^{-1} \Lnh P_\xi \Lh \Pi(1-P_q)$ has an operator norm of order~$\varepsilon^{-1}$. A similar reasoning shows that the operator 
\[
\frac1\varepsilon \Lh P_q\Lnh \Pi (1-P_\xi) \Lfd \varphi = -\frac2\varepsilon \p^T \nabla V \partial_\xi \left(\Pi P_q (1-P_\xi)\Lfd \varphi\right)
\]
is bounded, with an operator norm of order~$\varepsilon$ by Lemma~\ref{lem:estimation_inverses_G_eps}.
In addition, 
\[
  \begin{aligned}
    & \frac{1}{\varepsilon} \Lnh (1-P_\xi)\Lh \Pi (1-P_q) \Lfd \\
    & \quad = \frac{1}{\beta^3}\left[\frac{1}{\varepsilon}(\partial_\xi-\partial_\xi^*)(1-P_\xi)\nabla_p^*\nabla_p + \frac{1}{\varepsilon}\partial_\xi(1-P_\xi)\Delta_p^* - \frac{1}{\varepsilon}\partial_\xi^*(1-P_\xi)\Delta_p\right] \nabla_p^* \nabla_q (1-P_q)\Lfd,
  \end{aligned}
  \]
  and
  \[
  \frac{1}{\varepsilon} \Lh(1-P_q)\Lnh \Pi (1-P_\xi) \Lfd = \frac{1}{\beta^3} \frac{1}{\varepsilon}(\nabla_p^*\nabla_q-\nabla_q^* \nabla_p)(1-P_q)\partial_\xi (1-P_\xi)\Lfd \Delta_p^*\Pi,
\]
are sums of bounded operators in view of Lemma~\ref{lem:list_bounded_op}. Therefore, $\varepsilon^{-1}\Lnh\Lh \Pi (1-P_q)\Lfd$ and $\varepsilon^{-1} \Lh\Lnh \Pi (1-P_\xi)\Lfd$ are bounded operators with operator norms respectively of order~$\max(1,\varepsilon^{-1})$ and~$\max(1,\varepsilon)$. This finally gives the claimed result. 
\end{proof}

\begin{remark}
  \label{rmk:motivation_extra_term_scaling}
  Among the various terms in the decomposition of $\AA\Ace(1-\Pi)$ we consider in the proof of Lemma~\ref{lem:bound_AA_Ace}, the only one which is not bounded as $\varepsilon \to 0$ is $\varepsilon^{-1} \Lnh P_\xi \Lh (1-P_q)\Pi\Lfd$. This term arises from the interaction between the Hamiltonian and Nos\'e--Hoover parts of the dynamics, and is responsible for the factor $\max(1,\varepsilon^{-1})$ in the expression of $B_{1,2}$ in~\eqref{eq:expressions_B_ij}, which itself leads to the extra term $\gamma\varepsilon^2$ in the scaling of the lower bound of Proposition~\ref{prop:coercivity}.
\end{remark}  

Note that, crucially, operators in the~$\xi$ variable in the computations of the proof of Lemma~\ref{lem:bound_AA_Ace} always appear with a prefactor~$\varepsilon^{-1}$. The fact that this is the correct scaling for the boundedness of these operators comes from the following result.

\begin{lemma}
  \label{lem:list_bounded_op}
  The operators $\partial_{q_i,q_j}^2 (1-P_q)\Lfd$, $\partial_{q_i}^*\partial_{q_j} (1-P_q)\Lfd$, $\varepsilon^{-1}\partial_{\xi}^*(1-P_\xi)\partial_{q_i}(1-P_q)\Lfd$, $\varepsilon^{-1} \partial_{q_i}(1-P_q)\partial_\xi(1-P_\xi)\Lfd$, $\varepsilon^{-1} \partial_{q_i}^*(1-P_q)\partial_\xi(1-P_\xi)\Lfd$, $\varepsilon^{-2}\partial_\xi^2(1-P_\xi)\Lfd$, $\varepsilon^{-2}\partial_\xi^*\partial_\xi(1-P_\xi)\Lfd$ are uniformly bounded with respect to~$\varepsilon>0$.
\end{lemma}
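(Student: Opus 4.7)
The plan is to split the seven operators into three groups according to which variables the derivatives involve, and in each case exploit the fact that on $\Pi L^2$ the operators $A := \Pi\partial_\xi^*\partial_\xi\Pi$, $B := \Pi\nabla_q^*\nabla_q\Pi$ and $\Lfd$ are simultaneously diagonalizable, so that spectral calculus is available.

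For the pure-$q$ operators $\partial_{q_i,q_j}^2(1-P_q)\Lfd$ and $\partial_{q_i}^*\partial_{q_j}(1-P_q)\Lfd$ I would factorise
\[
\partial_{q_i,q_j}^2(1-P_q)\Lfd = \bigl[\partial_{q_i,q_j}^2(1+\nabla_q^*\nabla_q)^{-1}\bigr]\cdot\bigl[(1+\Pi\nabla_q^*\nabla_q\Pi)\Lfd(1-P_q)\bigr],
\]
which is meaningful since the relevant range of $\Lfd(1-P_q)$ lies in $\Pi L^2$, where $\Pi\nabla_q^*\nabla_q\Pi$ acts as $\nabla_q^*\nabla_q$. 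The first bracket is bounded thanks to the $L^2(\pi_q)\to H^2(\pi_q)$ regularity ensured by hypothesis~\eqref{eq:regularization condition} (mentioned right after Assumption~\ref{as:poincare:U}), and the second bracket is bounded by~\eqref{eq:unif_bound_Lfd_q}. For the pure-$\xi$ operators I would perform the analogous split, pairing $\partial_\xi^*\partial_\xi(1+\partial_\xi^*\partial_\xi)^{-1}$ (bounded by $1$ in the Hermite basis of $\pi_\xi$) with $\varepsilon^{-2}(1+\Pi\partial_\xi^*\partial_\xi\Pi)\Lfd(1-P_\xi)$, bounded by~\eqref{eq:unif_bound_Lfd_xi}. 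For the operator $\varepsilon^{-2}\partial_\xi^2(1-P_\xi)\Lfd$ I would first rewrite $\partial_\xi^2 = \beta\xi\partial_\xi - \partial_\xi^*\partial_\xi$ (from $\partial_\xi^* = -\partial_\xi + \beta\xi$) and verify in the Hermite basis that $\xi\partial_\xi(1+\partial_\xi^*\partial_\xi)^{-1}$ is bounded, reducing to the previous case.

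For the mixed operators (3)--(5) the clean factorisation above is no longer available, so I would instead compute $\|\cdot\varphi\|^2$ by duality, use the commutativity of $q$- and $\xi$-derivatives together with $\partial_\xi\partial_\xi^* = \partial_\xi^*\partial_\xi + \beta$ and $\partial_{q_i}\partial_{q_i}^* = \partial_{q_i}^*\partial_{q_i} + \beta\partial_{q_i,q_i}^2 U$, and then pass to joint spectral calculus in~$(A,B)$. For the model term (operator (4)) this reduces to bounding
\[
\frac{\varepsilon^{-2}\,a\,b}{\left(\ev^2 + \tfrac{2n}{(\beta\varepsilon)^2}a + \beta^{-1}b\right)^{2}} \;\leq\; \frac{\beta^3}{8n},
\]
where the estimate follows from the AM--GM inequality $(x+y)^2\ge 4xy$ applied to $x = \tfrac{2n}{(\beta\varepsilon)^2}a$ and $y = \beta^{-1}b$. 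The lower-order pieces from $[\partial_\xi,\partial_\xi^*]=\beta$ (in operators (3) and (5)) yield spectral symbols of the form $\varepsilon^{-2}\beta b/(\ldots)^{2}$, which are absorbed by combining AM--GM with the spectral lower bound $a \ge \kappa_\xi^2$ available on the range of $(1-P_\xi)$; operators (3), (4) close in this way.

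The main obstacle is operator (5): the commutator $[\partial_{q_i},\partial_{q_i}^*] = \beta\partial_{q_i,q_i}^2 U$ introduces the unbounded multiplication operator $\partial_{q_i,q_i}^2 U$ in the duality computation, which cannot be handled by pure spectral calculus on $(A,B)$. To control it I would insert square roots of $(1+\nabla_q^*\nabla_q)^{-1}$ and bound $\|(1+\nabla_q^*\nabla_q)^{-1/2}\,\partial_{q_i,q_i}^2 U\,(1+\nabla_q^*\nabla_q)^{-1/2}\|$ using the pointwise Hessian bound $|\nabla^2 U| \le c_3(1+|\nabla U|)$ from hypothesis~\eqref{eq:regularization condition}, together with the fact that $|\nabla U|^2$ is $L^2(\pi_q)$-controlled on the image of $(1+\nabla_q^*\nabla_q)^{-1/2}$ via the $H^2$-regularity it provides. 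Plugging this bounded factor into the spectral-calculus estimate for operator~(5) then closes the argument. This is precisely the place where the second condition in~\eqref{eq:regularization condition} is genuinely needed.
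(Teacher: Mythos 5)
Your proposal is correct, and for the pure-$q$ and pure-$\xi$ operators it coincides with the paper's argument: both reduce, via the commutation of $P_q,P_\xi,\Pi\nabla_q^*\nabla_q\Pi,\Pi\partial_\xi^*\partial_\xi\Pi$ with $\Lfd$ and the bounds of Lemma~\ref{lem:estimation_inverses_G_eps}, to the boundedness of $\partial^2_{q_i,q_j}(1+\nabla_q^*\nabla_q)^{-1}$ and its $\xi$-analogue, the former coming from the $H^2(\pi_q)$ elliptic regularity guaranteed by~\eqref{eq:regularization condition} (as in~\cite[Section~3]{Dolbeault2015}) and the latter from the explicit Gaussian/Hermite structure in~$\xi$. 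For the mixed operators your route is genuinely different in organization: the paper factorizes, e.g., $\varepsilon^{-1}\partial_\xi(1-P_\xi)\partial_{q_i}^*(1-P_q)\Lfd = \mathcal{T}_\xi R_{q_i} S_{q,\xi}$, using the square-root estimates of Lemma~\ref{lem:estimation_inverses_G_eps} for the uniformly bounded factor $S_{q,\xi}$, single-derivative spectral calculus for $\mathcal{T}_\xi$, and Villani's Lemma~A.24 (i.e.\ $\||\nabla U| h\|_{L^2(\pi_q)} \leq C(\|h\|+\|\nabla h\|)$) to absorb the multiplication part of $\partial_{q_i}^*$ in $R_{q_i}$; you instead compute $T^*T$, commute the $q$- and $\xi$-derivatives, and conclude by joint spectral calculus in $(\Pi\partial_\xi^*\partial_\xi\Pi,\Pi\nabla_q^*\nabla_q\Pi)$ with the AM--GM bound on the symbol $\varepsilon^{-2}ab\,(\ev^2+\tfrac{2n}{(\beta\varepsilon)^2}a+\beta^{-1}b)^{-2}$, which is in effect a re-derivation of the product of the two square-root bounds, and with the Poincar\'e lower bounds $a\geq\kappa_\xi^2$, $b\geq\kappa_q^2$ absorbing the zero-order commutator terms. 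Your identification of operator~(5) as the delicate one is exactly where the paper invokes Lemma~A.24; your sandwich bound on $(1+\nabla_q^*\nabla_q)^{-1/2}\,\partial^2_{q_iq_i}U\,(1+\nabla_q^*\nabla_q)^{-1/2}$ does close the argument, but note that the required control $\int |\nabla U|^2 h^2\,\dd\pi_q \leq C(\|h\|^2+\|\nabla h\|^2)$ is not a consequence of the $H^2$-regularity of the resolvent as you phrase it; it is precisely the Villani A.24 inequality, which follows directly from the Hessian bound in~\eqref{eq:regularization condition}, so the ingredient is available under the stated hypotheses and your proof stands with that attribution corrected. What your version buys is a self-contained spectral-calculus treatment that makes the scaling in~$\varepsilon$ transparent term by term; what the paper's factorization buys is shorter bookkeeping, since the commutators $[\partial_{q_i},\partial_{q_i}^*]=\beta\,\partial^2_{q_iq_i}U$ and $[\partial_\xi,\partial_\xi^*]=\beta$ never have to be produced explicitly.
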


\begin{proof}
  Consider for instance $\partial_{q_i,q_j}^2(1-P_q)\Lfd$. It is sufficient by Lemma~\ref{lem:estimation_inverses_G_eps} to prove that $\partial_{q_i,q_j}^2 (1-P_q)(1+\Pi\nabla_q^*\nabla_q\Pi)^{-1}$ is bounded, and in fact that operators of the form~$\mathcal{T}_i = \partial_{q_i}(1-P_q)(1+\Pi\partial_{q_i}^*\partial_{q_i}\Pi)^{-1/2}$ and $\partial_{q_i}^2(1-P_q)(1+\Pi\nabla_q^*\nabla_q\Pi)^{-1}$ are bounded. The first statement is clear by calculating $\mathcal{T}_i^*\mathcal{T}_i$ and using spectral calculus; while for the second one we use~\cite[Section~3]{Dolbeault2015}. Similar reasonings can be used to bound $\partial_{q_i}^*\partial_{q_j}(1-P_q)\Lfd$. Bounds on $\varepsilon^{-2}\partial_\xi^2(1-P_\xi)\Lfd$, $\varepsilon^{-2}\partial_\xi^*\partial_\xi(1-P_\xi)\Lfd$ are obtained in a similar way, considering the specific case of quadratic potentials in~$\xi$ (so that estimates similar to those of~\cite[Section~3]{Dolbeault2015} hold in the~$\xi$ variable).

  Consider next $\varepsilon^{-1}\partial_{\xi}(1-P_\xi)\partial_{q_i}^*(1-P_q)\Lfd = \mathcal{T}_\xi R_{q_i} S_{q,\xi}$ with
  \[
    S_{q,\xi} = \varepsilon^{-1}(1-P_\xi)(1+\Pi\partial_\xi^*\partial_\xi\Pi)^{1/2}(1-P_q)(1+\Pi\nabla_q^* \nabla_q\Pi)^{1/2}\Lfd
  \]
  uniformly bounded in~$\varepsilon$ by Lemma~\ref{lem:estimation_inverses_G_eps}, $\mathcal{T}_\xi = \partial_{\xi}(1-P_\xi)(1+\Pi\partial_\xi^*\partial_\xi\Pi)^{-1/2}$ bounded by considering $\mathcal{T}_\xi^* \mathcal{T}_\xi$ and resorting to spectral calculus, and $R_{q_i} = \partial_{q_i}^*(1-P_q)(1+\Pi\nabla_q^* \nabla_q\Pi)^{-1/2}$. To prove that the latter operator is bounded, we write it as the sum of $-\partial_{q_i}(1-P_q)(1+\Pi\nabla_q^* \nabla_q\Pi)^{-1/2}$ (which is bounded by the same reasoning as the one used to prove that $\mathcal{T}_\xi$ is bounded) and $\beta \partial_{q_i}V (1-P_q)(1+\Pi\nabla_q^* \nabla_q\Pi)^{-1/2}$, which is bounded in view of the inequality
  \[
    \| |\nabla V|h \|_{L^2(\pi_q)} \leq C \left( \|h\|_{L^2(\pi_q)} + \| \nabla h \|_{L^2(\pi_q)}\right)
  \]
  provided by~\cite[Lemma~A.24]{Villani2009}. The boundedness of $\varepsilon^{-1} \partial_{q_i}(1-P_q)\partial_\xi(1-P_\xi)\Lfd$ and $\varepsilon^{-1} \partial_{q_i}(1-P_q)\partial_\xi^*(1-P_\xi)\Lfd$ follows by similar arguments.
\end{proof}

The proof of the following lemma is obtained by straightforward computations based on integration by parts in the integral involved in the definition of~$\Pi$.

\begin{lemma}
  \label{lem:Pi_op_p}
  For any $\alpha_1,\alpha_2,\alpha_3 \in \mathbb{N}$ and $i,j,k \in \{1,\dots,n\}$, the operators $\Pi \partial_{p_i}^{\alpha_1} \left(\partial_{p_j}^*\right)^{\alpha_2} \partial_{p_k}^{\alpha_3}$ are bounded (and so are their adjoints on~$L^2(\pi_p)$ and~$L^2(\pi)$). In particular, $\partial_{p_i}^* \partial_{p_j}^* \Pi$ and $\partial_{p_i} \partial_{p_j}^* \Pi$ are bounded.
\end{lemma}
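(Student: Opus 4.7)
The projector $\Pi$ is the orthogonal projection of $L^2(\pi)$ onto the subspace of functions independent of~$\p$; in particular it is self-adjoint, and $\Pi\phi$ depends only on $(\q,\xi)$ for any $\phi\in L^2(\pi)$. The plan is to transfer all $\p$-derivatives off the input function via duality, so that what remains is integration of the input against a polynomial in~$\p$ that is square-integrable against the Gaussian marginal~$\pi_p$. This is the ``integration by parts'' alluded to in the statement, rephrased as a duality in $L^2(\pi)$.

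Concretely, for $\psi,\phi\in C_0^\infty(\xDomain)$, self-adjointness of $\Pi$ yields
\[
  \innerLpi{\Pi\,\partial_{p_i}^{\alpha_1}(\partial_{p_j}^*)^{\alpha_2}\partial_{p_k}^{\alpha_3}\psi,\phi} = \innerLpi{\psi,(\partial_{p_k}^*)^{\alpha_3}\partial_{p_j}^{\alpha_2}(\partial_{p_i}^*)^{\alpha_1}\Pi\phi}.
\]
Each of the operators $\partial_{p_\ell}$ and $\partial_{p_\ell}^* = -\partial_{p_\ell}+\beta p_\ell$ sends a function of the form $R(\p)\,s(\q,\xi)$ with $R$ polynomial in~$\p$ into another function of the same form (replacing $R$ by $\partial_{p_\ell}R$ or by $-\partial_{p_\ell}R+\beta p_\ell R$). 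Since $\Pi\phi = 1\cdot\Pi\phi$ is already of this form, a straightforward induction on $\alpha_1+\alpha_2+\alpha_3$ shows that
\[
  (\partial_{p_k}^*)^{\alpha_3}\partial_{p_j}^{\alpha_2}(\partial_{p_i}^*)^{\alpha_1}\Pi\phi = Q(\p)\,\Pi\phi
\]
for a polynomial $Q$ depending only on $\alpha_1,\alpha_2,\alpha_3$ and the indices $i,j,k$. Since $\pi_p$ is Gaussian we have $Q\in L^2(\pi_p)$, and the factorization $\|Q\,\Pi\phi\|_{L^2(\pi)} = \|Q\|_{L^2(\pi_p)}\|\Pi\phi\|_{L^2(\pi_q\pi_\xi)} \leq \|Q\|_{L^2(\pi_p)}\normLpi{\phi}$ combined with Cauchy--Schwarz gives
\[
  \bigl|\innerLpi{\Pi\,\partial_{p_i}^{\alpha_1}(\partial_{p_j}^*)^{\alpha_2}\partial_{p_k}^{\alpha_3}\psi,\phi}\bigr| \leq \|Q\|_{L^2(\pi_p)}\normLpi{\psi}\normLpi{\phi}.
\]
Extending by density establishes boundedness on $L^2(\pi)$; the same argument restricted to functions of $\p$ only handles the $L^2(\pi_p)$ case; and adjoints of bounded operators have the same operator norm, which takes care of the parenthetical statement.

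For the two operators highlighted, $\partial_{p_i}^*\partial_{p_j}^*\Pi$ is the $L^2(\pi)$-adjoint of $\Pi\,\partial_{p_j}\partial_{p_i}$ (the case $\alpha_1=\alpha_3=1$, $\alpha_2=0$) and is therefore bounded. For $\partial_{p_i}\partial_{p_j}^*\Pi$ an even simpler direct computation suffices: since $\Pi\psi$ is $\p$-independent, $\partial_{p_j}^*\Pi\psi = \beta p_j\Pi\psi$ and then $\partial_{p_i}(\beta p_j\Pi\psi) = \beta\delta_{ij}\Pi\psi$, so $\partial_{p_i}\partial_{p_j}^*\Pi = \beta\delta_{ij}\Pi$ is a bounded multiple of a projection. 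There is no real obstacle here; the whole argument is integration by parts plus Cauchy--Schwarz, and the only point requiring care is the ordering of elementary operators when dualizing, which is taken care of automatically by working inside the scalar product pairing from the outset.
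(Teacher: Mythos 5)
Your argument is correct and is essentially the paper's own (the paper only remarks that the lemma follows from straightforward integration by parts in the integral defining $\Pi$): dualizing against $\Pi\phi$ and letting the elementary operators act on a $\p$-independent function is exactly that integration by parts, reducing everything to Gaussian moments of a fixed polynomial $Q$ plus Cauchy--Schwarz. The explicit computation $\partial_{p_i}\partial_{p_j}^*\Pi=\beta\delta_{ij}\Pi$ also matches the rules~\eqref{eq:rules_derivatives_p_Pi} already recorded in the paper, so no gap remains.
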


\section{Pathwise ergodicity and functional central limit theorem}
\label{sec:CLT}

Consider, for $\varphi \in L^{1}(\pi)$ given, the trajectory average of $\varphi$ evaluated along a realization of the solution of the SDE~\cref{eq:adL}:
\begin{equation}
  \label{eq:estimator}
\widehat{\varphi}_{t} := \frac{1}{t}\int_{0}^{t} \varphi(\q_s,\p_s,\xi_s) \, \dd s.
\end{equation}
The almost-sure convergence of this estimator to $ \EE_{\pi}(\varphi)$ holds by the results of~\cite{kliemann1987recurrence} since the dynamics admits an invariant probability measure with a positive density, and the generator is hypoelliptic~\cite{Hor67}. The latter property follows from the following computations on commutators: $[\Lh,\partial_{p_i}] = -\partial_{q_i}$, and $[\Lnh,\partial_{p_i}] = -2p_i\partial_{\xi}+\xi \partial_{p_i}$ so that $[[\Lnh,\partial_{p_i}],\partial_{p_i}] = 2\partial_\xi$.

In fact, by the results from~\cite{Bhattacharya1982}, a natural central limit theorem is a consequence of the boundedness of the inverse of the generator obtained in~\cref{col:invert}.

\begin{collorary}[Central limit theorem for AdL]\label{col:clt}
Consider $\varphi \in L^{2}(\pi)$. Then 
\begin{equation}\label{eq:clt}
\sqrt{t} \left ( \widehat{\varphi}_{t}  -  \EE_{\pi} \varphi \right ) \xrightarrow[t \to +\infty]{\mathrm{law}}  \mathcal{N}(0,{\sigma}_{\varepsilon,\gamma}^{2}(\varphi)),
\end{equation}
where the asymptotic variance reads
\[
  {\sigma}_{\varepsilon,\gamma}^{2}(\varphi) =   2 \int_{\xDomain}   \left ( -\Ladl^{-1}\Pi_{0} \varphi \right )\Pi_{0} \varphi \, \dd \pi. 
\]
\end{collorary}

\cref{col:invert} provides the following bounds on the asymptotic variance:
\begin{equation}
\label{eq:scaling_asym_var}
0 \leq {\sigma}_{\varepsilon,\gamma}^{2}(\varphi) \leq  \frac{ 2\const \|\varphi\|_{L^2(\pi)}^2}{\overline{\lambda}} \max\left(\gamma,\frac{\varepsilon^2}{\gamma},\gamma\varepsilon^2,\frac{1}{\gamma\varepsilon^2} \right).
\end{equation}
This inequality shows that integration times of order $t = \tau \max\left(\gamma,\gamma^{-1}\varepsilon^2,\gamma\varepsilon^2,(\gamma\varepsilon^2)^{-1} \right)$ should be considered in order for the estimator~\eqref{eq:estimator} to have a variance of order~$1/\tau$.

\subsection{Langevin limit $\varepsilon \to +\infty$}
\label{sec:Langevin_limit}

We consider in this section the convergence of the asymptotic variance in the limit when $\varepsilon \to +\infty$, which should be thought of as being somewhat similar to overdamped limits of Langevin dynamics. We do not consider the regime $\varepsilon \to 0$ which is a mathematically a singular limit (see however Remark~\ref{rmk:vanishing_eps_limit} below), and is also not a regime which is numerically convenient because of the stiffness of the resulting dynamics, which typically calls for integration schemes with timesteps of order~$\varepsilon$ (or the construction of dedicated numerical schemes based on averaging ideas for instance).

In the limit $\varepsilon \to +\infty$, for a given test function $\varphi \in C^\infty_0(\xDomain)$, the function $\Ladl \varphi$ converges to $\Llang\varphi$ where $\Llang = \Lh + \gamma \Lo$ is the generator of the standard underdamped Langevin dynamics. To understand the behavior of the limiting asymptotic variance, we restrict ourselves to functions of $(\q,\p)$ only, since the variable~$\xi$ evolves very slowly and should therefore not be of interest. Since the slow convergence to equilibrium is due to the relaxation of the~$\xi$ variable in the regime $\varepsilon \to +\infty$, we expect that restricting the attention to such observables allows the variance to remain bounded. In fact, the following result holds (see Section~\ref{sec:proof_asymptotic_analysis} for the proof).

\begin{proposition}
  \label{prop:asymptotic_analysis}
  Fix $\gamma > 0$. Assume that $U$ satisfies Assumption~\ref{as:poincare:U}, is semi-convex (there exists a bounded smooth function $U_1$ with bounded derivatives and a smooth convex function $U_2$ such that $U = U_1+U_2$), grows at most polynomially at infinity and its derivatives as well, and that there exist $K>0$,$R\in \mathbb{R}$ and $a \in (0,1)$ such that 
  \[
   \frac12 \q^T \nabla U(\q) \geq a U(\q) + \gamma^2 \frac{a(2-a)}{8(1-a)}|\q|^2-K, \qquad U(\q) \geq R |\q|^2.
  \]
  Consider a smooth function $\varphi = \varphi(\q,\p)$ growing at most polynomially in~$(\q,\p)$ and whose derivatives grow at most polynomially. Then there exists $C>0$ (depending on~$\gamma$, $\varphi$) such that the asymptotic variance ${\sigma}_{\varepsilon,\gamma}^{2}(\varphi)$ defined in Corollary~\ref{col:clt} satisfies
  \[
    \forall \varepsilon \geq 1, \qquad \left|{\sigma}_{\varepsilon,\gamma}^{2}(\varphi)-{\sigma}_{\infty,\gamma}^{2}(\varphi)\right| \leq \frac{C}{\varepsilon},
  \]
  where ${\sigma}_{\infty,\gamma}^{2}(\varphi)$ involves only asymptotic variances of underdamped Langevin dynamics. More precisely,
  \[
    {\sigma}_{\infty,\gamma}^{2}(\varphi) = \frac{2}{\beta}\left(\gamma\|\nabla_p \Phi_0\|^2_{L^2(\pi_q\pi_p)} - \gamma\frac{\langle \nabla_p \Phi_{-1}, \nabla_p \Phi_0\rangle_{L^2(\pi_q\pi_p)}^2}{\|\nabla_p \Phi_{-1}\|^2_{L^2(\pi_q\pi_p)}} + \frac{ \beta^2 \langle \Phi_{-1},\Lh \Phi_0\rangle_{L^2(\pi_q\pi_p)}^2}{\gamma \|\nabla_p \Phi_{-1}\|^2_{L^2(\pi_q\pi_p)}}\right)
  \]
  where $\Phi_0 = -\Llang^{-1}\Pi_0\varphi$ and $\Phi_{-1} = -\Llang^{-1}\left(\p^2 -\frac{n}{\beta}\right)$.
\end{proposition}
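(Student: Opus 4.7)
The problem is a singular slow/fast perturbation: write $\Ladl = \Llang + \varepsilon^{-1}\Lnh$ and note that $\Llang$ acts trivially in $\xi$, so that its kernel on $L^2_0(\pi)$ is the nontrivial slow subspace $L^2_0(\pi_\xi)$ of functions of $\xi$ alone---this is precisely what produces the $(\gamma\varepsilon^2)^{-1}$ term in the spectral gap. Since $\Pi_0\varphi$ depends only on $(\q,\p)$ with zero mean and is thus orthogonal to this slow subspace, I look for an asymptotic expansion
\[
  \Phi_\varepsilon := -\Ladl^{-1}\Pi_0\varphi = \varepsilon \Phi^{(-1)}(\xi) + \Phi^{(0)}(\q,\p,\xi) + \varepsilon^{-1}\Phi^{(1)}(\q,\p,\xi) + \cdots
\]
whose leading term lies in $\ker \Llang$ (but will produce zero inner product with $\Pi_0\varphi$, so the variance remains bounded as $\varepsilon\to\infty$).

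\textbf{Identifying the expansion.} Substituting the ansatz into $-\Ladl\Phi_\varepsilon = \Pi_0\varphi$, the order-$\varepsilon^0$ equation $-\Llang\Phi^{(0)} = \Pi_0\varphi + (|\p|^2-n/\beta)(\Phi^{(-1)})'(\xi)$ is solved fibrewise in $\xi$ using $\Phi_0 = -\Llang^{-1}\Pi_0\varphi$ and $\Phi_{-1} = -\Llang^{-1}(|\p|^2-n/\beta)$:
\[
  \Phi^{(0)} = \Phi_0 - (\Phi^{(-1)})'(\xi)\,\Phi_{-1} + f_0(\xi).
\]
The order-$\varepsilon^{-1}$ solvability condition $\Pi P_q \Lnh \Phi^{(0)} = 0$ reduces, after integration by parts against $\pi_p$ and use of the identity $\int \p^T \nabla_p g\,\dd\pi_p = \beta\int(|\p|^2 - n/\beta) g\,\dd\pi_p$, to the linear ODE
\[
  -a\,(\Phi^{(-1)})''(\xi) + a\beta\xi\,(\Phi^{(-1)})'(\xi) = \xi\, b_0,
\]
with $a = \frac{\gamma}{\beta}\|\nabla_p \Phi_{-1}\|^2_{L^2(\pi_q\pi_p)}$ and $b_0 = \beta\langle\Phi_{-1},\Lh\Phi_0\rangle + \gamma\langle\nabla_p\Phi_{-1},\nabla_p\Phi_0\rangle$. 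Since the homogeneous solution $Ce^{\beta\xi^2/2}$ lies in $L^2(\pi_\xi)$ only for $C=0$, the unique admissible solution is $(\Phi^{(-1)})' \equiv K := b_0/(a\beta)$.

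\textbf{Leading-order variance.} Both $\langle \varepsilon\Phi^{(-1)}(\xi),\Pi_0\varphi\rangle$ and $\langle f_0(\xi),\Pi_0\varphi\rangle$ vanish because $\Pi_0\varphi$ depends only on $(\q,\p)$ with zero mean. Hence the leading part of $\sigma^2_{\varepsilon,\gamma}(\varphi)/2$ is $\langle \Phi_0 - K\Phi_{-1},\Pi_0\varphi\rangle = \langle \Phi_0 - K\Phi_{-1},-\Llang\Phi_0\rangle$. Splitting $\Llang = \Lh + \gamma\Lo$, using antisymmetry of $\Lh$ and $-\Lo = \beta^{-1}\nabla_p^*\nabla_p$, and expanding the product $(\beta X+\gamma Y)(-X+\gamma\beta^{-1}Y)$ with $X=\langle\Phi_{-1},\Lh\Phi_0\rangle$ and $Y=\langle\nabla_p\Phi_{-1},\nabla_p\Phi_0\rangle$, the cross term $XY$ cancels exactly and the remaining expression is precisely the announced $\sigma^2_{\infty,\gamma}(\varphi)$.

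\textbf{Main obstacle: quantitative remainder.} Set $R_\varepsilon := \Phi_\varepsilon - \varepsilon\Phi^{(-1)} - \Phi^{(0)} - \varepsilon^{-1}\Phi^{(1)}$, where $\Phi^{(1)}$ and $f_0$ are fixed by the solvability condition at order $\varepsilon^{-2}$. Then $-\Ladl R_\varepsilon = \varepsilon^{-2}\Lnh\Phi^{(1)}$, but the $O(\varepsilon^2)$ resolvent bound of Corollary~\ref{col:invert} would only yield $\|R_\varepsilon\| = O(1)$, which is insufficient. The key observation is that by construction $\Lnh\Phi^{(1)}$ belongs to $(I-\Pi P_q)L^2(\pi)$---the fast subspace transverse to $\ker\Llang$---and on this subspace one expects $\Ladl^{-1}$ to be bounded uniformly in $\varepsilon \geq 1$; this should be established by rerunning the DMS argument of Proposition~\ref{prop:coercivity} while dropping the regularization factor in the $\xi$-direction responsible for the $(\gamma\varepsilon^2)^{-1}$ term. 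The assumptions on $U$ (semi-convexity, polynomial growth of $U$ and its derivatives, the Lyapunov-type inequality on $\q^T\nabla U$) enter here to ensure that $\Phi_0,\Phi_{-1},\Phi^{(1)}$ and their first derivatives lie in the weighted Sobolev spaces required to make $\Lnh\Phi^{(1)}\in L^2(\pi)$ with norm independent of $\varepsilon$.
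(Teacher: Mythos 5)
Your construction of the expansion and your identification of the limit are essentially those of the paper: the hierarchy in powers of $\varepsilon$, the solvability conditions with respect to the effective Ornstein--Uhlenbeck operator in~$\xi$, the constants $a=\gamma\beta^{-1}\|\nabla_p\Phi_{-1}\|^2_{L^2(\pi_q\pi_p)}$ and $b_0=\beta\langle\Phi_{-1},\Lh\Phi_0\rangle+\gamma\langle\nabla_p\Phi_{-1},\nabla_p\Phi_0\rangle$, and the final algebra all match (your sign convention for $\Phi^{(0)}$ differs from the paper's $\Psi_0=\Phi_0+f_{-1}'\Phi_{-1}+f_0$, but it is self-consistent and yields the same combination $\Phi_0-\tfrac{b_0}{a\beta}\Phi_{-1}$); the cancellation of the cross term producing $\sigma^2_{\infty,\gamma}(\varphi)$ is correct.

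The genuine gap is the remainder estimate. You truncate after $\varepsilon^{-1}\Phi^{(1)}$, correctly note that Corollary~\ref{col:invert} then only gives $\|R_\varepsilon\|_{L^2(\pi)}=O(1)$, and close the argument by asserting that $\Ladl^{-1}$ is uniformly bounded in $\varepsilon\geq 1$ on $(I-\Pi P_q)L^2_0(\pi)$, to be ``established by rerunning'' Proposition~\ref{prop:coercivity} without the $\xi$-regularization. This is not a proof, and the claim is delicate: the subspace $(I-\Pi P_q)L^2_0(\pi)$ is not invariant under $\Ladl^{-1}$, so knowing that the source term $\Lnh\Phi^{(1)}$ lies in it does not allow you to apply an improved coercivity estimate to $u=\Ladl^{-1}\bigl(\Lnh\Phi^{(1)}\bigr)$, which in general has a nonzero component along functions of~$\xi$; the hypocoercivity argument controls $\|u\|$ only through the global gap, i.e. through the very $(\gamma\varepsilon^2)^{-1}$ factor you are trying to avoid. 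The paper sidesteps this entirely by pushing the expansion one order further: it constructs $\Psi_2$ (your $\Phi^{(2)}$), sets $f_1=f_2=0$, so that the residual is $\varepsilon^{-3}\Lnh\Psi_2$ with $\Psi_2$ independent of $\varepsilon$; then the crude $O(\varepsilon^2)$ resolvent bound of Corollary~\ref{col:invert} already yields $\|\phi_\varepsilon-\varepsilon\Psi_{-1}-\Psi_0\|_{L^2(\pi)}=O(1/\varepsilon)$ and hence the stated $C/\varepsilon$ bound on the variances. (The fact that the $\Psi_i$ and $\Lnh\Psi_2$ belong to $L^2(\pi)$ is exactly what the additional hypotheses on~$U$ provide, via the quoted results guaranteeing that $\Llang^{-1}$ preserves smooth functions of $(\q,\p)$ with polynomially growing derivatives; you gesture at this but it needs that reference rather than an appeal to unspecified weighted Sobolev spaces.) So either prove the uniform fast-subspace resolvent bound---a nontrivial new statement---or extend the expansion by one term as the paper does.
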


Note that the first term on the right-hand side of the expression of $ {\sigma}_{\infty,\gamma}^{2}(\varphi)$ corresponds to the asymptotic variance of a standard underdamped Langevin dynamics. The Nos\'e--Hoover like thermostat adds two terms in the large $\varepsilon$ limit, one nonpositive and one nonnegative, so that it is not clear in general whether  $ {\sigma}_{\infty,\gamma}^{2}(\varphi)$ is larger than $2\gamma\beta^{-1}\|\nabla_p \Phi_0\|^2_{L^2(\pi_q\pi_p)}$. Overall, it however still holds ${\sigma}_{\infty,\gamma}^{2}(\varphi) \geq 0$ as expected since a Cauchy-Schwarz inequality shows that the sum of the two first terms in the brackets on the right-hand side is indeed nonnegative. 

The extra conditions on the potential, taken from~\cite{KopecLangevin}, are satisfied for potentials growing at infinity as $|\q|^\alpha$ with $\alpha>2$. They ensure that $\Llang^{-1}$ stabilizes the vector space of smooth functions of~$(\q,\p)$ with mean zero with respect to~$\pi_q \, \pi_p$, growing at most polynomially at infinity, and whose derivatives grow at most polynomially at infinity. 

It is in fact possible to write an expansion in inverse powers of~$\varepsilon$ for the difference ${\sigma}_{\varepsilon,\gamma}^{2}(\varphi)-{\sigma}_{\infty,\gamma}^{2}(\varphi)$, and in particular to make precise the leading order term in this expansion. We however refrain from doing so because the expressions are cumbersome. Note also that the proof of Proposition~\ref{prop:asymptotic_analysis} allows to write the action of $\Ladl^{-1}$ on $L^2_0(\pi)$ at leading order~$\varepsilon^{-2}$ (in a similar fashion to the results presented in~\cite[Theorem~2.5]{LeMaSt2015}, which provides an expansion of the resolvent of the generator of the underdamped Langevin dynamics in inverse powers of~$\gamma$); see Remark~\ref{rmk:leading_order_resolvent_AdL}.

\begin{remark}
  \label{rmk:vanishing_eps_limit}
  In the limit $\varepsilon\to 0$, the dynamics~\eqref{eq:adL} behaves at dominant order as the following ordinary differential equation:
  \[
    \begin{aligned}
      \dd \q &= 0,\\
      \dd \p &= - \frac{\xi}{\varepsilon} \p \, \dd t, \\
      \dd \xi &= \frac{1}{\varepsilon}\left ( |\p|^2 - \frac{n}{\beta} \right ) \dd t.
    \end{aligned}
  \]
  The only equilibrium points correspond to $|\p|^2 = n \beta^{-1}$ and $\xi=0$. A simple computation shows that
  \[
    \Phi(q,\p,\xi) = \xi^2 + |\p|^2 - \frac{2n}{\beta} \ln |\p|^2
  \]
  is an invariant of the dynamics. It is therefore expected that~\eqref{eq:adL} corresponds to a fast averaging on the level sets of~$\Phi$, with a superimposed slow variation of the values of~$\Phi$ induced by the Langevin part of the dynamics. Since the dynamics is at leading order a dynamics on the two one-dimensional variables~$P = |\p|^2$ and~$\xi$ only, it might be possible to adapt the techniques from~\cite{PV08} in order to determine the dominant behavior of the asymptotic variance in the regime~$\varepsilon \to 0$.
\end{remark}

\subsection{Proof of Proposition~\ref{prop:asymptotic_analysis}}
\label{sec:proof_asymptotic_analysis}

The idea of the proof is to construct an approximate solution $\psi_\varepsilon$ to the Poisson equation $-\Ladl \phi_\varepsilon = \Pi_0 \varphi$, using asymptotic analysis. The scaling of the resolvent $-\Ladl$ as given by Corollary~\ref{col:invert} suggests that, in the limit $\varepsilon\to+\infty$,  
\begin{equation}
  \label{eq:formal_expansion}
  \psi_\varepsilon = \varepsilon^2 \Psi_{-2} + \varepsilon \Psi_{-1} + \Psi_0 + \varepsilon^{-1} \Psi_1 + ...
\end{equation}
The various functions in~\eqref{eq:formal_expansion} formally satisfy, by identifying powers of~$\varepsilon$,
\[
  \begin{aligned}
    -\Llang \Psi_{-2} & = 0, \qquad -\Llang \Psi_{-1} = \Lnh \Psi_{-2}, \qquad -\Llang \Psi_0 = \Pi_0 \varphi + \Lnh \Psi_{-1}, \\
    -\Llang \Psi_{i} & = \Lnh \Psi_{i-1} \quad \textrm{ for } i \geq 1.  
  \end{aligned}
\]
The strategy of the proof is to construct the leading order terms~$\Psi_{-2},\Psi_{-1},\dots,\Psi_{2} \in L^2_0(\pi)$ in order to obtain some approximate solution~$\psi_\varepsilon$ (obtained by a truncation of~\eqref{eq:formal_expansion}), and then to use resolvent estimates to conclude that $\phi_\varepsilon-\psi_\varepsilon$ is small.  

We will  repeatedly use the fact that the unique solution~$G$ of $-\Llang G = g$ for $g$ a smooth function with average~0 with respect to~$\pi_p(\dd \p)\pi_q(\dd \q)$ growing at most polynomially at infinity and whose derivatives also grow at most polynomially at infinity, is a well defined smooth function, which grows at most polynomially at infinity and whose derivatives also grow at most polynomially at infinity (by the results of~\cite{KopecLangevin}). 

\paragraph{Construction of the leading order terms in the expansion}
The equation $-\Llang \Psi_{-2} = 0$ shows that $\Psi_{-2}(\q,\p,\xi) = f_{-2}(\xi)$. Next, $-\Llang \Psi_{-1} = \Lnh \Psi_{-2} = (\p^2 -n\beta^{-1})f_{-2}'(\xi)$, so that
\[
  \Psi_{-1}(\q,\p,\xi) = f_{-2}'(\xi)\Phi_{-1}(\q,\p) + f_{-1}(\xi), \qquad \Phi_{-1}(\q,\p) = -\Llang^{-1}\left(\p^2 -\frac{n}{\beta}\right).
\]
The equation for $\Psi_0$ then reads
\[
-\Llang \Psi_0 = \Pi_0 \varphi + f_{-2}''(\xi)\left(\p^2 -\frac{n}{\beta}\right)\Phi_{-1} + f_{-1}'(\xi) \left(\p^2 -\frac{n}{\beta}\right) - \xi f_{-2}'(\xi) \p^T \nabla_p \Phi_{-1}.
\]
The solvability condition for this equation is that the right-hand side has average~0 with respect to the probability measure~$\pi_q(\dd \q)\pi_p(\dd \p)$.  Integration by parts shows that, for any test function~$\phi$,  
\[
\int_{\mathbb{R}^n} \p^T \nabla_p \phi \, \dd \pi_p = \beta \int_{\mathbb{R}^n} \phi\left(\p^2 -\frac{n}{\beta}\right)  \dd \pi_p,
\]
so that the solvability condition reads
\begin{equation}
  \label{eq:solvability_f_-2}
  a\mathcal{L}_{{\rm eff},\xi} f_{-2} = -\int_{\mathbb{R}^{2n}} \Pi_0 \varphi \, \dd\pi_p \, \dd \pi_q = 0, \qquad a = \int_{\mathbb{R}^{2n}} \left(\p^2 -\frac{n}{\beta}\right)\Phi_{-1} \, \dd \pi_p \, \dd \pi_q \geq 0,
\end{equation}
where $\mathcal{L}_{{\rm eff},\xi}$ is the generator of an effective Ornstein--Uhlenbeck process acting on functions $u = u(\xi)$ as $\mathcal{L}_{{\rm eff},\xi}u = u'' - \beta \xi u'$. 
In fact $a > 0$ since $a = \gamma\beta^{-1}\|\nabla_p \Phi_{-1}\|^2 = 0$ would imply that $\Phi_{-1}$ is constant in $\p$, which is in contradiction to the definition of $\Phi_{-1}$ because
\[
\left(-\Llang\Phi_{-1}\right)(\q,\p)= \p^{\trans}  \cdot \nabla_{q}\Phi_{-1}(\q) \neq   \left(\p^2 -\frac{n}{\beta}\right).
\]
The fact that $a$ is nonzero implies that the first equality in \eqref{eq:solvability_f_-2} holds if and only if $f_{-2} = 0$, so that $\Psi_{-2} = 0$ and $\Psi_{-1} = f_{-1}$. Moreover,
\[
  \Psi_0(\q,\p,\xi) = \Phi_0(\q,\p) + f_{-1}'(\xi) \Phi_{-1}(\q,\p) + f_0(\xi), \qquad \Phi_0 = -\Llang^{-1}\Pi_0\varphi.
\]

\begin{remark}
  \label{rmk:leading_order_resolvent_AdL}
  The equality~\eqref{eq:solvability_f_-2} shows that the action of leading order of the resolvent for Adaptive Langevin for functions $\varphi \in L^2_0(\pi)$ is $a^{-1} \varepsilon^2 \mathcal{L}^{-1}_{{\rm eff},\xi}\Pi P_q\varphi $ (with $P_q$ defined in~\eqref{eq:def_P_q}). 
\end{remark}

The condition at next order reads
\[
-\Llang \Psi_1 = \Lnh \Psi_0 = -\xi \p^T \nabla_p \Phi_0 - \xi f_{-1}' \p^T \nabla_p \Phi_{-1} + \left(\p^2 -\frac{n}{\beta}\right)\left[f_0' + f_{-1}'' \Phi_{-1}\right].
\]
The solvability condition reads $a \mathcal{L}_{{\rm eff},\xi} f_{-1} = \xi b_0$ with $b_0 = \Pi P_q (\p^T \nabla_p \Phi_0)$, so that $f_{-1}(\xi) = -\xi b_0/(a \beta)$, and
\[
\Psi_1(\q,\p,\xi) = f_0'(\xi) \Phi_{-1}(\q,\p) + \xi \Phi_1(\q,\p) + f_{1}(\xi), \qquad \Phi_1 = -\Llang^{-1}\left(\frac{b_0}{a \beta} \p^T \nabla_p \Phi_{-1}-\p^T \nabla_p \Phi_{0}\right).
\]
Next,
\[
-\Llang \Psi_2 = \Lnh \Psi_1 = - \xi f_0' \p^T \nabla_p \Phi_{-1} - \xi^2 \p^T \nabla_p \Phi_1 + \left(\p^2 -\frac{n}{\beta}\right)\left[\Phi_1 + f_1' + f_0'' \Phi_{-1}\right],
\]
for which the solvability condition reads $a\mathcal{L}_{{\rm eff},\xi} f_0 = (\xi^2-\beta^{-1})b_1$ with $b_1 = \Pi P_q (\p^T \nabla_p \Phi_1)$. Therefore, $f_0(\xi) = (\beta^{-1}-\xi^2)b_1/(2\beta a)$, so that
\[
  \begin{aligned}
    \Psi_2(\q,\p,\xi) & = \Llang^{-1}\left[\left(\p^2 -\frac{n}{\beta}\right)\left(\frac{b_1}{\beta a}\Phi_{-1}-\Phi_1\right)\right] + \xi^2 \Llang^{-1}\left(\p^T \nabla_p \Phi_1-\frac{b_1}{\beta a}\p^T \nabla_p \Phi_{-1}\right) \\
    & \qquad +f_1'(\xi) \Phi_{-1}(\q,\p) + f_2(\xi).
  \end{aligned}
\]

\paragraph{Obtaining bounds on the difference of the variances}
We now choose $f_1 = f_2 = 0$ and compute
\[
  \Ladl\left(\varepsilon \Psi_{-1} + \Psi_0 + \frac{1}{\varepsilon} \Psi_1 + \frac{1}{\varepsilon^2} \Psi_2 - \phi_\varepsilon\right) = \frac{1}{\varepsilon^3} \Lnh \Psi_{2}.
\]
We deduce, in view of Corollary~\ref{col:invert}, that there exists a constant $C_\gamma \in \mathbb{R}_+$ such that, for any $\varepsilon \geq 1$,
\[
\left\|\varepsilon \Psi_{-1} + \Psi_0 + \frac{1}{\varepsilon} \Psi_1 + \frac{1}{\varepsilon^2} \Psi_2 - \phi_\varepsilon\right\|_{L^2(\pi)} \leq \frac{C_\gamma}{\varepsilon} \left\|\Lnh \Psi_{2}\right\|_{L^2(\pi)},
\]
and in fact
\[
\left\|\varepsilon \Psi_{-1} + \Psi_0 - \phi_\varepsilon\right\|_{L^2(\pi)} \leq \frac{R_{\gamma,\varphi}}{\varepsilon}
\]
for some constant $R_{\gamma,\varphi} \in \mathbb{R}_+$. The asymptotic variance ${\sigma}_{\varepsilon,\gamma}^{2}(\varphi)$ then coincides up to an error of order~$\varepsilon^{-1}$ with
\[
  \widetilde{\sigma}_{\varepsilon,\gamma}^{2}(\varphi) =   2 \int_{\xDomain}   \left ( \varepsilon \Psi_{-1} + \Psi_0\right )\Pi_{0} \varphi \, \dd \pi = 2 \int_{\xDomain} \left(\Phi_0 - \frac{b_0}{a\beta} \Phi_{-1}\right)\Pi_{0} \varphi \, \dd \pi,
\]
where we used for the second equality the fact that the average with respect to~$\pi$ of the product of a function of~$\xi$ and~$\Pi_0\varphi$ vanishes. Finally, by integrating in~$\xi$ and expressing $a,b_0$ in terms of the generator of the Langevin dynamics, namely,
\[
  \begin{aligned}
    a & = -\int_{\mathbb{R}^{2n}} \left(\Llang \Phi_{-1}\right) \Phi_{-1} \, d\pi_q\,d\pi_p = \frac{\gamma}{\beta} \|\nabla_p \Phi_{-1}\|^2_{L^2(\pi_q\pi_p)}, \\
    b_0 & = \beta \int_{\mathbb{R}^{2n}} \left(\p^2-\frac{n}{\beta}\right)\Phi_0 \, d\pi_q\,d\pi_p = -\beta \int_{\mathbb{R}^{2n}} \left(\Llang \Phi_{-1}\right) \Phi_0 \, d\pi_q\,d\pi_p
  \end{aligned}
\]
it follows that
\[
  \widetilde{\sigma}_{\varepsilon,\gamma}^{2}(\varphi) =  2 \left(\int_{\mathbb{R}^{2n}} \Phi_0 \Pi_{0} \varphi \, \dd \pi_q \, \dd \pi_q - \frac{\beta \langle \Llang \Phi_{-1},\Phi_0\rangle_{L^2(\pi_q\pi_p)}\langle \Llang \Phi_0,\Phi_{-1}\rangle_{L^2(\pi_q\pi_p)}}{\gamma \|\nabla_p \Phi_{-1}\|^2_{L^2(\pi_q\pi_p)}}\right).
\]
Now,
\[
  \begin{aligned}
    \langle \Llang \Phi_{-1},\Phi_0\rangle_{L^2(\pi_q\pi_p)} & = -\frac\gamma\beta \langle \nabla_p \Phi_{-1}, \nabla_p \Phi_0\rangle_{L^2(\pi_q\pi_p)} - \langle \Phi_{-1},\Lh \Phi_0\rangle_{L^2(\pi_q\pi_p)}, \\
    \langle \Llang \Phi_0,\Phi_{-1}\rangle_{L^2(\pi_q\pi_p)} & = -\frac\gamma\beta \langle \nabla_p \Phi_{-1}, \nabla_p \Phi_0\rangle_{L^2(\pi_q\pi_p)} + \langle \Phi_{-1}, \Lh \Phi_0\rangle_{L^2(\pi_q\pi_p)}, \\
  \end{aligned}
  \]
  so that
  \[
    \widetilde{\sigma}_{\varepsilon,\gamma}^{2}(\varphi) = \frac{2}{\beta}\left(\gamma\|\nabla_p \Phi_0\|^2_{L^2(\pi_q\pi_p)} - \gamma\frac{\langle \nabla_p \Phi_{-1}, \nabla_p \Phi_0\rangle_{L^2(\pi_q\pi_p)}^2}{\|\nabla_p \Phi_{-1}\|^2_{L^2(\pi_q\pi_p)}} + \frac{ \beta^2 \langle \Phi_{-1},\Lh \Phi_0\rangle_{L^2(\pi_q\pi_p)}^2}{\gamma \|\nabla_p \Phi_{-1}\|^2_{L^2(\pi_q\pi_p)}}\right),
  \]
which gives the claimed result.

\section{Numerical results}\label{sec:numerical}

In this section, we present the results of several numerical experiments.  First, we consider a simple illustration to demonstrate the scaling of the spectral gap as a function of~$\gamma$ and~$\varepsilon$ as predicted in Section~\ref{sec:main}. Second, we demonstrate the scaling of the asymptotic variance, as predicted in Section~\ref{sec:CLT}.  We also verify the existence of an asymptotic central limit theorem for the case of a Bayesian data analysis problem.

\subsection{Spectral gap  in Galerkin subspace}
\label{sec:spectral_gap_Galerkin}

Let $U : \RR \rightarrow \RR, \,U(\q) = \frac{1}{2}\q^{2}$. 
Moreover, denote by $\hermite_{l}$ the $l$-th Hermite polynomial as defined in \cref{eq:def:hermite}, 
and consider for prescribed integers $L \in \NN$ the finite dimensional Galerkin subspace $\mathcal{G}_{L}$ spanned by polynomials of the form 
\begin{equation}\label{eq:def:hermite:2}
\psi_{k,l,m}(\p,\xi,\q) = \hermite_{k}(\p) \hermite_{l}(\xi)\hermite_{m}(\q), ~~0 \leq l,k,m \leq L-1,
\end{equation}
and the associated projection operator 
\begin{equation}
 \Pi_{\rm Galerkin}^L  : L^{2}(\pi) \rightarrow \mathcal{G}^{L},~~\varphi ~ \mapsto ~ \sum_{k=0}^{L-1} \sum_{l=0}^{L-1} \sum_{m=0}^{L-1} u_{k,l,m}\psi_{k,l,m},
\end{equation}
where $u_{k,l,m} :=  \innerLpi{\varphi,\psi_{k,l,m}}$. In order to simplify notation we consider a linear indexing of the coefficients $u_{k,l,m}$ and the polynomials $\psi_{k,l,m}$ using a hash map of the form  $I : (k,l,m) \mapsto 1+ m + L k + L^{2} l$ so that we can write the action of the Galerkin operator on functions $\varphi \in L^{2}_{0}(\pi)$ in the compact form
\[
\Pi_{\rm Galerkin}^L \varphi = {\bm u} \cdot {\bm \psi} ,
\]
where ${\bm u} = [\widetilde{u}_{i}]_{1\leq i \leq L^{3}}$ and ${\bm \psi} = [\widetilde{\psi}_{i}]_{1\leq i \leq L^{3}}$, where $\widetilde{u}$ and $\widetilde{\psi}$ are such that $\widetilde{u}_{i} = u_{k,l,m}$ and $\widetilde{\psi}_{i} = \psi_{k,l,m}$ for $i=I(k,l,m)$.

Let $\mathcal{G}^{L}_{0} := \mathcal{G}^{L} \cap L^{2}_{0}(\pi)$. For observables $\varphi \in \mathcal{G}^{L}_{0}$, one can derive (see~\cref{sec:ap:stiffness:matrix}) a stiffness matrix ${\bm A} \in \RR^{L^{3}\times L^{3}}$ in terms of which the action of the generator $\Ladl = \Lh + \gamma \Lo+\varepsilon^{-1}\Lnh$ can  be written as
\begin{equation}
\Ladl\varphi= \Ladl \left ( {\bm u} \cdot {\bm \psi} \right ) = \left ( {\bm A}  {\bm u} \right ) \cdot {\bm \psi}.
\end{equation}
Consequently, the spectrum of $\Ladl$ in the respective Galerkin subspace is exactly given by 
the eigenvalues of ${\bm A}$ and we can numerically compute the spectral gap $  \widehat{\lambda}
_{\varepsilon,\gamma}$ of $-\Ladl$ restrained to the respective Galerkin subspace by 
diagonalizing  the matrix ${\bm A}$. Figure~\ref{fig:sg:hermite} shows the spectral gap of $-{\bm A}$ for $L=10$. As 
suggested by~\eqref{eq:spectral:gap} we observe for all considered values of $\gamma$ a scaling of $  \widehat{\lambda}_{\varepsilon,
\gamma}$ as ${\rm O}(\varepsilon^{2})$ when $\varepsilon \rightarrow 0$ and as ${\rm O}(\varepsilon^{-2})$ when $\varepsilon 
\rightarrow \infty$ (see Figure~\ref{fig:sg:hermite}, Panel A). Similarly, for fixed values of $\varepsilon$ we observe a 
scaling of $  \widehat{\lambda}_{\varepsilon,\gamma}$ as ${\rm O}(\gamma)$ when $\gamma \rightarrow 0$ and as 
${\rm O}(\gamma^{-1})$ when $\gamma \rightarrow \infty$ (see Figure~\ref{fig:sg:hermite}, Panel B). 
Finally, consider the scaling of the spectral gap $ \widehat{\lambda}_{\alpha,\alpha}$ as a function of the single scalar $\alpha$. As $\alpha \rightarrow \infty$, we expect $ \widehat{\lambda}_{\alpha,\alpha} = {\rm O}(\alpha^{-3})$, and as $\alpha \rightarrow 0$, we expect $ \widehat{\lambda}_{\alpha,\alpha} =  {\rm O}(\alpha^{3})$. Indeed, this is what we observe (see Figure~\ref{fig:sg:hermite}, Panel C).

\begin{figure}[ht]
\hspace{-.5cm}
\begin{center}
\includegraphics[width=.9\textwidth]{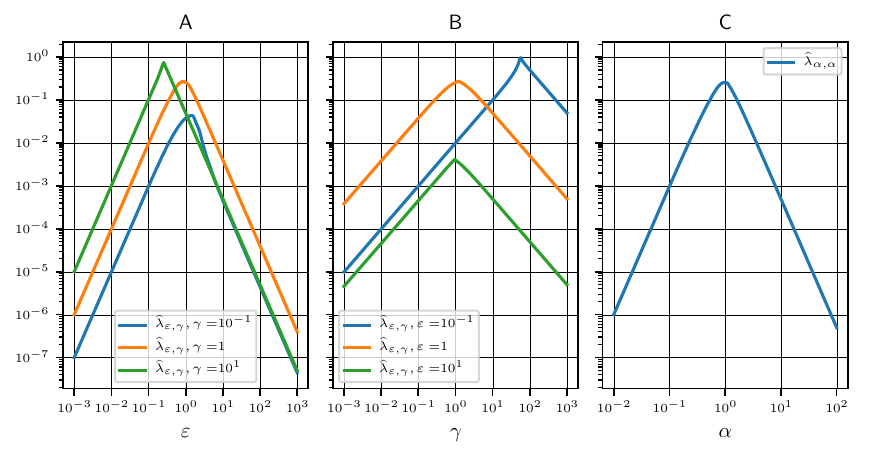}
\caption{Spectral gap, $\widehat{\lambda}
_{\varepsilon,\gamma}$, of $-\Ladl$ when considered as an operator on $\mathcal{G}^{L}_{0}$, with $L=10$. Panel~A shows $  \widehat{\lambda}
_{\varepsilon,\gamma}$ as a function of $\varepsilon$ for fixed $\gamma$. Panel~B shows $  \widehat{\lambda}
_{\varepsilon,\gamma}$ as a function of $\gamma$ for fixed $\varepsilon$. Panel~C shows $  \widehat{\lambda}
_{\alpha,\alpha}$ as a function of the scalar $\alpha$. }\label{fig:sg:hermite}
\end{center}
\end{figure}

\subsection{Scaling of asymptotic variance and demonstration of CLT}
\label{sec:num:asvar:scaling}

We next consider a simple skewed double-well potential $U : \RR \rightarrow \RR$, of the form
\begin{equation}\label{eq:def:dw}
U(\q) = \frac{b}{a} \left (\q^{2}-a\right)^{2}+ c\, \q
\end{equation}
which we parameterize as $b=1, a=1, c=1/2$. We use the BADODAB symmetric splitting scheme from~\cite{Leimkuhler2015} (see also~\cref{sec:num:int}) to simulate trajectories of the SDE \eqref{eq:adL}.
In a first set of simulations we consider different parameterizations with $\varepsilon$ taking values within the interval $[10^{-2},10]$ and $\gamma$ taking values within the interval $[10^{-4},10^{2}]$. For each parameterization we simulate $N=10,000$ independent replicas for $K=100,000$ time steps at unit temperature using a stepsize $\deltat=2\times 10^{-3}$. We randomly initialized each replica according to the associated equilibrium measure $\pi$ using a simple rejection sampling algorithm. We denote by 
\[
\widehat{\varphi}_{K} = \frac{1}{K}\sum_{k=0}^{K-1}\varphi \big( \q^{(k)}, \p^{(k)},\xi^{(k)} \big),
\]
the time average of the  observable $\varphi$  evaluated along a finite trajectory $(\q^{(k)}, \p^{(k)},\xi^{(k)})_{1\leq k \leq K}$ of the discretized process which we use as a (biased, due to discretization) Monte Carlo estimate of the expectation $\EE_{\pi}(\varphi)$. Let $\widehat{\varphi}_{K}^{(n)}$  denote the Monte Carlo estimate obtained from the trajectory of the $n$-th replica, and denote by
\begin{equation}\label{eq:empirical:mean}
\overline{\varphi}_{K}:= \frac{1}{N}\sum_{n=1}^{N} \widehat{\varphi}_{K}^{(n)},
\end{equation}
the empirical mean of the respective estimates over the $N$ independent replicas. We estimate the asymptotic variance of $\varphi$ under the discretized dynamics using
\[
\widehat{\sigma}^{2}_{\varepsilon,\gamma}(K) = \frac{1}{N}\sum_{n=0}^{N-1} \left (\widehat{\varphi}_{K}^{(n)}-\overline{\varphi}_{K}\right )^{2}.
\]
Figure~\ref{fig:fw:asymptotic:variance} shows such computed estimates of the asymptotic variance as a function of $\varepsilon$ (Panel A), and as a function of $\gamma$ (Panel B), respectively. We confirm the qualitative behaviour predicted in Section~\ref{sec:CLT} for the asymptotic variance: for fixed $\gamma=1$, the asymptotic variance $\widehat{\sigma}^{2}_{\varepsilon,\gamma}(K)$ of observables scales at most quadratically in $\varepsilon$ as $\varepsilon\rightarrow \infty$. Similarly, as $\varepsilon\rightarrow 0$, the estimated asymptotic variance $\widehat{\sigma}^{2}_{\varepsilon,\gamma}(K)$ of the observables we consider remains of order~1 (while it could increase as $\varepsilon^{-2}$ at most according to~\eqref{eq:scaling_asym_var}). For fixed $\varepsilon=1$, the estimated asymptotic variance of observables scales as at most linearly in~$\gamma$ as $\gamma\rightarrow \infty$. For the considered model system and observables the increase of the estimated asymptotic variance $\widehat{\sigma}^{2}_{\varepsilon,\gamma}(K)$ is sub-linear in $\gamma^{-1}$ as $\gamma \rightarrow 0$. We provide additional results for a slightly modified version of the model system considered here in \cref{sec:add:numexp}, where the increase of the asymptotic variance of certain observables is indeed  observed to be asymptotically linear in $\gamma^{-1}$ as $\gamma\rightarrow 0$.
\begin{figure}[ht]
  \hspace{-.5cm}
  \begin{center}
\includegraphics[width=.9\textwidth]{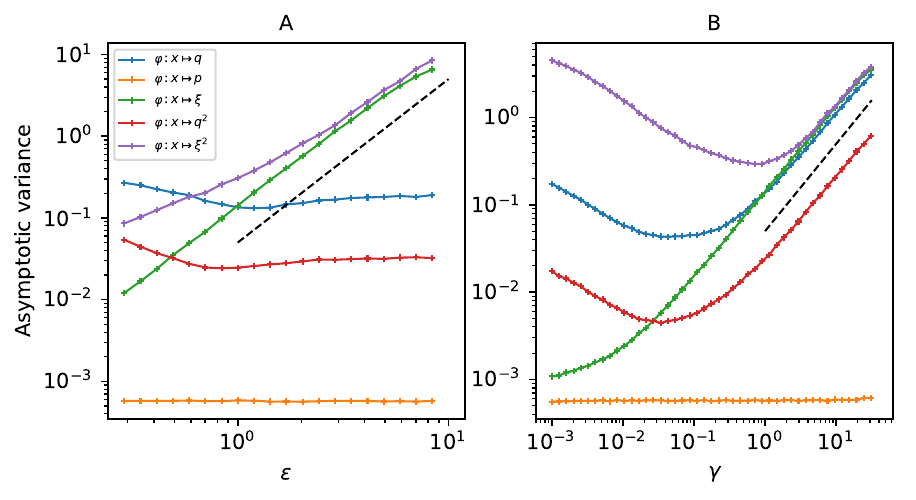}
\end{center}
\caption{Estimated asymptotic variance $\widehat{\sigma}^{2}_{\varepsilon,\gamma}(K)$ for various observables with fixed $\gamma=1$ as a function of $\varepsilon$ (Panel A), and with fixed $\varepsilon=1$ as a function of $\gamma$ (Panel B), respectively. The dashed line in Panel A corresponds to the slope of a quadratic function in $\varepsilon$. The dashed line in Panel B indicates the slope of a linear function in $\gamma$.}\label{fig:fw:asymptotic:variance}
\end{figure}

We use a second set of simulations to demonstrate the central limit theorem obtained in \cref{col:clt} for estimates $\widehat{\varphi}_{K}$ obtained as Monte Carlo estimates from the discretization of the SDE~\eqref{eq:adL}. That is, we show that for sufficiently large $K\in \NN$ the law of the estimated rescaled residual error 
\begin{equation}\label{eq:residual:error}
 \sqrt{ \frac{K\deltat}{\sigma_{\varepsilon,\gamma}^{2}(\varphi)}} (\widehat{\varphi}_{K}-\EE_{\pi}(\varphi)),
\end{equation}
 is approximately Gaussian with vanishing mean and variance $\widehat{\sigma}^{2}_{\varepsilon,\gamma}(\varphi)$ (we treat any systematic bias induced by the discretization as negligible). For parameter values $\gamma=\varepsilon=1$, we simulate $N=500,000$  independent trajectories for up to $K_{\max}=1000$ steps using the stepsize $\deltat = 10^{-1}$. For each trajecotry we compute an estimate of the rescaled residual errors by replacing $\EE_{\pi}(\varphi)$ and $\sigma_{\varepsilon,\gamma}^{2}(\varphi)$ in the expression \eqref{eq:residual:error} by the Monte Carlo estimates $\overline{\varphi}_{K_{\max}}$ and $\widehat{\sigma}^{2}_{\varepsilon,\gamma}(K_{\max})$, respectively.  Figure~\ref{fig:dw:clt}, Panel A and Figure~\ref{fig:dw:clt}, Panel B, show the empirical probability density function of the rescaled residual errors of the estimated mean and the estimated variance of the position variable $\q$, respectively. The empirical probability density functions are plotted for different values of $K$. As $K$ increases we observe that for sufficiently large $K$ the computed empirical probability density functions indeed closely follow the predicted Gaussian limiting distributions.
\begin{figure}[ht]
  \hspace{-.5cm}
  \begin{center}
\includegraphics[width=.9\textwidth]{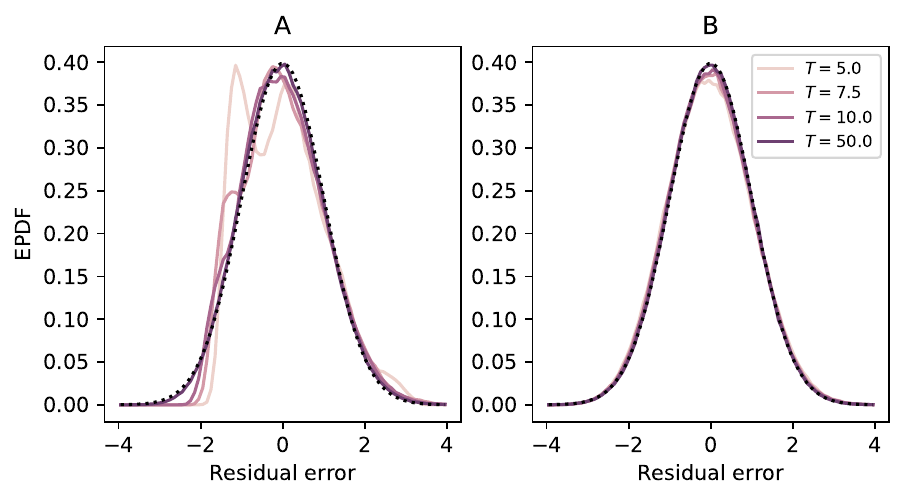}
\end{center}
\caption{Empirical probability distribution (EPDF) of the rescaled residual error at different times $T=K \deltat$. Panel A shows the EPDF of the residual error of the estimated mean of the position variable $\q$, i.e., $\varphi: (\q,\p,\xi) \mapsto \q$. Panel B shows the residual error of the estimated second moment of the position variable, i.e., $\varphi: (\q,\p,\xi) \mapsto \q^2$. Dotted lines show the density $\mathcal{N}(0,\sigma^{2}(\varphi))$, where $\sigma^{2}(\varphi)$ corresponds to the asymptotic variance of the respective observable, which is estimated using the complete trajectory data up to index $K=1000$.  }\label{fig:dw:clt}
\end{figure}

\subsection{Application to Bayesian logistic regression}\label{sec:data}
For the purpose of demonstrating the CLT in a Bayesian posterior sampling application we consider a Bayesian logistic regression trained on a subset of the MNIST benchmark data set \cite{lecun-mnisthandwrittendigit-2010} of handwritten digits for binary classification of the digits 7 and 9. We preprocess the data by means of a principal component analysis. After centering the mean of each pixel, we retain the first 100 principal components and whiten the obtained data by normalizing the variance of the corresponding loadings. The corresponding data points are denoted by $x^j$. Pictures corresponding to the number~7 are associated with $y^j=0$, while $y^j=1$ corresponds to pictures of~9. Training is run on a subset of 12,251 data points and testing on a separate subset of 2000 data points. Assuming a weakly informative Gaussian prior distribution on the parameters $\q \in \mathbb{R}^{100}$ to sample, with density $p_{0}(\q) \propto \exp(-\q^{\trans}\q/(2\sigma^{2}))$ where $\sigma^{2}=100$, and a likelihood 
\[
 p ( y^{j}, x^{j}\given \q) = \frac{ \exp \left ( y^j (x^j)^T \q \right )}{1 + \exp \left ( (x^j)^T \q\right ) },
\]
 where  $\ndata=12251$, $y^{j} \in \{0,1\}$, $x^{j}\in \RR^{100}$, the corresponding posterior distribution is of the form
\begin{align}\label{eq:posterior}
\pi(\q )\,\dd \q & \propto p_0(\q) \prod_{j=1}^{\ndata}  p ( y^{j}, x^{j}\given \q)\,\dd \q =: \exp( -U(\q) ) \,\dd \q.
\end{align}
We use the ODABADO scheme described in Appendix~\ref{sec:num:int} in order to numerically discretize  \eqref{eq:adL:1} in combination with an unbiased estimator $-\widehat{\nabla} U(\q)$ of the gradient force which we obtain by subsampling data points as specified in \eqref{eq:unbiased:estimator} using minibatches of size $m=100$. Besides the introduced gradient noise we do not apply additional random forces, i.e., $\sigma_{{\rm A}} = 0$. 
 


\begin{figure}[ht]
\hspace{-.5cm}
\begin{center}
\includegraphics[width=1.0\textwidth]{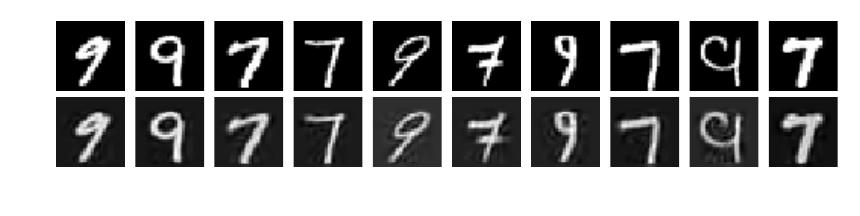}
\end{center}
\caption{Examples of images from the MNIST data set. The upper row shows the original images as obtained from the repository \cite{lecun-mnisthandwrittendigit-2010}. The lower row shows the projection of the same images onto the first 100 principal components which were used for inference in the numerical experiments presented in this article. }\label{fig:mnist:demo}
\end{figure}
In a first set of simulations we generate $N=10,000$ independent trajectories for a total number of $K=10,000$ steps using a stepsize of $\deltat=10^{-2}$ with coupling parameter $\nu=1$. We initialize the position variable of all replicas at the same location which is a point close to the mode of the target distribution, set the initial value $\xi(0)$ of the friction variable to~$0$, and for each trajectory we independently sample the initial momenta from the stationary measure, i.e., $\p(0)\sim \mathcal{N}(\0,\I_{n})$. Following the same steps as described above in the demonstration of the CLT in the previous example we compute the appropriately rescaled residual errors of the estimated mean and the estimated variance at various time points of the single coordinate variable $\q_{i}$ whose index $i=65$ we randomly selected. Figure~\ref{fig:mnist:clt} shows the histograms of the empirical distribution of the residual error of these estimates after an increasing number of time steps. Again, as in the example of the previous section we observe that for a sufficiently large number of time steps, the distribution of the residual error follows closely the anticipated Gaussian distribution. We confirm that we observe that also for other choices of the coordinate index $i$ the empirical law of the residual error converges to a centered Gaussian distribution. 
\begin{figure}[ht]
\hspace{-.5cm}
\begin{center}
  \includegraphics[width=.9\textwidth]{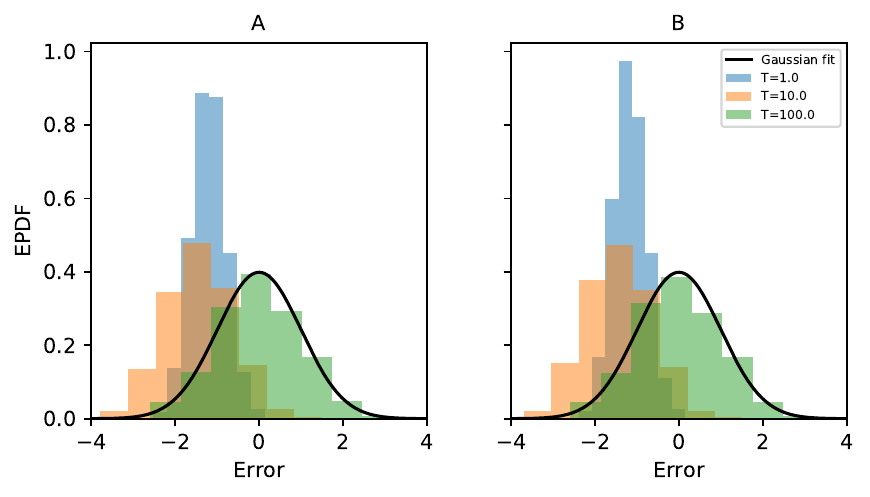}
\end{center}
\caption{Empirical probability distribution (EPDF) of the rescaled residual error at different times $T=K \deltat$ in the case of the Bayesian logistic regression posterior sampling problem. Panel A shows the EPDF of the rescaled residual error of the estimated mean of the $65$-th regression variable. Panel B shows the residual error of the estimated variance of the same regression variable}\label{fig:mnist:clt}
\end{figure}


In a second set of simulations we investigate the effect of different values of the thermal mass~$\nu$ on the convergence speed of the estimates of expectations of certain observables obtained from single trajectories. We consider the same setup as described above but generate single trajectories for different values of the coupling parameter, i.e., $\nu  = \varepsilon^2 \in \{1,10,100\}$. As observables we consider again the projection onto a single coordinate variable, and the average likelihood over the test set --a quantity commonly used for benchmarking purposes in machine learning applications, i.e., 
\[
\varphi(\q,\p,\xi) = \frac{1}{\ndatatest}\sum_{i=1}^{\ndatatest} p ( y^{j}, x^{j}\given \q) =  \frac{1}{\ndatatest}\sum_{i=1}^{\ndatatest}\frac{ \exp \left ( y^j (x^j)^T \q \right )}{1 + \exp \left ( (x^j)^T \q\right ) },
\]
where $\ndatatest=2000$, and $(x^{i},y^{i}), i =1,\dots,\ndatatest$ are the data points of the test data set.

Figure~\ref{fig:mnist:params} shows the time evolution of the corresponding Monte Carlo estimates of the mean of the 65-th component (Panel A), and the average likelihood over the test set (Panel B).  As one may have anticipated based on the asymptotic scaling of the spectral gap as ${\rm O}(\nu^{-1})$ as $\nu \rightarrow \infty$, the convergence of the respective cumulative averages (in time) of the observables under consideration becomes slower with increasing values of $\nu$. We mention that estimates appear to converge to different values in the limit $T= \deltat K\rightarrow \infty$. This observation can be explained by the fact that the invariant measure of the discretized dynamics can be expected to depend on the value of the coupling parameter $\nu$. We refer to \cite{Leimkuhler2015} for a detailed analysis of this dependency in the case of the similar BADODAB splitting scheme. A reduction of this discrepancy can be achieved by a reduction of the stepsize $\deltat$.
\begin{figure}[ht]
  \hspace{-.5cm}
  \begin{center}
\includegraphics[width=.9\textwidth]{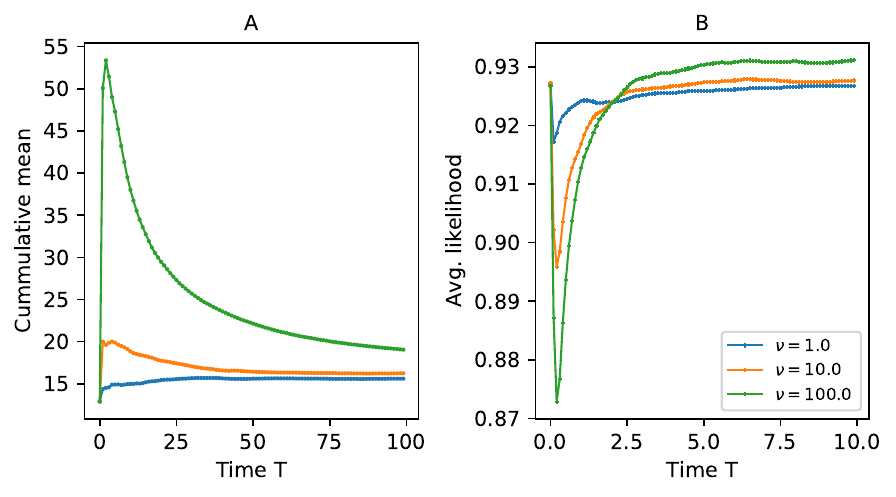}
\end{center}
\caption{Number of time steps $K$ vs. value of the Monte Carlo estimate of the mean of the $65$-th regression variable (Panel~A), and the value of the estimated average likelihood over the test (Panel~B).}\label{fig:mnist:params}
\end{figure}

\FloatBarrier

\



\section*{Acknowledgments}
The authors thank Laurent Michel and Lois Delande for pointing out a mistake in an earlier version of Lemma~\ref{lem:bound_AA_Ace}. This work was initiated during the authors' stay at the Institut Henri Poincar\'e - Centre Emile Borel during the trimester ``Stochastic Dynamics Out of Equilibrium'' (April-July 2017). The authors warmly thank this institution for its hospitality. The research of B. Leimkuhler was supported by the ERC project RULE (grant number 320823) and  EPSRC grant EP/P006175/1. The work of M. Sachs was supported by the National Science Foundation under grant DMS-1638521 to the Statistical and Applied Mathematical Sciences Institute (SAMSI), North Carolina. The activity of Gabriel Stoltz was funded in part by the Agence Nationale de la Recherche, under grant ANR-14-CE23-0012 (COSMOS), and by the European Research Council under the European Union’s Seventh Framework Programme (FP/2007-2013)/ERC Grant Agreement number 614492. G.S. also benefited from the scientific environment of the Laboratoire International Associ\'e between the Centre National de la Recherche Scientifique and the University of Illinois at Urbana-Champaign.

\bibliographystyle{siamplain}
\bibliography{matthias_refs.bib}

\appendix
\newpage
\section*{Appendix}
This appendix contains additional details on the numerical experiments presented in Section \ref{sec:numerical} of the main text. In Section \ref{sec:ap:stiffness:matrix} we present details on the derivation of the stiffness matrix of the generator of adaptive Langevin dynamics in the considered Galerkin subspace. Section \ref{sec:add:numexp} contains complementary numerical experiments which demonstrate the predicted asymptotic scaling of the asymptotic variance of some observables as $1/\gamma$ in the underdamped limit. Section \ref{sec:num:int}  details the numerical integrators used to obtain the results presented in Sections \ref{sec:num:asvar:scaling} and \ref{sec:data} of the main text.

\section{Derivation of the stiffness matrix in the Hermite Galerkin projection}\label{sec:ap:stiffness:matrix}
In this section we outline the derivation of the stiffness matrix ${\bm A} = \gamma {\bm A}_{\rm OU}+\varepsilon^{-1}{\bm A}_{{\rm NH}} + {\bm A}_{{\rm H}}$, where $ {\bm A}_{\rm OU}$, ${\bm A}_{{\rm NH}}$, and ${\bm A}_{{\rm H}}$ denote the stiffness matrices associated with the generators $\Lo, \,\Lnh$, and $\Lh$, respectively.
Let $\hermite_{l}$ denote the $l$-th Hermite Polynomial, i.e., 
\begin{equation}\label{eq:def:hermite}
\hermite_{l}(x) = \frac{1}{\sqrt{l!}} \widetilde{H}_{l} \left ( \sqrt{\beta} x \right ),~ \widetilde{H}_{l}(x) = (-1)^{l} {\rm e}^{x^{2}/2} \frac{\dd^{l}}{\dd x^{l}} \left ( {\rm e}^{-x^{2}/2} \right ).
\end{equation}
Simple computations show 
\begin{equation}\label{eq:diff:action:hermite}
\partial_{x} \hermite_{l}(x)  =  \sqrt{\beta l} \hermite_{l-1}(x), ~~ \partial_{x}^{*} \hermite_{l}(x)  = \sqrt{\beta (l+1)} \hermite_{l+1}(x),
\end{equation}
where $\partial_{x}^{*}$ denotes the adjoint of $\partial_{x}$ in $L^{2}({\rm e}^{-(\beta/2) x^{2}}\dd x)$.
Rewriting the generators $\Lo, \Lnh$ and $\Lh$ in terms of the operators $\partial_{p},\partial_{p}^{*},\partial_{\xi},\partial_{\xi}^{*},\partial_{q} $ and $\partial_{q}^{*}$ (see~\eqref{eq:def_Lh_Lo} and~\eqref{eq:def_Lnh_rescaled}), and using \eqref{eq:diff:action:hermite} 
we find
\begin{align}\label{eq:Lo:action}
\Lo \psi_{k,l,m} &= -k\psi_{k,l,m},\\
\begin{split}
\Lnh \psi_{k,l,m} 
&=\beta^{-1/2} \Big (k \sqrt{l} \psi_{k,l-1,m}  + \sqrt{(k+1)(k+2)l} \psi_{k+2,l-1,m} \\ 
&\quad \qquad \qquad - k \sqrt{l+1} \psi_{k,l+1,m} - \sqrt{k(k-1)(l+1)} \psi_{k-2,l+1,m} \Big ),\label{eq:Lnh:action}
\end{split}\\
\Lh \psi_{k,l,m} &= \sqrt{m(k+1)}\psi_{k+1,l,m-1}-\sqrt{(m+1)k}\psi_{k-1,l,m+1},\label{eq:Lh:action}
\end{align}
with $\psi_{k,l,m}$ as defined in \eqref{eq:def:hermite:2}. For $i,j\in \NN$, let $E_{i,j}\in \RR^{L^{3}\times L^{3}}$ denote the matrix with entries 
\begin{equation}
E_{i,j} :=
\begin{cases}
[\delta_{i,i^{\prime}}\delta_{j,j^{\prime}}]_{1\leq i^{\prime},j^{\prime} \leq L^{3}}, &\text{ if } 1\leq i,j \leq L^{3}\\
0,& \text{ otherwise}.
\end{cases}
\end{equation}
Then, recalling the definition of the hash function~$I$ given in Section~\ref{sec:spectral_gap_Galerkin} and defining $\tilde{I} : (k,l,m) \mapsto I(k,l,m) \mathbbm{1}_{\{0,1,\dots,L-1\}}(k)\mathbbm{1}_{\{0,1,\dots,L-1\}}(l)\mathbbm{1}_{\{0,1,\dots,L-1\}}(m) $, the stiffness matrices associated with the operators $\Lo,\Lnh$, and $\Lh$ follow from \eqref{eq:Lo:action}--\eqref{eq:Lh:action} as
\begin{align*}
{\bm A}_{\rm OU} &= \sum_{k=0}^{L-1}\sum_{l=0}^{L-1}\sum_{m=0}^{L-1} -k E_{\tilde{I}(k,l,m),\tilde{I}(k,l,m)},\\
\begin{split}
 {\bm A}_{\rm NH} &=  \beta^{-1/2} \sum_{k=0}^{L-1}\sum_{l=0}^{L-1} \sum_{m=0}^{L-1}\Big ( -  k\sqrt{l}E_{\tilde{I}(k,l-1,m),\tilde{I}(k,l,m)} - \sqrt{(k+1)(k+2) l}E_{\tilde{I}(k+2,l-1,m),\tilde{I}(k,l,m)}\\
 &\qquad \qquad\qquad- k\sqrt{l+1}E_{\tilde{I}(k,l+1,m),\tilde{I}(k,l,m)} + \sqrt{k(k-1)(l+1)} E_{\tilde{I}(k-2,l+1,m),\tilde{I}(k,l,m)} \Big),
\end{split}\\
{\bm A}_{\rm H}  &= \sum_{k=0}^{L-1}\sum_{l=0}^{L-1}\sum_{m=0}^{L-1}  \left (\sqrt{m(k+1)} E_{\tilde{I}(k+1,l,m-1),\tilde{I}(k,l,m)} -\sqrt{(m+1)k}E_{\tilde{I}(k-1,l,m+1),\tilde{I}(k,l,m)}\right),
\end{align*}
respectively.

\section{Additional numerical experiment}\label{sec:add:numexp}
In order to demonstrate the predicted behaviour of the asymptotic variance as $\gamma\rightarrow 0$, we consider the setup described in Section~\ref{sec:num:asvar:scaling} with the modified parametrization $a=1,\; b=4,\; c=1/2$ of the potential function $U(\q) = \frac{b}{a} \left (\q^{2}-a\right)^{2}+ c\, \q$. This change of parameterization results in an increased barrier height between the two local minima of the potential function. Using a stepsize of $\deltat=10^{-1}$, we obtain estimates of the asymptotic variance for certain observables by following the same procedure and using the same number of replicas and iterations as in the numerical experiment described in Section~\ref{sec:num:asvar:scaling}. Fig~\ref{fig:as:var:2} shows the estimated asymptotic variance as a function of the friction coefficient $\gamma$. The value of the coupling parameter, $\varepsilon$, was set to $1$ in all runs. As predicted, we find that for certain observables the asymptotic variance increases  linearly in $\gamma^{-1}$ as $\gamma\rightarrow 0$.

\begin{figure}[ht]
\begin{centering}
\hspace{-.5cm}
\includegraphics[width=.45\textwidth]{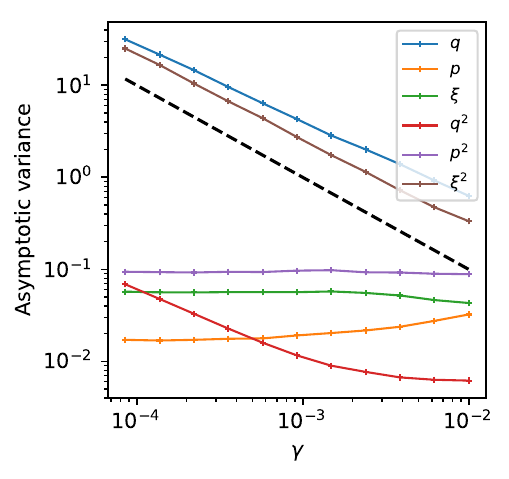}
\caption{Estimated asymptotic variance $\widehat{\sigma}^{2}_{\varepsilon,\gamma}(K)$ for various observables with fixed $\varepsilon=1$  as a function of $\gamma$. The dashed line corresponds to the slope of a linear function in $\gamma^{-1}$.}\label{fig:as:var:2}
\end{centering}
\end{figure}

\section{Numerical integrators}\label{sec:num:int}
In this section we briefly describe the construction of the numerical integrators for the SDEs \eqref{eq:adL} and \eqref{eq:adL:1}, respectively, which we use in the numerical experiments described in Section~\ref{sec:numerical}. We construct these integrators as Strang splittings based the decomposition of the generator into elementary pieces. 
\subsection{Numerical integrator for \eqref{eq:adL}} 
Denote by
\[
\La = \p \cdot \nabla_{\q},~~ \Lb=-\nabla U(\q) \cdot \nabla_{\p}, ~~\Ld = \frac{1}{\varepsilon}\left ( |\p|^2 - \frac{n}{\beta} \right ) \partial_{\xi}
\] 
the Liouville operators associated with the differential equations
\begin{equation}
\dot{\q} =\p,~~\dot{\p} = -\nabla U(\q),~ ~\dot{\xi} = \frac{1}{\varepsilon}\left ( |\p|^2 - \frac{n}{\beta} \right )
\end{equation}
respectively. Moreover, denote by $\widetilde{\Lo} = - \left (\gamma+\frac{\xi}{\varepsilon} \right ) \p \cdot \nabla_{\p} + \frac{\gamma}{\beta} \Delta_{\p}$ the generator associated with the differential equation
\begin{equation}
\dot{\p} = - \left (\gamma+\frac{\xi}{\varepsilon} \right) \p + \sqrt{\frac{2\gamma}{\beta}} \dot{\W}.
\end{equation}
Define coefficients  
\begin{equation}\label{eq:coeff}
\alpha(\zeta,\deltat) :=  {\rm e}^{-\deltat \zeta} ,~~G(\sigma,\zeta, \deltat )  := 
\begin{cases}
\sigma \sqrt{(1-{\rm e}^{-2\deltat\zeta})/(2\zeta)},& \text{ if } \zeta \neq 0,\\
\sigma\sqrt{\deltat},& \text{ if} \zeta = 0, 
\end{cases}
\end{equation}
so that the stochastic update
\begin{equation}
\p_{k+1} = \alpha(\zeta,\deltat)\p_{k} + G(\sigma,\zeta, \deltat ) \mathcal{R}_{k}, \quad \mathcal{R}_{k} \sim \mathcal{N}(\0,\I_{n}),
\end{equation}
is equivalent to evolving the SDE $\dot{\p} = - \zeta \p + \sigma\dot{\W}, ~\zeta,\sigma \in \RR$ for time $\deltat \geq 0$.\\

Consider the numerical method
\begin{align*}
\p_{k+1/2} &= \p_{k} - \frac{\deltat}{2}\nabla U(\q_{k}),\\
\q_{k+1/2} &= \q_{k} +\frac{\deltat}{2}\p_{k+1/2},\\
\xi_{k+1/2} &= \xi_{k} +\frac{\deltat}{2\varepsilon}\left ( |\p_{k+1/2}|^2 - \frac{n}{\beta} \right ),\\
 \hat{\p}_{k+1/2}&=  \alpha \left (\varepsilon^{-1}\xi_{k+1/2} + \gamma,\deltat \right )\p_{k+1/2} +
 G \left ( \sqrt{\frac{2\gamma}{\beta}} ,\varepsilon^{-1}\xi_{k+1/2} + \gamma, \deltat \right )\mathcal{R}_{k}, ~  \mathcal{R}_{k} \sim \mathcal{N}(\0,\I_{n}),\\
\xi_{k+1} &= \xi_{k+1/2} +\frac{\deltat}{2\varepsilon}\left ( |\hat{\p}_{k+1/2}|^2 - \frac{n}{\beta} \right ),\\
\q_{k+1} &= \q_{k+1/2} +\frac{\deltat}{2}\hat{\p}_{k+1/2},\\
\p_{k+1} &= \hat{\p}_{k+1/2} - \frac{\deltat}{2}\nabla U(\q_{k+1}).\\
\end{align*}
This corresponding to a symmetric splitting the propagator of the SDE \eqref{eq:adL}:
\begin{equation}
{\rm e}^{\deltat \Ladl} = 
{\rm e}^{\frac{\deltat}{2} \Lb } 
{\rm e}^{\frac{\deltat}{2} \La }
{\rm e}^{\frac{\deltat}{2} \Ld }
{\rm e}^{\deltat \widetilde{\Lo} }
{\rm e}^{\frac{\deltat}{2} \Ld }
{\rm e}^{\frac{\deltat}{2} \La }
{\rm e}^{\frac{\deltat}{2} \Lb } 
+ {\rm O}(\deltat^{3}),
\end{equation}
in accordance with the naming in \cite{Leimkuhler2015}. We refer to this as the BADODAB scheme.

\subsection{Numerical integrator for \eqref{eq:adL:1}}
While the above BADODAB integration scheme can be adapted to the setup of \eqref{eq:adL:1}, the resulting numerical scheme does not correspond to a splitting of propagator of the respective SDE in the presence of a gradient noise. In particular the weak convergence order of that integrator can only be expected to be of order 1 (this is in comparison to a weak convergence of order 2 in the absence of a gradient noise). In what follows we briefly describe an integrator for the SDE  \eqref{eq:adL:1} which in the presence of a gradient noise still corresponds to a symmetric splitting of the associated propagator, which means that the weak error as well as the error in ergodic averages decay at least quadratically as $\deltat \rightarrow 0$. 

Let the operator $\La$ be as defined above. Denote by  
\begin{equation}
\widetilde{\Ld} =  \frac{1}{\nu}\left ( |\p|^2 - \frac{n}{\beta} \right ) \partial_{\zeta}
\end{equation}
the Liouville operator associated with the differential equation 
\begin{equation}
\dd \zeta = \frac{1}{\nu}\left ( |\p|^2- \frac{n}{\beta}  \right ) \dd t,
\end{equation}
and denote by
\begin{equation}
\widetilde{\Lo} = -\zeta \p \cdot \nabla_{p} + \frac{\sigma_{A}^{2}}{2} \Delta_{p},
\end{equation}
the generator of the SDE 
\begin{equation}
\dd \p = - \zeta \p \dd t +  \sigma_{\rm A} \, \dd \W_{\rm A}.
\end{equation}

If the exact gradient force is replaced by an unbiased estimator $-\widehat{\nabla} U(\q_{k})$, then, under the assumption that the residual error $\mathcal{R}_{{\rm G},k} =\nabla U(\q_{k}) -\widehat{\nabla} U(\q_{k})$ is Gaussian, and independent of the value of $\q_{k}$, i.e., $\mathcal{R}_{{\rm G},k} \sim \mathcal{N}(\0, \widetilde{\sigma}_{\rm G}^{2})$, where $\widetilde{\sigma}_{\rm G}^{2} = \textrm{var}(  \widehat{\nabla} U(\q))$, an Euler update of the form $\p_{k+1} = \p_{k} - \deltat \widehat{\nabla} U(\q_{k})$ can be viewed as an exact solution of the SDE
\begin{equation}
\dd \p =  -\nabla U(\q) \dd t  + \sqrt{\deltat}\, \widetilde{\sigma}_{\rm G} \, \dd \W_{\rm G},
\end{equation}
with associated generator 
\begin{equation}
\widetilde{\Lb} = -\nabla U(\q) \cdot \nabla_p + \deltat \frac{\sigma_{G}^{2}}{2} \Delta_{p}.
\end{equation}
Let 
\begin{align*}
\p_{k+1/2}&= \alpha \left (\zeta_{k},\deltat/2 \right ) \p_{k} + G\left(\sigma_{A},\zeta_{k}, \deltat/2 \right)\mathcal{R}_{k}, \quad \mathcal{R}_{k} \sim \mathcal{N}(\0,\I_{n}),\\
\zeta_{k+1/2} &= \zeta_{k} +\frac{\deltat}{2\nu}\left ( |\p_{k+1/2}|^2 - \frac{n}{\beta} \right ),\\
\q_{k+1/2} &= \q_{k} +\frac{\deltat}{2}\M^{-1}\p_{k+1/2},\\
\hat{\p}_{k+1/2} &= \p_{k+1/2} - \deltat  \widehat{\nabla} U(\q_{k+1/2}),\\
\q_{k+1} &= \q_{k+1/2} +\frac{\deltat}{2}\M^{-1}\hat{\p}_{k+1/2},\\
\zeta_{k+1} &= \zeta_{k+1/2} +\frac{\deltat}{2\nu}\left ( |\hat{\p}_{k+1/2}|^2 - \frac{n}{\beta} \right ),\\
\p_{k+1}&= \alpha \left (\zeta_{k+1},\deltat/2 \right ) \p_{k} + G\left(\sigma_{A},\zeta_{k+1}, \deltat/2 \right)\mathcal{R}_{k+1/2} , \quad \mathcal{R}_{k+1/2} \sim \mathcal{N}(\0,\I_{n}),\\
\end{align*}
with coefficients $\alpha,G$ as defined in \eqref{eq:coeff}.  This corresponds to the following decomposition of the propagator of the SDE \eqref{eq:adL:1}:
\begin{equation}
{\rm e}^{\deltat \Ladl} = 
{\rm e}^{\frac{\deltat}{2} \widetilde{\Lo} }
{\rm e}^{\frac{\deltat}{2} \widetilde{\Ld} }
{\rm e}^{\frac{\deltat}{2} \La }
{\rm e}^{\frac{\deltat}{2} \widetilde{\Lb} } 
{\rm e}^{\frac{\deltat}{2} \La }
{\rm e}^{\frac{\deltat}{2} \widetilde{\Ld} }
{\rm e}^{\frac{\deltat}{2} \widetilde{\Lo} }
+ {\rm O}(\deltat^{3}).
\end{equation}
We refer to this as the ODABADO scheme.
\section*{Acknowledgments} The above discussed ODABADO splitting scheme was previously proposed in 2016 as a second order scheme for noisy gradient systems by Xiaocheng Shang.

\end{document}